\newtheorem{theorem}{Theorem}[section]
\newtheorem{lemma}[theorem]{Lemma}
\newtheorem{proposition}[theorem]{Proposition}
\newtheorem{definition}[theorem]{Definition}
\newtheorem{corollary}[theorem]{Corollary}
\newtheorem{remark}[theorem]{Remark}
\newtheorem{example}[theorem]{Example}
\numberwithin{equation}{section}
\title{Controllability of impulse controlled systems of heat equations coupled by constant matrices }
\author{Shulin Qin\thanks{School of Mathematics and Statistics, Wuhan University, Wuhan, 430072, China (shulinqin@yeah.net).}
\and
Gengsheng Wang\thanks{School of Mathematics and Statistics, Wuhan University, Wuhan, 430072, China (wanggs62@yeah.net).}
}
\begin{document}

 \date{ }
\maketitle

\begin{abstract}
This paper studies the approximate and null controllability for
 impulse controlled systems of heat equations coupled by a pair $(A,B)$ of constant matrices. We present a necessary and sufficient condition for the approximate controllability, which is exactly Kalman's controllability rank condition of $(A,B)$.
  We prove that when such a system is approximately controllable, the approximate controllability over an interval $[0,T]$ can be realized by adding controls at arbitrary $n$ different
  control instants $0<\tau_1<\tau_2<\cdots<\tau_n<T$, provided that $\tau_n-\tau_1<d_A$, where $d_A\triangleq\min\{\pi/|\mbox{Im} \lambda|\;:\; \lambda\in \sigma(A)\}$.
  We also show that in general,   such systems are not null controllable.

\end{abstract}

\noindent\textbf{Keywords} Impulse control,  approximate controllability, null controllability, systems of heat equations\\

\noindent\textbf{Mathematics Subject Classification (2010)} 93B05 35K40

\bigskip

\section{Introduction}

In this paper, we will study the null controllability and the approximate controllability for some impulse controlled systems of heat equations
coupled by constant matrices.
Impulse control belongs to a class of important controls and has wide
applications. In many cases impulse control is an interesting alternative to
deal with systems that cannot be acted on by means of continuous control
inputs, for instance, relevant control for acting on a population of
bacteria should be impulsive, so that the density of the bactericide may
change instantaneously; indeed continuous control would be enhance drug
resistance of bacteria (see \cite{TWZ} and \cite{Yang}).
Another application of impulse control in reality can be explained as follows:
In materials science, quenching is the rapid cooling of a workpiece to obtain certain material properties. A type of heat treating, quenching prevents undesired low-temperature processes, such as phase transformations, from occurring.  In ancient, a sequence of intermittent quenching is widely used in swordsmanship. We can regard such a quenching as an impulse control. Besides, there are many applications of impulse control theory to nanoelectronics (see Chapter 11 in \cite{Yang}).

To introduce  our controlled system, some notations are given in order.
Let $\Omega\subset \mathbb R^N$ (with $N\in\mathbb N^+\triangleq\{1,2,\dots\}$) be a bounded domain with a $C^2$ boundary $\partial\Omega$. Let $\omega\subset\Omega$ be an open and nonempty subset with its characteristic function  $\chi_\omega$. Write $\mathbb R^+ \triangleq (0, +\infty)$. Let $A$ and $B$ be respectively $n\times n$ and $n\times m$ (with $n,m\in \mathbb N^+$) real matrices, which are treated as
linear operators from $\mathbb R^{n}$ and $\mathbb R^{m}$ to $\mathbb R^{n}$  respectively.
Write $\mathbf{\Delta}\triangleq \mbox{diag}\{\Delta,\dots,\Delta\}$ (where there are $n$ Lapalacians). Define
 \begin{equation}\label{heat_1}
   \mathcal A\triangleq\mathbf{\Delta} -A
   \;\;\mbox{with}\;\;
   D(\mathcal A)\triangleq H^2(\Omega; \mathbb R^n) \cap H^1_0(\Omega; \mathbb R^n).
 \end{equation}
 (Namely, for each $\mathbf{z}=(z_1,\dots,z_n)^\top\in  D(\mathcal A)$, with $z_i\in H^2(\Omega; \mathbb R) \cap H^1_0(\Omega; \mathbb R)$, $i=1,\dots,n$, we define $ \mathcal A \mathbf{z}\triangleq \mathbf{\Delta}(z_1,\dots,z_n)^\top -A(z_1,\dots,z_n)^\top$.)
One can easily check that  $\mathcal A$ generates a $C_0$-semigroup $\{e^{\mathcal At}\}_{t\geq 0}$ over $L^2(\Omega; \mathbb R^n)$.
We treat $\chi_\omega$  as a linear and bounded operator on $L^2(\Omega; \mathbb R^n)$ in the following manner: For each $\mathbf{z}=(z_1,\dots,z_n)^\top\in L^2(\Omega; \mathbb R^n)$ (where $z_k\in L^2(\Omega; \mathbb R)$, $k=1,\dots,n$), we define that
$\chi_\omega \mathbf{z}\triangleq(\chi_\omega z_1,\dots,\chi_\omega z_n)^\top$.

 Consider the following  impulse controlled system of heat equations:
\begin{eqnarray}\label{heat-eq-ip}
\left\{\begin{array}{lll}
        \partial_t \mathbf{y}(t)- \mathcal A\mathbf{y}(t) = 0,  &  & t\in \mathbb R^+\setminus\{\tau_k\}_{k=1}^{p},\\
        \mathbf{y}(\tau_k) - \mathbf{y}(\tau_k-) = \chi_{\omega}B\mathbf{u}_k, &\;& k = 1,2,\ldots,p,\\
               \mathbf{y}(0)=\mathbf{y}_0\in L^2(\Omega; \mathbb R^n).
       \end{array}
\right.
\end{eqnarray}
Here,  $p \in \mathbb N^+$;  $0<\tau_1<\cdots<\tau_p<\infty$, which are called control instants;
 $\mathbf{u}_k=(u_{k1},\dots,u_{km})$, $k=1,\dots,p$, are  taken from $L^2(\Omega; \mathbb R^m)$
and  called impulse controls;  $\mathbf{y}(\tau_k-)$ denotes the left limit at $t=\tau_k$ for
the function $\mathbf{y}$. One can easily check that  the equation (\ref{heat-eq-ip}) is well-posed.
Write $\mathbf{y}(\cdot; \mathbf{y}_0,\{\tau_k\}_{k=1}^{p}, \{\mathbf{u}_k\}_{k=1}^{p} )$
   for the unique solution of (\ref{heat-eq-ip}). It is clear that
\begin{eqnarray}\label{solution-formula}
\mathbf{y}(t; \mathbf{y}_0, \{\tau_k\}_{k=1}^p, \{\mathbf{u}_k\}_{k=1}^p) = e^{\mathcal At}\mathbf{y}_0 + \sum_{1\leq k\leq p,\,\tau_k\leq t} e^{\mathcal A(t-\tau_k)}\chi_{\omega}B\mathbf{u}_k,
~ t\geq 0.
\end{eqnarray}

Throughout this paper, $\|\cdot\|$ and $\langle\cdot,\cdot\rangle$ denote the usual norm and inner product of $L^2(\Omega; \mathbb R^n)$, respectively; For each $C \in \mathbb R^{n\times n}$, we define
\begin{equation}\label{th-o-1-1}
  d_C \triangleq \min\Big\{ \pi / |\mbox{Im}\lambda| \;:\; \lambda \in \sigma(C)\Big\},
\end{equation}
where $\sigma(C)$ denotes the spectrum of $C$ and in the above definition we agree that $\frac{1}{0} = +\infty$. (We mention that $d_C$ takes value $+\infty$ in the case that $C$ has only real eigenvalues);
$A^*$, $B^*$ and $\mathcal A^*$ stand for the adjoint operators of $A$, $B$ and $\mathcal A$, respectively.

\begin{definition}\label{Def-2}
(i) Let $T>0$. The  system (\ref{heat-eq-ip}) is said to be  null controllable over $[0,T]$, if there is $p \in \mathbb N^+$, $\{\tau_k\}_{k=1}^{p} \subset (0, T)$ (with $\tau_1<\cdots<\tau_p$) so that
 for each  $\mathbf{y}_0 \in L^2(\Omega; \mathbb R^n)$, there is $\{\mathbf{u}_k\}_{k=1}^{p} \subset L^2(\Omega; \mathbb R^m)$   satisfying that
\begin{equation}\label{Def-2-1}
  \mathbf{y}(T; \mathbf{y}_0, \{\tau_k\}_{k=1}^{p}, \{\mathbf{u}_k\}_{k=1}^{p}) = 0  \;\;\mbox{in}\;\;  L^2(\Omega; \mathbb R^n).
\end{equation}
(ii) The  system (\ref{heat-eq-ip}) is said to be null controllable, if for each $T>0$, it is null controllable over $[0,T]$.
\end{definition}

 \begin{definition}\label{Def-3}
   (i) Let $T>0$. The  system (\ref{heat-eq-ip}) is said to be approximately controllable over $[0,T]$, if there is $p \in \mathbb N^+$, $\{\tau_k\}_{k=1}^{p} \subset (0, T)$ (with $\tau_1<\cdots<\tau_p$) so that for any $ \varepsilon>0$ and any
   $\mathbf{y}_0, \mathbf{y}_1 \in L^2(\Omega; \mathbb R^n)$, there is $\{\mathbf{u}_k\}_{k=1}^{p} \subset L^2(\Omega; \mathbb R^m)$ satisfying that
\begin{equation}\label{Def-3-1}
  \|\mathbf{y}(T; \mathbf{y}_0, \{\tau_k\}_{k=1}^{p}, \{\mathbf{u}_k\}_{k=1}^{p}) - \mathbf{y}_1\| \leq \varepsilon.
\end{equation}
(ii) The  system (\ref{heat-eq-ip}) is said to be approximately controllable, if for each $T>0$, it is
approximately controllable over $[0,T]$.

\noindent
(iii) We say that the approximate controllability of (\ref{heat-eq-ip}) over $[0,T]$ (with
 $T>0$) can be  realized at  $\{\tau_k\}_{k=1}^p$ (with $p\in\mathbb N^+$ and $0<\tau_1<\cdots<\tau_p<T$), if for any $\varepsilon>0$ and any
   $\mathbf{y}_0, \mathbf{y}_1 \in L^2(\Omega; \mathbb R^n)$, there is $\{\mathbf{u}_k\}_{k=1}^{p} \subset L^2(\Omega; \mathbb R^m)$ satisfying that
\begin{equation}\label{Def-3-2}
  \|\mathbf{y}(T; \mathbf{y}_0, \{\tau_k\}_{k=1}^{p}, \{\mathbf{u}_k\}_{k=1}^{p}) - \mathbf{y}_1\| \leq \varepsilon.
\end{equation}
\end{definition}
Recall that Kalman's controllability rank condition for a  pair $(A,B)$ (in $\mathbb{R}^{n\times n}\times \mathbb{R}^{n\times m}$) is as follows:
\begin{equation}\label{th-1-1}
  \mbox {rank}\;(B, AB, A^2B, \ldots, A^{n-1}B) = n.
\end{equation}

The main results of this paper are presented by the following two  theorems.
 The first one concerns with the null controllability for the system (\ref{heat-eq-ip}),
 while the second one is about the approximate controllability for the system (\ref{heat-eq-ip}).

\begin{theorem}\label{th-1}
 The following  conclusions are true:

(i) When $\Omega \setminus \overline{\omega} \neq \emptyset$ (where $\overline{\omega}$
is the closure of $\omega$ in $\mathbb{R}^N$),   the system (\ref{heat-eq-ip}) is not null controllable over $[0,T]$ for any $T>0$.

(ii) When $\omega = \Omega$,  the system (\ref{heat-eq-ip}) is null controllable if and only if $(A, B)$ satisfies Kalman's controllability rank condition (\ref{th-1-1}).
\end{theorem}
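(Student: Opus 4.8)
The plan is to handle parts (i) and (ii) by separate arguments, both built on the expansion of $L^2(\Omega;\mathbb R^n)$ in an orthonormal basis $\{e_j\}_{j\ge1}$ of $L^2(\Omega;\mathbb R)$ made of Dirichlet eigenfunctions of $-\Delta$ (eigenvalues $0<\lambda_1\le\lambda_2\le\cdots$): since $A$ commutes with $\mathbf{\Delta}$, one has $e^{\mathcal At}(\sum_j\mathbf{z}_je_j)=\sum_j e^{-\lambda_jt}e^{-At}\mathbf{z}_j\,e_j$ for $\mathbf{z}_j\in\mathbb R^n$. For part (i) I would argue by contradiction: if (\ref{heat-eq-ip}) were null controllable over $[0,T]$ at instants $0<\tau_1<\cdots<\tau_p<T$, then for every $\mathbf{y}_0$ there are controls $\{\mathbf{u}_k\}$ with $\mathbf{y}(T)=0$. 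I use two facts: (a) $e^{\mathcal As}$ is injective for $s>0$, which is immediate from the modal formula since each $e^{-\lambda_js}e^{-As}$ is invertible; and (b) for $s>0$ the function $e^{\mathcal As}\psi=e^{-As}e^{\mathbf{\Delta}s}\psi$ is real-analytic in the (open, connected) set $\Omega$, by componentwise interior analyticity of the Dirichlet heat semigroup. From $\mathbf{y}(T)=e^{\mathcal A(T-\tau_p)}\mathbf{y}(\tau_p)=0$ and (a) we get $\mathbf{y}(\tau_p)=0$, hence $\mathbf{y}(\tau_p-)=-\chi_\omega B\mathbf{u}_p$ vanishes a.e.\ on the nonempty open set $\Omega\setminus\overline{\omega}$; but $\mathbf{y}(\tau_p-)=e^{\mathcal A(\tau_p-\tau_{p-1})}\mathbf{y}(\tau_{p-1})$ (with $\tau_0:=0$, $\mathbf{y}(\tau_0):=\mathbf{y}_0$) is analytic on $\Omega$, so it vanishes identically and (a) forces $\mathbf{y}(\tau_{p-1})=0$. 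Iterating this down the instants yields $e^{\mathcal A\tau_1}\mathbf{y}_0=0$, hence $\mathbf{y}_0=0$, contradicting the arbitrariness of $\mathbf{y}_0$. Thus (\ref{heat-eq-ip}) is not null controllable over $[0,T]$ for any $T>0$.

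For part (ii), $\omega=\Omega$ makes $\chi_\omega$ the identity, and by (\ref{solution-formula}) null controllability over $[0,T]$ at $\{\tau_k\}_{k=1}^p$ is equivalent to solving, for each $\mathbf{y}_0$, the equation $\sum_{k=1}^p e^{\mathcal A(T-\tau_k)}B\mathbf{u}_k=-e^{\mathcal AT}\mathbf{y}_0$ in $L^2(\Omega;\mathbb R^m)$. Projecting onto mode $j$ and cancelling the invertible factors $e^{-\lambda_jT}$ and $e^{-AT}$, this becomes the finite-dimensional system $\sum_{k=1}^p e^{\lambda_j\tau_k}e^{A\tau_k}B(\mathbf{u}_k)_j=-(\mathbf{y}_0)_j$ in $\mathbb R^n$. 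For the necessity of (\ref{th-1-1}): if it fails, then $\mathcal K:=\mathrm{Range}(B,AB,\dots,A^{n-1}B)$ is a proper subspace of $\mathbb R^n$ which is $A$-invariant (hence invariant under every $e^{\pm At}$) and contains $\mathrm{Range}(B)$, so the left-hand side of the modal system always lies in $\mathcal K$; choosing $\mathbf{y}_0=v\,e_1$ with $v\notin\mathcal K$ then makes (\ref{Def-2-1}) unsolvable for every $p$ and every $\{\tau_k\}$, so (\ref{heat-eq-ip}) is not null controllable.

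Conversely, assume (\ref{th-1-1}) and fix $T>0$. With $d$ the degree of the minimal polynomial of $A$, the $\mathbb R$-span of $\{e^{A\tau}:\tau\in(0,T)\}$ equals $\mathrm{span}\{I,A,\dots,A^{d-1}\}$ (the inclusion $\subseteq$ since $e^{A\tau}$ is a polynomial in $A$ of degree $<d$ by Cayley--Hamilton; the reverse since each $A^l$ is a derivative of $e^{A\tau}$ at $0$, and analyticity lets one realize the span using $\tau$ in any interval). So one may pick distinct $\tau_1,\dots,\tau_d\in(0,T)$ for which $\{e^{A\tau_k}\}_{k=1}^d$ is a basis of $\mathrm{span}\{I,A,\dots,A^{d-1}\}$; take $p=d$. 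Each $A^l$ ($l<d$) being a linear combination of the $e^{A\tau_k}$, we get $A^l\,\mathrm{Range}(B)\subseteq\mathcal R:=\sum_{k=1}^d e^{A\tau_k}\mathrm{Range}(B)$, whence $\mathcal R=\sum_{l<d}A^l\mathrm{Range}(B)=\mathbb R^n$ by (\ref{th-1-1}) and Cayley--Hamilton; equivalently $G:=(e^{A\tau_1}B\mid\cdots\mid e^{A\tau_d}B)$ is surjective onto $\mathbb R^n$, with a right inverse $G^+$ of finite norm. Setting $(\mathbf{u}_k)_j:=e^{-\lambda_j\tau_k}[\,G^+(-(\mathbf{y}_0)_j)\,]_k$ solves the modal equations, and since $e^{-\lambda_j\tau_k}\le e^{-\lambda_1\tau_k}$ we get $\sum_j|(\mathbf{u}_k)_j|^2\le\|G^+\|^2e^{-2\lambda_1\tau_k}\|\mathbf{y}_0\|^2<\infty$, so $\mathbf{u}_k:=\sum_j(\mathbf{u}_k)_je_j\in L^2(\Omega;\mathbb R^m)$ and $\mathbf{y}(T;\mathbf{y}_0,\{\tau_k\}_{k=1}^d,\{\mathbf{u}_k\}_{k=1}^d)=0$. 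Hence (\ref{heat-eq-ip}) is null controllable over every $[0,T]$, which together with the previous paragraph proves (ii).

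The step I expect to be the main obstacle is fact (b) of part (i): the spatial (interior) analyticity of $e^{\mathcal As}\psi$ for $s>0$, which is exactly what lets a vanishing on $\Omega\setminus\overline{\omega}$ propagate to all of $\Omega$; this needs a careful statement or citation of interior analyticity for the Dirichlet heat semigroup (or a unique-continuation substitute), combined with the factorization $e^{\mathcal As}=e^{-As}e^{\mathbf{\Delta}s}$ that reduces it to the scalar heat equation. The remaining ingredients — the modal decomposition and the finite-dimensional linear algebra in part (ii) — are routine; in particular, and in contrast with the approximate-controllability statement, no constraint such as $\tau_n-\tau_1<d_A$ is required here, since for null controllability the instants may be chosen freely and we only need the matrices $e^{A\tau_1},\dots,e^{A\tau_d}$ to span $\mathrm{span}\{I,A,\dots,A^{d-1}\}$.
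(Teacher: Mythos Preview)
Your proof is correct. Part (i) follows essentially the same backward-iteration scheme as the paper: the paper packages your facts (a) and (b) into a single unique-continuation statement (Proposition~\ref{lm-6}(ii)), proved by citing the one-time unique continuation for scalar heat equations, whereas you separate them into injectivity of the semigroup and interior spatial analyticity. Either way the argument is the same peeling-off of the last impulse.

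Part (ii), however, is handled by genuinely different means. For necessity, the paper argues by duality: it produces (Lemma~\ref{lemmawang3.3}) a nonzero $\hat{\mathbf z}$ with $\chi_\omega B^* e^{\mathcal A^* t}\hat{\mathbf z}=0$ for all $t>0$, and then pairs it against a trajectory to reach a contradiction. Your argument is more direct: the $A$-invariant proper subspace $\mathcal K=\mathrm{Range}(B,\dots,A^{n-1}B)$ traps the modal left-hand side, so a single initial datum $v\,e_1$ with $v\notin\mathcal K$ obstructs controllability for every choice of instants. For sufficiency, the paper invokes Theorem~\ref{th-o-1} (which needs the constraint $\tau_n-\tau_1<d_A$) to obtain $\mathrm{rank}(e^{A\tau_1}B,\dots,e^{A\tau_n}B)=n$, and then builds the controls through Lemmas~\ref{lm-8}--\ref{lm-9} and Propositions~\ref{lm-5}--\ref{lm-6}. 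You instead observe that $\mathrm{span}\{e^{A\tau}:\tau\in(0,T)\}=\mathrm{span}\{I,\dots,A^{d-1}\}$ (with $d$ the minimal-polynomial degree), pick $\tau_1,\dots,\tau_d$ so that the $e^{A\tau_k}$ form a basis of this space, and solve each modal equation explicitly via a fixed right inverse $G^+$. This is more elementary, uses only $d\le n$ instants, and---as you note---avoids the $d_A$ constraint entirely, since here the instants are chosen rather than prescribed. The paper's route, on the other hand, feeds directly into the machinery used for approximate controllability (Theorem~\ref{th-2}), where the $d_A$ condition becomes essential.
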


\begin{theorem}\label{th-2}
The following  conclusions are true:

(i)  The system (\ref{heat-eq-ip}) is approximately controllable  if and only if the pair $(A,B)$ satisfies Kalman's controllability rank condition (\ref{th-1-1}).

(ii) Suppose that the pair $(A,B)$ satisfies Kalman's controllability rank condition (\ref{th-1-1}). Then for each $T>0$, the approximate controllability of the system (\ref{heat-eq-ip}) over $[0,T]$ can be realized at any sequence $\{\tau_k\}_{k=1}^n$ with $0 < \tau_1<\cdots<\tau_n < T$ and with $\tau_n-\tau_1<\mbox{d}_A$ (given by (\ref{th-o-1-1}), where $C=A$).
\end{theorem}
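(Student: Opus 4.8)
\medskip
\noindent\textbf{Proof idea.} Both parts reduce to one duality statement. Fix $T>0$ and $0<\tau_1<\cdots<\tau_p<T$ and consider the bounded linear operator
$\mathcal L_{\{\tau_k\}}:(L^2(\Omega;\mathbb R^m))^p\to L^2(\Omega;\mathbb R^n)$,
$\mathcal L_{\{\tau_k\}}(\mathbf u_1,\dots,\mathbf u_p)=\sum_{k=1}^p e^{\mathcal A(T-\tau_k)}\chi_\omega B\mathbf u_k$.
By the solution formula (\ref{solution-formula}), the approximate controllability of (\ref{heat-eq-ip}) over $[0,T]$ can be realized at $\{\tau_k\}_{k=1}^p$ exactly when $\mathcal L_{\{\tau_k\}}$ has dense range, i.e. (using $\chi_\omega^*=\chi_\omega$ and $(e^{\mathcal At})^*=e^{\mathcal A^*t}$) exactly when $\mathcal L_{\{\tau_k\}}^*$ is injective, i.e.
\begin{equation}\label{obs-dual}
B^*\chi_\omega e^{\mathcal A^*(T-\tau_k)}\mathbf z=0 \text{ in } L^2(\Omega;\mathbb R^m)\text{ for all } k=1,\dots,p\ \Longrightarrow\ \mathbf z=0 .
\end{equation}
Thus the whole theorem is about deciding, for a given set of instants, whether the unique continuation property (\ref{obs-dual}) holds.

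For part (ii) I would establish (\ref{obs-dual}) with $p=n$ under Kalman's condition (\ref{th-1-1}) and $\tau_n-\tau_1<d_A$, in four steps. Let $\mathbf z$ satisfy the left side of (\ref{obs-dual}) and put $s_k:=T-\tau_k$, so $s_1>\cdots>s_n>0$ and $s_1-s_n=\tau_n-\tau_1<d_A$. \emph{Step 1 (from $\omega$ to $\Omega$).} Since $\mathbf{\Delta}$ (Dirichlet) and $A^*$ commute, $e^{\mathcal A^*t}=e^{\mathbf{\Delta}t}e^{-A^*t}$, $e^{\mathbf{\Delta}t}$ acting on the space variable and $e^{-A^*t}$ on the vector variable; for $s_k>0$ each scalar component of $e^{\mathbf{\Delta}s_k}\mathbf z$ is the value at a positive time of a solution of the heat equation in $\Omega$, hence is real-analytic on $\Omega$. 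Therefore $B^*e^{-A^*s_k}\bigl(e^{\mathbf{\Delta}s_k}\mathbf z\bigr)$ is a real-analytic $\mathbb R^m$-valued function on the connected open set $\Omega$; vanishing on the nonempty open set $\omega$, it vanishes on all of $\Omega$. \emph{Step 2 (diagonalize in space).} Expanding $\mathbf z=\sum_j\mathbf z_j\phi_j$ in an orthonormal eigenbasis $\{\phi_j\}$ of $-\Delta$ on $L^2(\Omega;\mathbb R)$, with eigenvalues $\lambda_j$, and reading off the $j$-th Fourier coefficient of the identity from Step 1 gives $e^{-\lambda_js_k}B^*e^{-A^*s_k}\mathbf z_j=0$, hence $B^*e^{-A^*s_k}\mathbf z_j=0$ for every $j$ and every $k=1,\dots,n$. \emph{Step 3 (finite-dimensional zero counting).} Fix $j$ and set $g(t):=B^*e^{-A^*t}\mathbf z_j\in\mathbb R^m$. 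By Cayley--Hamilton each entry of $t\mapsto e^{-A^*t}$, hence each component of $g$, lies in the $n$-dimensional space of exponential polynomials spanned by $\{t^re^{-\lambda t}\}$ over $\lambda\in\sigma(A^*)$ with $r$ below the corresponding multiplicity; since $d_{A^*}=d_A$, this space is an extended Chebyshev system on every interval of length $<d_A$, so a nonzero element has at most $n-1$ zeros there. As each component of $g$ vanishes at the $n$ distinct points $s_1,\dots,s_n$ of the interval $[s_n,s_1]$ of length $<d_A$, we conclude $g\equiv0$ on $\mathbb R$. \emph{Step 4 (conclude).} Differentiating $g\equiv0$ at $t=0$ gives $(A^rB)^*\mathbf z_j=B^*(A^*)^r\mathbf z_j=0$ for all $r\ge0$, so $\mathbf z_j$ is orthogonal to $\mathrm{Range}(B,AB,\dots,A^{n-1}B)=\mathbb R^n$ by (\ref{th-1-1}); hence $\mathbf z_j=0$ for every $j$, i.e. $\mathbf z=0$. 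This proves (\ref{obs-dual}), hence part (ii).

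Part (i) then follows quickly. If $(A,B)$ satisfies (\ref{th-1-1}), then for any $T>0$ one may choose $n$ instants in $(0,T)$ with arbitrarily small spread, in particular with $\tau_n-\tau_1<d_A$, and part (ii) yields approximate controllability over $[0,T]$; since $T>0$ is arbitrary, the system is approximately controllable. Conversely, if (\ref{th-1-1}) fails, then $V:=\sum_{r\ge0}\mathrm{Range}(A^rB)$ is a proper $A$-invariant subspace of $\mathbb R^n$ containing $\mathrm{Range}(B)$, so $V^\perp\neq\{0\}$ is $A^*$-invariant and contained in $\ker B^*$; fix $0\neq v\in V^\perp$, whence $B^*e^{-A^*t}v=0$ for all $t$ because $e^{-A^*t}v\in V^\perp$. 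For any $T>0$, any $p$, and any $\{\tau_k\}_{k=1}^p\subset(0,T)$, the nonzero element $\mathbf z:=\phi_1\,v$ ($\phi_1$ a first eigenfunction of $-\Delta$) satisfies $B^*\chi_\omega e^{\mathcal A^*(T-\tau_k)}\mathbf z=e^{-\lambda_1(T-\tau_k)}(\chi_\omega\phi_1)\,B^*e^{-A^*(T-\tau_k)}v=0$ for every $k$, so (\ref{obs-dual}) fails; hence (\ref{heat-eq-ip}) is not approximately controllable over any $[0,T]$, in particular not approximately controllable.

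\noindent\textbf{Main obstacle.} The substantive point is Step 3: that the relevant $n$-dimensional space of exponential polynomials is a Chebyshev system (nonzero members have at most $n-1$ zeros) on every interval of length $<d_A$. This is precisely where the threshold $d_A=\min\{\pi/|\mbox{Im}\,\lambda|:\lambda\in\sigma(A)\}$ of (\ref{th-o-1-1}) enters---it forbids a full half-period of any oscillatory mode from fitting inside $[\tau_1,\tau_n]$---and I would isolate it as a separate lemma, proved by the standard induction that divides by one exponential factor and differentiates (each step lowering the dimension by one while not increasing $\max|\mbox{Im}\,\lambda|$), or quoted from the classical theory of extended Chebyshev systems. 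Everything else---the duality reformulation (\ref{obs-dual}), interior analyticity/unique continuation for the heat semigroup used in Step 1, and Cayley--Hamilton together with the equivalence of (\ref{th-1-1}) with observability of $(A^*,B^*)$---is routine.
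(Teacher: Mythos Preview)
Your proof is correct and follows the same overall architecture as the paper: reduce to the dual unique continuation property, separate the spatial and matrix variables, and settle the finite-dimensional part by a zero-counting lemma for exponential polynomials on intervals shorter than $d_A$. Your ``main obstacle'' (Step~3) is exactly the paper's Lemma~\ref{lo-1}, and the inductive proof you sketch---divide out one exponential factor and differentiate---is precisely what the paper does via the factorization Lemma~\ref{lo-2} combined with repeated Rolle. The paper then packages this as the rank statement $\mathrm{rank}(e^{A\tau_1}B,\dots,e^{A\tau_n}B)=n$ (Theorem~\ref{th-o-1}) and proves separately that this rank condition is equivalent to (\ref{obs-dual}) (Theorem~\ref{lm-4}); you instead run the Chebyshev argument directly on each Fourier coefficient $\mathbf z_j$, which is an equivalent reorganization.

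The one genuine methodological difference is in the spatial step. You pass from $\omega$ to $\Omega$ at the fixed time $s_k$ by interior real-analyticity of $e^{\mathbf\Delta s_k}\mathbf z$ and analytic continuation on the connected domain $\Omega$, then expand in Dirichlet eigenfunctions. The paper instead invokes the one-time unique continuation property for the heat equation (Proposition~\ref{lm-6}, relying on \cite{Lin,PW,PWZ}) to pull the information back to time $0$ and argue pointwise that $\mathbf z(x)\in\bigcap_k\ker\bigl(B^*e^{-A^*(T-\tau_k)}\bigr)$. Your route is more self-contained (analytic regularity of heat solutions is classical and elementary), whereas the paper's route avoids the eigenfunction expansion and yields the cleaner intermediate statement that the pointwise rank condition on $(e^{A\tau_k}B)_k$ is \emph{equivalent} to (\ref{obs-dual}). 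For the necessity in (i), your choice $\mathbf z=\phi_1 v$ and the paper's constant function $\mathbf z_\alpha\equiv\alpha$ (Lemma~\ref{lemmawang3.3}) are interchangeable.
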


\bigskip
Several notes  are given in order.
\begin{itemize}
     \item[(a)] From Theorem \ref{th-1}, we see that the system (\ref{heat-eq-ip})
     does not hold  the null controllability
     except for the  special case when the control region $\omega$ is the whole physical domain $\Omega$. Thus, for the system (\ref{heat-eq-ip}), the approximate controllability is the most likely outcome for us. Fortunately, Theorem \ref{th-2}
     provides a criterion on the approximate controllability for (\ref{heat-eq-ip}). It is exactly  Kalman's controllability rank condition (\ref{th-1-1}).

     For single impulse controlled heat equation, i.e., $n=1$, the approximate controllability can be easily obtained by the qualitative unique continuation at one time point for heat equations (see, for instance, \cite{Lin} for such unique continuation). Moreover, in this case, the approximate controllability can be realized at only one control instant. In \cite{PW1} and \cite{PWZ} (see also
         \cite{AWZ}, \cite{ENZhang} and  \cite{PW}), a quantitative version for such unique continuation was built up. Such  a quantitative version leads to not only the approximate
         controllability but also the approximate null controllability with a cost (see \cite{PW2}).

         For the impulse controlled system (\ref{heat-eq-ip}), we have not found any result on the controllability in past publications.

     \item[(b)]
        The exact controllability was studied in \cite{LXZ} (see also \cite{Yang})
        for  the following impulse controlled linear time-invariant ODE:
         \begin{eqnarray}\label{od-1}
\left\{\begin{array}{llll}
        \frac{d}{dt}z = Az,  &  & t\in \mathbb R^+\setminus\{\tau_k\}_{k=1}^{p}, \\
        z(\tau_k) =z(\tau_k-) + Bu_k, &\;& k = 1,2,\ldots,p,\\
       \end{array}
\right.
\end{eqnarray}
where  $A\in \mathbb{R}^{n\times n}$, $B\in \mathbb{R}^{n\times m}$, $p \in \mathbb N^+$, $\{u_k\}_{k=1}^p\subset \mathbb R^m$ and $\{\tau_k\}_{k=1}^p \subset \mathbb R^+$  is an increasing sequence.
  Let us first recall the following definition of the exact controllability for this system (see   \cite[Definition 2.3.1]{Yang}):  {\it  For each $T>0$ and each $z_0, z_1 \in \mathbb R^n$, there exists $p\in\mathbb N^+$, $\{\tau_k\}_{k=1}^p$ and $\{u_k\}_{k=1}^p$ so that the  corresponding  solution of (\ref{od-1}) drives  $z_0$ at
         $t=0$ to $z_1$ at $t=T$. We say that the exact controllability for (\ref{od-1})
         over $[0,T]$ can be realized at $\{\tau_k\}_{k=1}^p\subset (0,T)$, if for any $z_0, z_1 \in \mathbb R^n$, there exists  $\{u_k\}_{k=1}^p$ so that the
         corresponding  solution of (\ref{od-1}) drives  $z_0$ at
         $t=0$ to $z_1$ at $t=T$. } It was obtained in  \cite[Theorem 1]{LXZ}
          (see also \cite[Theorem 2.3.2]{Yang})
         that the pair $(A,B)$ satisfies Kalman's controllability rank condition (\ref{th-1-1}) if and only if the  exact controllability
          holds. Furthermore, the author in \cite{LXZ} claimed that when $(A,B)$ satisfies (\ref{th-1-1}), the number of control instants  can be taken as the smallest  integer which is bigger than or equals to $n/m$. Unfortunately, we do not understand the proof of this part. (More precisely, we do not understand the argument from Lines 8-9 on Page 83 in \cite{LXZ}.)

         From perspective of control instants, the main difference of
         \cite[Theorem 1]{LXZ} from  our result in (ii) of  Theorem \ref{th-2} is as follows: The author in \cite[Theorem 1]{LXZ} only got the existence  of control instants $\{\tau_k\}_{k=1}^p\subset (0,T)$ at which  the exact controllability
         of (\ref{od-1}) over $[0,T]$ can be realized, but did not know positions of  these control instants. In our Theorem \ref{th-2}, the approximate controllability of the system (\ref{heat-eq-ip})  over $[0,T]$ can be realized at any increasing control instants $\{\tau_k\}_{k=1}^n\subset (0,T)$, provided that $\tau_n-\tau_1<d_A$ (which is given by (\ref{th-o-1-1}) with  $C=A$). Moreover,
         we showed, via  Example \ref{e-2}, that for some $(A,B)\in\mathbb R^{2\times 2}\times \mathbb R^{2\times 1}$ with (\ref{th-1-1}), if an increasing sequence        $\{\tau_k\}_{k=1}^2\subset (0,T)$ satisfies that  $\tau_2-\tau_1=d_A$, then  the approximate controllability
         of  the system  (\ref{heat-eq-ip}) over $[0,T]$ cannot be realized at $\{\tau_k\}_{k=1}^2$. Hence, this condition has certain rationality. Unfortunately, we don't know what happen if $\tau_n-\tau_1>d_A$. Though our result
         on control instants improves greatly that in \cite[Theorem 1]{LXZ}, some  idea in our proof of this result is borrowed from the proof of \cite[Theorem 1]{LXZ}.

      \item[(c)] Two interesting questions are as follows: First, given $(A,B)$ with  (\ref{th-1-1}), what is the least number $p$ of control instants in $(0,T)$ so that the approximate controllability for the system (\ref{heat-eq-ip}) over $[0,T]$
        can be realized at $p$ control instants? Second, what can we say about the approximate controllability for the system (\ref{heat-eq-ip}) where elements of $A$ and $B$ are functions of space variable $x\in\Omega$? So far, we are not able to answer these questions.

        For the first question, Example \ref{e-2} shows that for some $(A,B)\in \mathbb{R}^{2\times 2}\times \mathbb{R}^{2\times 1}$, with (\ref{th-1-1}), the approximate controllability for (\ref{heat-eq-ip}) over $[0,T]$, with $T>0$, cannot be realized at a single control instant $\tau\in (0,T)$.

     \item[(d)] There have been many studies on the the approximate controllability, null controllability and the unique continuation for parabolic equations with controls (or observations) on intervals. Here we would like to mention the following publications and the references therein:
\cite{AK1,AK2,BAR,AWZ,Coron, Xuzhang,
FPZ,FC,FZ,FI,BGM,GH,Lin,LX14,ZX16,lvqi,PW,Wang,CZ-1,LP,ZUA}.
 About works on impulsive controlled systems, we would like to mention the references
\cite{Bensoussan,GZ1,GZ2,LBS,LLZ,LXZ,PYF,XG,Yang,Yong,ZS,ZS2,LXZ2} and the references therein.
\end{itemize}

The rest of the paper is organized as follows: Section 2 proves an important property. Section 3 provides some preliminaries. Section 4 proves Theorem \ref{th-1}. Section 5 shows Theorem \ref{th-2}.

\section{Controllability for impulse controlled  ODEs}

In this section, we will study some properties on the exact  controllability of the system (\ref{od-1}).
 Recall the note $(b)$ in Section 1 for the definition of the  exact controllability of (\ref{od-1}) given in \cite[Definition 2.3.1]{Yang}.
Two main theorems will be introduced.

The first main result of this section is the next Theorem~\ref{th-o-1}, which is one of the bases to prove Theorem \ref{th-2}.
\begin{theorem}\label{th-o-1}
 Let  $(A, B)\in \mathbb{R}^{n\times n}\times \mathbb{R}^{n\times m}$ satisfy  (\ref{th-1-1}). Let $\mbox{d}_A$ be given by (\ref{th-o-1-1}) with $C=A$. Then for each increasing sequence $\{\tau_k\}_{k=1}^n \subset \mathbb R$ with $\tau_n-\tau_1<\mbox{d}_A$, it stands that
\begin{equation}\label{th-o-1-2}
  \mbox{rank}\; (e^{A\tau_1}B, \dots, e^{A\tau_n}B) = n.
\end{equation}
\end{theorem}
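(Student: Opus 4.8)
The plan is to argue by contradiction and reduce the rank identity \eqref{th-o-1-2} to a disconjugacy statement for solutions of a scalar constant-coefficient ODE. Suppose $\mathrm{rank}\,(e^{A\tau_1}B,\dots,e^{A\tau_n}B)<n$. Then there is $\eta\in\mathbb R^{n}\setminus\{0\}$ with $\eta^{\top}e^{A\tau_k}B=0$ for $k=1,\dots,n$. Let $b_1,\dots,b_m$ be the columns of $B$ and put $g_l(t):=\eta^{\top}e^{At}b_l$, $l=1,\dots,m$. Since $e^{At}$ is entire, each $g_l$ is real-analytic, and by the Cayley--Hamilton theorem $P(\tfrac{d}{dt})g_l=0$, where $P$ is the characteristic polynomial of $A$; thus $\deg P=n$ and the roots of $P$ are precisely the eigenvalues of $A$, so that $d_A=\pi/\max\{|\mathrm{Im}\,\lambda|:P(\lambda)=0\}$. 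Each $g_l$ vanishes at the $n$ distinct points $\tau_1<\dots<\tau_n$, which lie in the interval $[\tau_1,\tau_n]$ of length $\tau_n-\tau_1<d_A$.

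The heart of the proof is the following lemma: \emph{if $Q$ is a real polynomial of degree $r\ge1$, $\varphi\not\equiv0$ solves $Q(\tfrac{d}{dt})\varphi=0$, and $I$ is an interval with $|I|<\pi/\max\{|\mathrm{Im}\,\lambda|:Q(\lambda)=0\}$, then $\varphi$ has at most $r-1$ zeros in $I$, counted with multiplicity.} Applying it with $Q=P$: a nonzero $g_l$ would have at most $n-1$ zeros in $[\tau_1,\tau_n]$, contradicting that $g_l$ vanishes at the $n$ points $\tau_k$. Hence $g_l\equiv0$ for every $l$, i.e. $\eta^{\top}e^{At}B=0$ for all $t$; differentiating at $t=0$ gives $\eta^{\top}A^jB=0$ for all $j\ge0$, so $\eta^{\top}(B,AB,\dots,A^{n-1}B)=0$, and since $(A,B)$ satisfies Kalman's condition \eqref{th-1-1} this matrix has full row rank $n$, forcing $\eta=0$ — a contradiction. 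This proves \eqref{th-o-1-2}.

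To prove the lemma I would induct on $r=\deg Q$, the case $r=1$ being clear ($\varphi$ is a nonzero multiple of a real exponential, so has no zero). For the inductive step: if $Q$ has a real root $\alpha$, write $\varphi=e^{\alpha t}w$; then $Q(\lambda+\alpha)=\lambda R(\lambda)$ for a real polynomial $R$ of degree $r-1$ whose roots are among those of $Q$, whence $R(\tfrac{d}{dt})w'=0$ and the length bound attached to $R$ is no smaller than that attached to $Q$. As $w=e^{-\alpha t}\varphi$ has the same zeros as $\varphi$, namely at least $r$ in $I$ counted with multiplicity, Rolle's theorem gives $w'$ at least $r-1$ zeros in $I$, so by the inductive hypothesis $w'\equiv0$; then $w$ is a constant having a zero, i.e. $\varphi\equiv0$. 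If $Q$ has no real root (so $r$ is even), pick a conjugate pair $\gamma\pm i\delta$ of roots with $\delta=\max\{|\mathrm{Im}\,\lambda|:Q(\lambda)=0\}$, so $|I|<\pi/\delta$; after the reduction $\varphi=e^{\gamma t}w$ one may assume $\gamma=0$, and then $v:=w''+\delta^2w$ satisfies $R(\tfrac{d}{dt})v=0$ for the real polynomial $R$ of degree $r-2$ obtained by deleting the factor $\lambda^2+\delta^2$, again with $|I|$ below the length bound attached to $R$. Since the equation $z''+\delta^2z=0$ is disconjugate on every interval of length $<\pi/\delta$, the operator $\tfrac{d^2}{dt^2}+\delta^2$ lowers the zero count on $I$ by at most $2$, so $v$ has at least $r-2$ zeros in $I$ (with multiplicity); by the inductive hypothesis $v\equiv0$, hence $w=a\cos\delta t+b\sin\delta t$, which has at most one zero in $I$ while $w$ has at least $r\ge2$ zeros there, so $w\equiv0$ and $\varphi\equiv0$.

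I expect the complex-conjugate-pair step to be the main obstacle — more precisely the disconjugacy estimate that $w''+\delta^2w$ loses at most two zeros on an interval shorter than $\pi/\delta$ — and it is the only place where the hypothesis $\tau_n-\tau_1<d_A$ is used; it is also sharp, since Example~\ref{e-2} exhibits a pair $(A,B)$ for which the rank drops when $\tau_2-\tau_1=d_A$. The mechanism I would use is the Lagrange identity $\tfrac{d}{dt}(w'z-wz')=(w''+\delta^2w)\,z$: applied with $z(t)=\sin\delta(t-a)$ and with $z(t)=\sin\delta(b-t)$ between two consecutive zeros $a<b$ of $w$ with $b-a<\pi/\delta$, it pins down the signs of $w'(a)$ and of $w'(b)$ whenever $w''+\delta^2w$ has constant sign on $[a,b]$, and comparing these at three consecutive zeros of $w$ produces a contradiction; this yields the local statement, and the passage to the full count with multiplicities follows from standard disconjugacy theory (via the sign-definiteness of the Green's function of $z''+\delta^2z=0$ on $I$ with zero boundary data), or may be cited from the ODE literature. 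One should also check the straightforward point that every auxiliary polynomial $R$ arising in the recursion is again real with roots among those of $Q$, so the relevant length bounds only increase as the induction proceeds.
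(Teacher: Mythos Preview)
Your reduction of \eqref{th-o-1-2} to the scalar zero-counting lemma is correct and more direct than the paper's: the paper passes through the auxiliary representation $e^{At}=\sum_{i=0}^{n-1}\hat f_i(t)A^i$ and first shows that the $n\times n$ matrix $[\hat f_i(\tau_k)]$ is nonsingular (its Step~2), whereas you work with $g_l(t)=\eta^\top e^{At}b_l$ immediately. Both routes reduce to the same key statement, namely that a nontrivial real solution of $P(\tfrac{d}{dt})\varphi=0$ cannot have $n$ zeros in an interval of length $<d_A$; this is the paper's Lemma~\ref{lo-1}.

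Where the two proofs genuinely diverge is in how that lemma is established. The paper does not invoke disconjugacy theory at all: its Lemma~\ref{lo-2} writes down, for each quadratic factor $(\tfrac{d}{dt}-\beta)(\tfrac{d}{dt}-\bar\beta)$, an explicit P\'olya--Mammana factorization $(e^{-\varphi_1}\tfrac{d}{dt}e^{\varphi_1})(e^{-\varphi_2}\tfrac{d}{dt}e^{\varphi_2})$ built from $\varphi_j'(t)=-\mathrm{Re}\,\beta\pm(\mathrm{Im}\,\beta)\tan\!\big((\mathrm{Im}\,\beta)(t-t_0)\big)$, valid precisely on intervals of length $<\pi/|\mathrm{Im}\,\beta|$; once everything is a composition of first-order real factors, the usual Rolle argument runs in one stroke. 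Your induction handles the real-root step the same way, but for the complex pair you appeal to the statement that a second-order operator disconjugate on $I$ lowers the zero count by at most two, and you leave the multiplicity version to ``standard disconjugacy theory.'' That is a legitimate citation---it is exactly P\'olya's factorization theorem for disconjugate operators---but note that the paper's explicit tangent construction \emph{is} that factorization for $z''+\delta^2 z$, so your black box and the paper's Lemma~\ref{lo-2} are the same object viewed from two sides. If you want your proof to be self-contained, the cleanest fix is simply to insert the paper's tangent factorization at the complex-pair step; your Lagrange-identity sketch can be made to work but, as you anticipated, completing it with multiplicities is more laborious than the one-line factorization.
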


The second main result in this section is the following Theorem~\ref{theoremwang2.2}, which will not be used in the proofs of  our main results of the current paper. However, it is independently interesting.
%Before stating Theorem~\ref{theoremwang2.2}, we give the following definition of the above mentioned exact controllability.
%We mention that there is a difference between our definition and \cite[Definition 2.3.1]{Yang}). That is, the control instants in our definition are independent of the initial data and the target, while those in \cite[Definition 2.3.1]{Yang}) are not.%
%We give the following definition, which is a strong version of the definition given by the note $(b)$ in Section 1 for the definition of the  exact controllability of (\ref{od-1}) (i.e. \cite[Definition 2.3.1]{Yang}).
%
%\begin{definition}\label{Def-2.2}
%Let  $(A, B)\in \mathbb{R}^{n\times n}\times \mathbb{R}^{n\times m}$, $T>0$ and $\{\tau_k\}_{k=1}^{p} \subset (0, T)$, with $p \in \mathbb N^+$, be an increasing sequence. We say that the exact controllability for (\ref{od-1})  can be realized at $\{\tau_k\}_{k=1}^p$, if for each  $z_0, z_1 \in \mathbb R^n$, there is $\{u_k\}_{k=1}^{p} \subset \mathbb R^m$   so that corresponding  solution of (\ref{od-1}) drives  $z_0$ at $t=0$ to $z_1$ at $t=T$.
%\end{definition}
We would like to mention that the result in Theorem~\ref{theoremwang2.2} was claimed, without proof, in
  \cite[Theorem 1]{LXZ}  (also in  \cite[Theorem 2.3.2]{Yang}). (See the proof of   \cite[Theorem 1]{LXZ} or \cite[Theorem 2.3.2]{Yang}.)
\begin{theorem}\label{theoremwang2.2}
Let  $(A, B)\in \mathbb{R}^{n\times n}\times \mathbb{R}^{n\times m}$ and  $T>0$. Let $\{\tau_k\}_{k=1}^p\subset (0,T)$, with $p\in \mathbb{N}^+$, be an increasing sequence. Then the exact controllability for (\ref{od-1})
         over $[0,T]$ can be realized at $\{\tau_k\}_{k=1}^p$ if and only if
         \begin{equation}\label{wanggsgs2.2}
  \mbox{rank}\; (e^{A(T-\tau_1)}B, \dots, e^{A(T-\tau_p)}B)=n.
\end{equation}
\end{theorem}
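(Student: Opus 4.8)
The plan is to reduce Theorem~\ref{theoremwang2.2} to an elementary surjectivity fact, by writing down the explicit solution formula for (\ref{od-1}) and identifying the states reachable at time $T$.

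First I would record the variation-of-constants representation of the solution of (\ref{od-1}), in complete analogy with (\ref{solution-formula}): between consecutive impulse instants the state evolves under the flow $e^{At}$, and at each $\tau_k$ it is incremented by $Bu_k$, so the solution $z$ of (\ref{od-1}) with $z(0)=z_0$ and controls $\{u_k\}_{k=1}^p\subset\mathbb R^m$ satisfies
\begin{equation*}
  z(T)=e^{AT}z_0+\sum_{k=1}^{p}e^{A(T-\tau_k)}Bu_k .
\end{equation*}

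Next I would reformulate the property ``the exact controllability over $[0,T]$ can be realized at $\{\tau_k\}_{k=1}^p$''. By the definition recalled in note~$(b)$ of Section~1, this says that for every pair $z_0,z_1\in\mathbb R^n$ there exist controls $\{u_k\}_{k=1}^p\subset\mathbb R^m$ with $z(T)=z_1$, i.e.
\begin{equation*}
  \sum_{k=1}^{p}e^{A(T-\tau_k)}Bu_k=z_1-e^{AT}z_0 .
\end{equation*}
Taking $z_0=0$ and letting $z_1$ range over $\mathbb R^n$ shows that the right-hand side sweeps out all of $\mathbb R^n$, while conversely it always lies in $\mathbb R^n$; hence the realizability property is equivalent to surjectivity of the linear map $\Phi\colon(\mathbb R^m)^p\to\mathbb R^n$, $\Phi(u_1,\dots,u_p)\triangleq\sum_{k=1}^{p}e^{A(T-\tau_k)}Bu_k$.

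Finally, the matrix of $\Phi$ with respect to the standard bases is exactly the block matrix $\big(e^{A(T-\tau_1)}B,\dots,e^{A(T-\tau_p)}B\big)\in\mathbb R^{n\times (pm)}$, and a linear map into $\mathbb R^n$ is surjective if and only if its matrix has rank $n$; this yields (\ref{wanggsgs2.2}) and settles both implications at once. I do not expect any genuine obstacle here: the whole content is the bookkeeping in the solution formula and the standard identification of the reachable set from the origin with a column space. The only point needing a little care is that the substitution $w=z_1-e^{AT}z_0$ be used correctly in both directions; should one prefer, the necessity half can alternatively be phrased by duality ($\Phi$ is not onto iff some nonzero $\eta\in\mathbb R^n$ satisfies $B^*e^{A^*(T-\tau_k)}\eta=0$ for all $k$), but the direct rank argument is shorter and is the one I would present.
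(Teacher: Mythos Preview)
Your proposal is correct and follows essentially the same approach as the paper: both write the solution formula $z(T)=e^{AT}z_0+\sum_{k=1}^p e^{A(T-\tau_k)}Bu_k$ and reduce the realizability statement to surjectivity of the map $(u_1,\dots,u_p)\mapsto\sum_k e^{A(T-\tau_k)}Bu_k$, which is the rank condition (\ref{wanggsgs2.2}). The only cosmetic difference is that the paper writes sufficiency and necessity separately while you bundle them into a single surjectivity/rank equivalence.
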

The proofs of the above two theorems will be given later. The following result is a direct consequence of the above two theorems:
\begin{corollary}\label{corollarywang2.3}
Let  $(A, B)\in \mathbb{R}^{n\times n}\times \mathbb{R}^{n\times m}$ satisfy  (\ref{th-1-1}). Let $T>0$.  Then for each increasing sequence $\{\tau_k\}_{k=1}^n \subset (0,T)$, with $\tau_n-\tau_1<\mbox{d}_A$, the exact controllability for (\ref{od-1})
         over $[0,T]$ can be realized at $\{\tau_k\}_{k=1}^n$.
\end{corollary}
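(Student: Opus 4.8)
The plan is to chain together the two theorems of this section; the corollary is a formal consequence of Theorem~\ref{th-o-1} and Theorem~\ref{theoremwang2.2}, so the work is purely in lining up the hypotheses.

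First I would invoke Theorem~\ref{theoremwang2.2} with $p=n$. Since $\{\tau_k\}_{k=1}^n\subset(0,T)$ is increasing, that theorem tells us the exact controllability of (\ref{od-1}) over $[0,T]$ can be realized at $\{\tau_k\}_{k=1}^n$ if and only if
\begin{equation*}
  \mbox{rank}\,(e^{A(T-\tau_1)}B,\dots,e^{A(T-\tau_n)}B)=n .
\end{equation*}
Thus it suffices to verify this rank identity.

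Next I would derive this identity from Theorem~\ref{th-o-1}. Set $\sigma_k\triangleq T-\tau_{n+1-k}$ for $k=1,\dots,n$. Since $\tau_1<\cdots<\tau_n$, the sequence $\sigma_1<\cdots<\sigma_n$ is increasing, and
\begin{equation*}
  \sigma_n-\sigma_1=(T-\tau_1)-(T-\tau_n)=\tau_n-\tau_1<\mbox{d}_A .
\end{equation*}
Because $(A,B)$ satisfies Kalman's controllability rank condition (\ref{th-1-1}), Theorem~\ref{th-o-1} applies to $\{\sigma_k\}_{k=1}^n$ and yields $\mbox{rank}\,(e^{A\sigma_1}B,\dots,e^{A\sigma_n}B)=n$. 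The block matrix $(e^{A\sigma_1}B,\dots,e^{A\sigma_n}B)$ differs from $(e^{A(T-\tau_1)}B,\dots,e^{A(T-\tau_n)}B)$ only by a permutation of the $n$ groups of $m$ columns, hence the two matrices have the same rank, which gives (\ref{wanggsgs2.2}) with $p=n$. Combining with the previous paragraph completes the argument.

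The only point needing attention is the bookkeeping: Theorem~\ref{th-o-1} is stated for an \emph{increasing} sequence, so one must pass to the reversed times $\sigma_k=T-\tau_{n+1-k}$ before applying it, and then note that permuting blocks of columns does not change the rank. There is no substantive obstacle beyond this.
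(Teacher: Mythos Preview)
Your proposal is correct and matches the paper's approach: the paper simply states that the corollary is a direct consequence of Theorem~\ref{th-o-1} and Theorem~\ref{theoremwang2.2}, and your argument supplies exactly that chaining, including the only nontrivial bookkeeping step of reversing the time instants so that Theorem~\ref{th-o-1} applies to an increasing sequence.
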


The next Example~\ref{e-1} explains the rationality of the condition that $\tau_n-\tau_1<d_A$ in Theorem~\ref{th-o-1}.
\begin{example}\label{e-1}
We will present a pair $(A,B)\in \mathbb{R}^{2\times 2}\times \mathbb{R}^{2\times 1}$ with (\ref{th-1-1})  so that (\ref{th-o-1-2}) is not true for any $\tau_1$ and $\tau_2$, with  $\tau_2-\tau_1=\mbox{d}_A$. For this purpose, we let
 \begin{equation}\label{wanggssg2.3}
 A=\left(\begin{array}{cc}
                          a & -b \\
                          b & a\\
                        \end{array}
                      \right)
\;\;\mbox{and}\;\;
B=\left(
                        \begin{array}{c}
                          c \\
                          d \\
                        \end{array}
                      \right)
                      \end{equation}
with
\begin{equation}\label{e1-24}
  a,b,c,d \in \mathbb R \;\;\mbox{and}\;\; b (c^2 + d^2) \neq 0.
\end{equation}
One can directly check that
\begin{equation*}
  d_A = \frac{\pi}{|b|} \;\;\mbox{and}\;\;
  \mbox{rank}\; (B, AB)=2\;\; (\mbox{since}\; |(B, AB)| = b(c^2 + d^2) \neq 0).
\end{equation*}
Hence, $(A,B)$ satisfies (\ref{th-1-1}).
Meanwhile, we can easily verify  that
\begin{equation*}
  e^{At}=\left(
                        \begin{array}{cc}
                          e^{at}\cos\;bt & -e^{at}\sin\;bt \\
                          e^{at}\sin\;bt & e^{at}\cos\;bt\\
                        \end{array}
                      \right),
  \;\; t\in \mathbb R.
\end{equation*}
Thus, we have  that for any $\tau_1, \tau_2 \in \mathbb R$,
\begin{equation}\label{e1-1}
  |(e^{A\tau_1}B, e^{A\tau_2}B)| = e^{a(\tau_1 + \tau_2)}(c^2 + d^2)\sin b(\tau_2-\tau_1).
\end{equation}
By (\ref{e1-24}) and (\ref{e1-1}), we see that
\begin{equation*}
 \mbox{rank}\;(e^{A\tau_1}B, e^{A\tau_2}B) =2 \Longleftrightarrow b(\tau_2-\tau_1) \notin \{k\pi\; :\;\; k\in\mathbb N \}.
\end{equation*}
Therefore, when $\tau_1, \tau_2 \in \mathbb R$ with $\tau_2-\tau_1=\frac{\pi}{|b|}$, we have that
\begin{equation*}
  \mbox{rank}\;(e^{A\tau_1}B, e^{A\tau_2}B) <2.
\end{equation*}

Besides, this example shows that for each $T>0$, the exact controllability for the system (\ref{od-1}) (with $(A,B)$
 given by (\ref{wanggssg2.3})) over $[0,T]$ cannot be realized at any single control instant. Indeed, this follows at once from Theorem~\ref{theoremwang2.2}
 and the fact that  $\mbox{rank}\;(e^{A\tau}B) <2$ for all $\tau\in \mathbb{R}$.
\end{example}

\begin{remark}
 (i) It was obtained in  \cite[Theorem 1]{LXZ}
          (see also \cite[Theorem 2.3.2]{Yang})
         that if $(A,B)$ satisfies  (\ref{th-1-1}), then for each $T>0$, there is
         $p\in \mathbb{N}^+$ and an increasing sequence $\{\tau_k\}_{k=1}^p\subset (0,T)$ so that
          the exact controllability for (\ref{od-1})
         over $[0,T]$ can be realized at $\{\tau_k\}_{k=1}^p$. However,
         \cite[Theorem 1]{LXZ} does not provide the positions of  $\{\tau_k\}_{k=1}^p$.
         Our Corollary~\ref{corollarywang2.3} shows that  if $(A,B)$ satisfies  (\ref{th-1-1}), then for each $T>0$, the exact controllability for (\ref{od-1})
         over $[0,T]$ can be realized at any increasing sequence $\{\tau_k\}_{k=1}^n\subset (0,T)$, with $\tau_n-\tau_1<d_A$. And our  Example~\ref{e-1} shows  the rationality of the condition that $\tau_n-\tau_1<d_A$.
         Though our Corollary~\ref{corollarywang2.3}
          improves a lot  \cite[Theorem 1]{LXZ} from perspective of control instants,
          some important idea in \cite[Theorem 1]{LXZ} helps us greatly.

          The author in \cite{LXZ} further  claimed that (see \cite[Theorem 1]{LXZ}, or    \cite[Theorem 2.3.2]{Yang})
           when $(A,B)$ satisfies (\ref{th-1-1}), the number of control instants  can be taken as the smallest  integer which is bigger than or equals to $n/m$. Unfortunately, we do not understand the proof of this part. More precisely, we do not understand the argument from Lines 8-9 on Page 83 in \cite{LXZ}.

         (ii) It deserves to mention several facts on  the exact controllability for (\ref{od-1}) where $(A,B)$ satisfies  (\ref{th-1-1}). Fact one: In general,
          for an arbitrarily fixed $T>0$,  we cannot arbitrarily take an increasing sequence $\{\tau_k\}_{k=1}^n\subset (0,T)$  so that
         the exact controllability for (\ref{od-1})
         over $[0,T]$  can  be realized at any increasing sequence $\{\tau_k\}_{k=1}^n\subset (0,T)$ (see
         Example~\ref{e-1}). Fact two:  If all eigenvalues of $A$ are real, then for each $T>0$,  the exact controllability for (\ref{od-1})
         over $[0,T]$  can  be realized at  any $\{\tau_k\}_{k=1}^n\subset (0,T)$. Indeed, in this case, we have that $d_A=\infty$ (see (\ref{th-o-1-1})). Fact three: When
         $T\leq d_A$, the exact controllability for (\ref{od-1})
         over $[0,T]$  can  be realized at  any increasing sequence $\{\tau_k\}_{k=1}^n\subset (0,T)$.
         %However,
%         the following statement is not true:
%
%         \begin{itemize}
%  \item
%         Let $0<T_1<T_2$. Suppose that
%         the exact controllability for (\ref{od-1})
%         over $[0,T_1]$  can  be realized at   $\{\tau_k\}_{k=1}^n\subset (0,T_1)$.
%         Then the exact controllability for (\ref{od-1})
%         over $[0,T_2]$  can  be realized at   $\{\tau_k\}_{k=1}^n$.
%         \end{itemize}
\end{remark}

To prove Theorem \ref{th-o-1}, we need two lemmas. The first lemma presents a kind of decomposition for some  high order ordinary differential operators.

\begin{lemma}\label{lo-2}
Let $C \in \mathbb R^{n \times n}$. Let
   $g(\lambda) = \lambda^n + \sum_{i=0}^{n-1}a_i\lambda^i,\; \lambda \in \mathbb C$ be the characteristic polynomial of the matrix $C$. Let $d_C$ be given by (\ref{th-o-1-1}). Then given $t_0 \in \mathbb R$, there is $\{\phi_i\}_{i=1}^{n}
\subset C^{\infty} \big((t_0 - \frac{d_C}{2}, t_0 + \frac{d_C}{2}); \mathbb R \big)$
so that for each $h \in C^\infty\big((t_0 - \frac{d_C}{2}, t_0 + \frac{d_C}{2}); \mathbb R\big)$,
\begin{eqnarray}\label{lo-2-1}
  g(\frac{d}{dt})h
  = \big(e^{-\phi_1}\circ\frac{d}{dt}\circ e^{\phi_1}\big)\circ\cdots\circ\big(e^{-\phi_n}\circ\frac{d}{dt}\circ e^{\phi_n}\big)h.
\end{eqnarray}
Here, each function $e^{\phi_i}$ (with $i=1,\dots,n$) is regarded as the operator $h \mapsto e^{\phi_i}h$
and the notation $``\circ"$ denotes the composition of operators.
\end{lemma}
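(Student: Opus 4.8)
The plan is to translate the claimed factorization into a purely algebraic factorization of the constant-coefficient operator $g(\tfrac{d}{dt})$ into commuting pieces, to treat the real eigenvalue blocks and the complex-conjugate eigenvalue blocks separately, and to resolve the complex blocks by solving a single scalar Riccati equation whose maximal singularity-free interval around $t_0$ dictates the length $d_C$.

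First I would record the elementary identity
\[
 e^{-\phi}\circ\tfrac{d}{dt}\circ e^{\phi}=\tfrac{d}{dt}+\phi',\qquad \phi\in C^\infty(I;\mathbb R),
\]
valid on any open interval $I$, together with its converse: given $\psi\in C^\infty(I;\mathbb R)$, the antiderivative $\phi(t)=\int_{t_0}^{t}\psi(s)\,ds$ is smooth on $I$ with $\phi'=\psi$. Hence (\ref{lo-2-1}) is equivalent to producing real functions $\psi_1,\dots,\psi_n\in C^\infty(I;\mathbb R)$, with $I\triangleq(t_0-\tfrac{d_C}{2},t_0+\tfrac{d_C}{2})$, such that $g(\tfrac{d}{dt})=(\tfrac{d}{dt}+\psi_1)\circ\cdots\circ(\tfrac{d}{dt}+\psi_n)$.

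Next I would factor $g$ over $\mathbb R$. Writing $r_1,\dots,r_p$ for the real eigenvalues of $C$ and $\alpha_k\pm i\beta_k$ ($k=1,\dots,q$, $\beta_k>0$) for the non-real ones, all counted with multiplicity, one has $g(\lambda)=\prod_{j=1}^{p}(\lambda-r_j)\prod_{k=1}^{q}\bigl[(\lambda-\alpha_k)^2+\beta_k^2\bigr]$. Since constant-coefficient operators commute and operator composition mirrors polynomial multiplication, $g(\tfrac{d}{dt})=\prod_{j=1}^{p}(\tfrac{d}{dt}-r_j)\circ\prod_{k=1}^{q}\bigl[(\tfrac{d}{dt}-\alpha_k)^2+\beta_k^2\bigr]$. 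Each real linear factor $\tfrac{d}{dt}-r_j$ is already of the required form, with $\psi=-r_j$ constant. The crux is to split each irreducible quadratic factor $(\tfrac{d}{dt}-\alpha)^2+\beta^2$ (with $\beta=\beta_k>0$) into a composition of two first-order operators with smooth real coefficients on $I$. For this I would first conjugate away $\alpha$ via $(\tfrac{d}{dt}-\alpha)^2+\beta^2=e^{\alpha t}\circ\bigl[(\tfrac{d}{dt})^2+\beta^2\bigr]\circ e^{-\alpha t}$, reducing to the factorization of $(\tfrac{d}{dt})^2+\beta^2$; matching $(\tfrac{d}{dt}+\psi_1)(\tfrac{d}{dt}+\psi_2)h=h''+(\psi_1+\psi_2)h'+(\psi_2'+\psi_1\psi_2)h$ against $h''+\beta^2 h$ forces $\psi_1=-\psi_2$ and the Riccati equation $\psi_2'-\psi_2^2=\beta^2$, which is solved by $\psi_2(t)=\beta\tan\bigl(\beta(t-t_0)\bigr)$; this function is smooth exactly on $(t_0-\tfrac{\pi}{2\beta},t_0+\tfrac{\pi}{2\beta})$. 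Conjugating back by $e^{\alpha t}$ replaces $(\psi_1,\psi_2)$ by $(\psi_1-\alpha,\psi_2-\alpha)$, still smooth on the same interval.

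Finally I would assemble the pieces. Concatenating the two-factor decompositions of the $q$ quadratic blocks with the $p$ constant linear factors (the ordering among blocks is immaterial, each block being a constant-coefficient operator) exhibits $g(\tfrac{d}{dt})$ as a composition of $n$ first-order operators $\tfrac{d}{dt}+\psi_i$ with all $\psi_i\in C^\infty(I;\mathbb R)$; this is legitimate because $d_C\le\pi/\beta_k$ for every $k$ by the definition (\ref{th-o-1-1}), so $I\subset(t_0-\tfrac{\pi}{2\beta_k},t_0+\tfrac{\pi}{2\beta_k})$ and each $\psi_i$ is genuinely smooth on $I$ (and when $C$ has only real eigenvalues, $q=0$, $d_C=+\infty$, and all $\psi_i$ are constant). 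Setting $\phi_i(t)=\int_{t_0}^{t}\psi_i(s)\,ds$ then yields (\ref{lo-2-1}). I expect the quadratic step to be the only real obstacle: recognizing that the real (as opposed to complex) factorization of $(\tfrac{d}{dt}-\alpha)^2+\beta^2$ is governed by a Riccati equation whose singularity-free interval centered at $t_0$ has length precisely $\pi/\beta$ is exactly what produces, and makes sharp, the constant $d_C/2$ appearing in the statement (cf.\ Example~\ref{e-1}).
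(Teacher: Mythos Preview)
Your proposal is correct and follows essentially the same approach as the paper's proof: factor $g$ over $\mathbb R$ into real linear and irreducible quadratic factors, handle the linear factors trivially, and split each quadratic factor $(\tfrac{d}{dt}-\alpha)^2+\beta^2$ using the tangent-based solution $\pm\beta\tan\beta(t-t_0)-\alpha$ of the associated Riccati relation, with the singularity-free interval $(t_0-\tfrac{\pi}{2\beta},t_0+\tfrac{\pi}{2\beta})$ containing $I$ because $d_C\le\pi/\beta_k$ for every $k$. The only cosmetic difference is that you conjugate away $\alpha$ before solving the Riccati equation and then conjugate back, whereas the paper writes down $\varphi_1'=-\alpha-\beta\tan\beta(t-t_0)$ and $\varphi_2'=-\alpha+\beta\tan\beta(t-t_0)$ directly; the resulting $\phi_i$'s are identical.
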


\begin{proof}
Arbitrarily fix $t_0 \in \mathbb R$. We first claim the following two facts:
\begin{itemize}
  \item (O1) For each $a \in \mathbb R$,
\begin{equation}\label{lo-2-3}
  \frac{d}{dt} - a = e^{at}\circ\frac{d}{dt}\circ e^{-at}.
\end{equation}
  \item (O2) For any $b, c \in \mathbb R$ with $c \neq 0$, there are
 $\varphi_1$ and $\varphi_2$ in $C^\infty \big( (t_0 - \frac{\pi}{2|c|}, t_0 + \frac{\pi}{2|c|} ); \mathbb R \big)$ so that for each $h \in C^\infty\big( (t_0 - \frac{\pi}{2|c|}, t_0 + \frac{\pi}{2|c|} ); \mathbb R \big)$,
\begin{equation}\label{lo-2-4}
  \big[\frac{d^2}{dt^2} - 2b\frac{d}{dt} + (b^2+c^2)\big]h
  = \big(e^{-\varphi_1}\circ\frac{d}{dt}\circ e^{\varphi_1}\big)\circ\big(e^{-\varphi_2}\circ\frac{d}{dt}\circ e^{\varphi_2}\big)h.
\end{equation}
\end{itemize}
The fact  (O1) can be directly checked. To prove  (O2), we define two functions by
\begin{eqnarray*}
\left.
\begin{array}{ll}
  \varphi_1(t) &\triangleq \displaystyle\int_{t_0}^t \big[-b-c\tan c(\tau-t_0)\big]dt,
  \;\; t \in \Big(t_0-\frac{\pi}{2|c|}, t_0+\frac{\pi}{2|c|} \Big),\\
  \varphi_2(t) &\triangleq \displaystyle\int_{t_0}^t \big[-b+c\tan c(\tau-t_0)\big]dt,
  \;\; t \in \Big(t_0-\frac{\pi}{2|c|}, t_0+\frac{\pi}{2|c|} \Big).
\end{array}
\right.
\end{eqnarray*}
It is clear that $\varphi_1$ and $\varphi_2$ are in $C^\infty \big( (t_0-\frac{\pi}{2|c|}, t_0+\frac{\pi}{2|c|} ); \mathbb R \big)$. By direct computations, we see that for each $t \in \big(t_0-\frac{\pi}{2|c|}, t_0 + \frac{\pi}{2|c|} \big)$,
\begin{eqnarray}\label{wanggsgs2.9}
\varphi'_1(t)+\varphi'_2(t)=-2b \;\;\mbox{and}\;\;
\varphi'_1(t)\varphi'_2(t)+\varphi''_2(t)=b^2+c^2.
\end{eqnarray}
Meanwhile, one can easily check  that for each $h \in C^{\infty}\big( (t_0 - \frac{\pi}{2|c|}, t_0+\frac{\pi}{2|c|} ); \mathbb R \big)$ and each $t \in \big(t_0-\frac{\pi}{2|c|}, t_0+\frac{\pi}{2|c|} \big)$,
\begin{eqnarray}\label{lo-2-16}
&&\Big(e^{-\varphi_1}\circ\frac{d}{dt}\circ e^{\varphi_1}\Big)\circ\Big(e^{-\varphi_2}\circ\frac{d}{dt}\circ e^{\varphi_2}\Big)h(t)
\nonumber\\
&=& \frac{d^2}{dt^2}h(t)+\big(\varphi'_1(t)+\varphi'_2(t)\big)\frac{d}{dt}h(t)
  + \big(\varphi'_1(t)\varphi'_2(t)+\varphi''_2(t)\big)h(t).
\end{eqnarray}
Now, (\ref{lo-2-4}) follows from (\ref{wanggsgs2.9}) and  (\ref{lo-2-16}), i.e.,  the
fact (O2) is true.

Next, since $g(\cdot)$ is a  polynomial with real coefficients (which implies that $g(\bar{\lambda}_0)=0$ if and only if $g(\lambda_0)=0$),  we can write all of its roots (i.e., the solutions of $g(\lambda)=0$) in the following manner:
\begin{equation*}
\alpha_1, \dots, \alpha_{n_1}\;(\mbox{real}),\; \; \beta_1, \bar{\beta}_1, \dots, \beta_{n_2},  \bar{\beta}_{n_2}\; (\mbox{non-real}), \;\;\mbox{with}\;\; n_1+2n_2=n.
\end{equation*}
(Here, it is allowed that the multiplicity of $\alpha_i$ (or $\beta_j$) is bigger than $1$. i.e., it may happen that $\alpha_1=\alpha_2$ (or $\beta_1=\beta_2$).)
 There are only three possibilities on $n_1$: (i) $n_1=0$; (ii) $n_1=n$; (iii) $1\leq n_1<n$.

Case\;(i): We have that $n_1=0$ and $n=2n_2$. Then
\begin{eqnarray}\label{lo-2-15-1}
  g(\frac{d}{dt})
  =\prod_{i=1}^{n_2}(\frac{d}{dt}-\beta_i\big)\big(\frac{d}{dt}-\overline{\beta}_i)
  =\prod_{i=1}^{n_2}\big(\frac{d^2}{dt^2}-2\mbox{Re}\; \beta_i\frac{d}{dt}
   + |\beta_i|^2\big).
\end{eqnarray}
For each $i \in \{1,\ldots, n_2\}$, we can apply the fact (O2), where
$b=\mbox{Re}\; \beta_i$ and $c=\mbox{Im}\; \beta_i$,
to find  $\phi_{i_1}$ and $\phi_{i_2}$ in $C^\infty \big( (t_0-\frac{\pi}{2|\mbox{Im}\;\beta_i|}, t_0+\frac{\pi}{2|\mbox{Im}\;\beta_i|}); \mathbb R \big)$ so that for each $h \in C^\infty \big( (t_0-\frac{\pi}{2|\mbox{Im}\;\beta_i|}, t_0+\frac{\pi}{2|\mbox{Im}\;\beta_i|}); \mathbb R \big)$,
\begin{equation}\label{lo-2-17}
  \big[\frac{d^2}{dt^2} - 2\mbox{Re}\; \beta_i\frac{d}{dt}
   + |\beta_i|^2\big]h
  =\big[(e^{-\phi_{i_1}}\circ\frac{d}{dt}\circ e^{\phi_{i_1}})\circ
  (e^{-\phi_{i_2}}\circ\frac{d}{dt}\circ e^{\phi_{i_2}})\big]h.
\end{equation}
Meanwhile, from the definition of $d_C$ (see (\ref{th-o-1-1})), it follows that $d_C \leq \frac{\pi}{|\mbox{Im}\;\beta_i|}$ for each $i \in \{1, \cdots, n_2\}$. This, along with (\ref{lo-2-15-1}) and (\ref{lo-2-17}), leads to (\ref{lo-2-1}) in Case (i).

 Case\;(ii): We have that $n_1=n$ and $n_2=0$. Then
\begin{eqnarray}\label{lo-2-15-2}
  g(\frac{d}{dt})
  = \prod_{i=1}^n\big(\frac{d}{dt} - \alpha_i\big).
\end{eqnarray}
For each $i \in \{1,\ldots, n\}$, we can apply the fact (O1), where $a = \alpha_i,$ to find $\phi_i \in C^\infty(\mathbb R; \mathbb R)$ so that
\begin{equation*}
  \frac{d}{dt} - \alpha_i
  = e^{-\phi_i}\circ\frac{d}{dt}\circ e^{\phi_i}.
\end{equation*}
This, along with (\ref{lo-2-15-2}), leads to (\ref{lo-2-1}) in Case (ii).

 Case\;(iii): We have that $n_1 \geq 1$ and $n_2 \geq 1$. Then
\begin{eqnarray}\label{lo-2-15-3}
  g(\frac{d}{dt})
  &=& \Big[\prod_{i=1}^{n_1}\big(\frac{d}{dt} - \alpha_i\big)\Big]
  \Big[\prod_{j=1}^{n_2}\big(\frac{d}{dt}-\beta_j\big)\big(\frac{d}{dt}-\overline{\beta}_j\big)\Big]
  \nonumber\\
  &=&\Big[\prod_{i=1}^{n_1}\big(\frac{d}{dt} - \alpha_i\big)\Big]
   \Big[\prod_{j=1}^{n_2}\big(\frac{d^2}{dt^2} - 2\mbox{Re}\; \beta_j\frac{d}{dt}
   + |\beta_j|^2\Big].
\end{eqnarray}
For each $i \in \{1,\ldots, n_1\}$ and each $j \in \{1,\ldots, n_2\}$, we can  apply respectively
 the facts  (O1) where $a=\alpha_i$ and (O2) where $b=\mbox{Re}\; \beta_j$ and
 $c=\mbox{Im}\; \beta_j$, to
 find that  $\phi_i \in C^\infty(\mathbb R; \mathbb R)$ and $\phi_{j_1}, \phi_{j_2} \in C^\infty \Big( \big(t_0-\frac{\pi}{2|\mbox{Im}\; \beta_j|}, t_0+\frac{\pi}{2|\mbox{Im}\; \beta_j|} \big); \mathbb R \Big)$,
so that for each $h \in C^\infty \big( (t_0-\frac{\pi}{2|\mbox{Im}\;\beta_j|}, t_0+\frac{\pi}{2|
\mbox{Im}\;\beta_j|}); \mathbb R \big)$,
\begin{eqnarray}\label{lo-2-15-4}
\left.
\begin{array}{ll}
  ~~~~~~~~~~~~~~~~\big(\frac{d}{dt} - \alpha_i\big)h
  &= \big(e^{-\phi_i}\circ\frac{d}{dt}\circ e^{\phi_i}\big)h,
  \\
  \big[\frac{d^2}{dt^2} - 2\mbox{Re}\; \beta_j\frac{d}{dt}
   + |\beta_j|^2\big]h
  &=\big[(e^{-\phi_{j_1}}\circ\frac{d}{dt}\circ e^{\phi_{j_1}})\circ
  (e^{-\phi_{j_2}}\circ\frac{d}{dt}\circ e^{\phi_{j_2}})\big]h.
  \end{array}
\right.
\end{eqnarray}
Meanwhile, from the definition of $d_C$ (see (\ref{th-o-1-1})), it follows that $d_C \leq \frac{\pi}{|\mbox{Im}\;\beta_j|}$ for each $j \in \{1, \cdots, n_2\}$. This, along with (\ref{lo-2-15-3}) and (\ref{lo-2-15-4}), leads to (\ref{lo-2-1}) in Case (iii).

\vskip 5pt
In summary, we end the proof of Lemma~\ref{lo-2}.
\end{proof}

The following lemma presents a kind of unique continuation for some high order ordinary differential equations.

\begin{lemma}\label{lo-1}
Let $C \in \mathbb R^{n \times n}$. Let $g(\lambda) = \lambda^n + \sum_{i=0}^{n-1}a_i\lambda^i,\; \lambda \in \mathbb C$ be the characteristic polynomial of the matrix $C$. Let $\mbox{d}_C$ be given by (\ref{th-o-1-1}).  Suppose that  $f \in C^{\infty}(\mathbb R; \mathbb R)$ satisfies   that
\begin{equation}\label{lo-1-2}
  g(\frac{d}{dt})f = 0 \;\; \mbox{over}\; \mathbb R
  \;\;\mbox{and}\;\;
  f(\tau_k) = 0, \;\; \forall\; k \in \{1, \ldots, n\},
\end{equation}
for some increasing sequence $\{\tau_k\}_{k=1}^n \subset \mathbb R$, with $\tau_n - \tau_1 < \mbox{d}_C$.
Then
\begin{equation}\label{lo-1-3}
  f \equiv 0 \;\; \mbox{over}\; \mathbb R.
\end{equation}
\end{lemma}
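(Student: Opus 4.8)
The plan is to turn the operator factorization of Lemma~\ref{lo-2} into a Rolle-type iteration on a well-chosen interval, deduce that $f$ vanishes on that interval, and then propagate the vanishing to all of $\mathbb R$ by uniqueness for linear ordinary differential equations. First I would set $t_0\triangleq (\tau_1+\tau_n)/2$ and $I\triangleq (t_0-\frac{d_C}{2},\,t_0+\frac{d_C}{2})$. Because $\tau_n-\tau_1<d_C$, this midpoint choice gives $\tau_1,\tau_n\in I$, hence the whole sequence $\{\tau_k\}_{k=1}^n\subset I$. Applying Lemma~\ref{lo-2} with this $t_0$ produces $\phi_1,\dots,\phi_n\in C^\infty(I;\mathbb R)$ so that, writing $L_ih\triangleq e^{-\phi_i}\frac{d}{dt}(e^{\phi_i}h)=h'+\phi_i'h$, one has $g(\frac{d}{dt})h=(L_1\circ\cdots\circ L_n)h$ on $I$. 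I would then set $h_0\triangleq f|_I$ and $h_k\triangleq L_{n-k+1}h_{k-1}$ for $k=1,\dots,n$, so that by (\ref{lo-1-2}) we have $h_n=g(\frac{d}{dt})f=0$ on $I$.

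The forward step is a counting argument via Rolle's theorem: I claim $h_k$ has at least $n-k$ zeros in $I$ for $k=0,1,\dots,n-1$. For $k=0$ this is exactly the hypothesis $f(\tau_1)=\cdots=f(\tau_n)=0$. For the inductive step, if $h_{k-1}$ vanishes at $s_1<\cdots<s_{n-k+1}$ in $I$, then $e^{\phi_{n-k+1}}h_{k-1}$ (a $C^1$ function on $I$) vanishes at the same points, so Rolle's theorem yields $\sigma_l\in(s_l,s_{l+1})$ with $(e^{\phi_{n-k+1}}h_{k-1})'(\sigma_l)=0$, i.e. $h_k(\sigma_l)=0$; these give $n-k$ distinct zeros of $h_k$ in $I$. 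In particular $h_{n-1}$ has at least one zero in $I$.

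The backward step reverses the factorization by integration. From $h_n\equiv 0$ on $I$ and $h_n=e^{-\phi_1}\frac{d}{dt}(e^{\phi_1}h_{n-1})$ we get $e^{\phi_1}h_{n-1}\equiv\text{const}$ on $I$; since $h_{n-1}$ has a zero in $I$, that constant is $0$, so $h_{n-1}\equiv 0$ on $I$. Iterating: if $h_k\equiv 0$ on $I$ for some $k\in\{1,\dots,n\}$, then $e^{\phi_{n-k+1}}h_{k-1}\equiv\text{const}$ on $I$, and since $h_{k-1}$ has at least $n-k+1\ge 1$ zeros in $I$, the constant is $0$, whence $h_{k-1}\equiv 0$ on $I$. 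Running $k$ down to $1$ gives $f=h_0\equiv 0$ on $I$. Finally, the vector $v\triangleq(f,f',\dots,f^{(n-1)})^\top$ solves the first-order linear system $\dot v=Mv$ on $\mathbb R$ with $M$ the companion matrix of $g$, and $v$ vanishes on $I$ (in particular $v(t_0)=0$); uniqueness for linear ODEs forces $v\equiv 0$ on $\mathbb R$, hence $f\equiv 0$ on $\mathbb R$, which is (\ref{lo-1-3}). (When $d_C=+\infty$ one has $I=\mathbb R$ and this last step is vacuous.)

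The only delicate points are bookkeeping: verifying that the midpoint $t_0$ genuinely places every $\tau_k$ inside the length-$d_C$ interval on which Lemma~\ref{lo-2} is valid, and keeping the index shift $h_k=L_{n-k+1}h_{k-1}$ consistent through both the Rolle iteration (which loses one zero per step) and the reverse integration (which needs at least one zero per step). Beyond that, everything is routine.
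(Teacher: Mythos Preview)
Your proof is correct and follows essentially the same approach as the paper: both use the factorization from Lemma~\ref{lo-2} on an interval of length $d_C$ containing the $\tau_k$, run a Rolle iteration forward to produce zeros of the successive factors, then integrate backward to force $f\equiv 0$ on the interval, and finally extend to all of $\mathbb R$. The only cosmetic differences are your explicit midpoint choice for $t_0$ and your use of ODE uniqueness (via the companion matrix) in place of the paper's appeal to analyticity for the global extension.
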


\begin{proof}
Let $f$ (in $C^{\infty}(\mathbb R; \mathbb R)$) and an increasing sequence $\{\tau_i\}_{i=1}^n$ (in $\mathbb R$), with $\tau_n-\tau_1<d_C$, satisfy (\ref{lo-1-2}). We aim to show that $f$ satisfy (\ref{lo-1-3}).
For this purpose, we arbitrarily fix  $\hat t_0\in\mathbb R$ so that
\begin{equation}\label{lo-16-1}
  \{\tau_i\}_{i=1}^n \subset \Big(\hat t_0-\frac{d_C}{2}, \hat t_0+\frac{d_C}{2}\Big).
\end{equation}
(Such $\hat t_0$ exists because $\tau_n-\tau_1<d_C$.)
According to Lemma \ref{lo-2},  there is a sequence $\{\phi_i\}_{i=1}^n$ in $ C^\infty\big((\hat t_0-\frac{d_C}{2}, \hat t_0+\frac{d_C}{2}); \mathbb R \big)$ so that for each $h \in C^\infty\big((\hat t_0-\frac{d_C}{2}, \hat t_0+\frac{d_C}{2}); \mathbb R \big)$,
\begin{equation}\label{lo-1-4}
   g(\frac{d}{dt})h
  = \big(e^{-\phi_n}\circ\frac{d}{dt}\circ e^{\phi_n}\big)\circ\cdots\circ\big(e^{-\phi_1}\circ\frac{d}{dt}\circ e^{\phi_1}\big)h.
\end{equation}

                When $n=1$, we have that $g(\frac{d}{dt}) = \frac{d}{dt} + a_0$,
                which, along with  (\ref{lo-1-2}), yields that $f'(t)+a_0f(t)=0$ for all $t\in \mathbb{R}$; and that $f(\tau_1)=0$. Hence, $f\equiv 0$, i.e.,
                (\ref{lo-1-3}) is true in the case that $n=1$.

                We now show   (\ref{lo-1-3}) for the case when $n\geq 2$.
The  proof  is organized as two steps.

\vskip5pt
\textit{Step 1. We show that for each $k \in \{1, \ldots, n-1\},$ there is an increasing sequence $\{\xi_{k,j}\}_{j=1}^{n-k} \subset (\hat t_0 - \frac{d_C}{2}, \hat t_0 + \frac{d_C}{2})$ so that
\begin{equation}\label{s1-1}
   \Big[\big(e^{-\phi_k}\circ\frac{d}{dt}\circ e^{\phi_k}\big)\circ\cdots\circ\big(e^{-\phi_1}\circ\frac{d}{dt}\circ e^{\phi_1}\big)f\Big](\xi_{k,j}) = 0,
   \;\;\forall\; j\in \{1,\ldots, n-k\}.
\end{equation}
}

First, we show (\ref{s1-1}) with $k=1$. By the second equality in (\ref{lo-1-2}), we find that
\begin{equation*}
   (e^{\phi_1}f)(\tau_i) = 0,
   \;\;\forall\; i\in \{1,\ldots, n\}.
\end{equation*}
From this and the mean value theorem, we deduce that there exists an increasing sequence $\{\xi_{1,j}\}_{j=1}^{n-1} \subset (\tau_1, \tau_n)$ so that
\begin{equation*}
   \big[\frac{d}{dt}(e^{\phi_1}f)\big](\xi_{1,j}) = 0, \;\;\forall\; j\in \{1,\ldots, n-1\},
\end{equation*}
from which, it follows  that
\begin{equation*}
   \big[e^{-\phi_1}\frac{d}{dt}(e^{\phi_1}f)\big](\xi_{1,j}) = 0, \;\;\forall\; j\in \{1,\ldots, n-1\}.
\end{equation*}
Since $(\tau_1, \tau_n) \subset (\hat t_0 - \frac{d_C}{2}, \hat t_0 + \frac{d_C}{2})$ (see (\ref{lo-16-1})), the above yields (\ref{s1-1}) for $k=1$.

Next, we will show (\ref{s1-1}) for each $k \in \{1,\ldots, n-1\}$. Since we are in the case that $n \geq 2$, there are only two possibilities on $n$: either $n=2$ or $n \geq 3$. In the first case that $n=2$, we have that $n-1=1$. Then (\ref{s1-1}) has been proved since $k$ can only take $1$ now.

 In the second case $n\geq 3$, we will  show (\ref{s1-1}) by using mathematical induction.  We already have (\ref{s1-1}) with $k=1$.
Suppose that (\ref{s1-1}) holds for $k=m$, with  $m<n-1$. That is,  there exists an increasing sequence $\{\xi_{m,j}\}_{j=1}^{n-m} \subset (\hat t_0-\frac{d_C}{2}, \hat t_0+\frac{d_C}{2})$ so that
\begin{equation}\label{s1-2}
   \Big[\big(e^{-\phi_m}\circ\frac{d}{dt}\circ e^{\phi_m}\big)\circ\cdots\circ\big(e^{-\phi_1}\circ\frac{d}{dt}\circ e^{\phi_1}\big)f\Big](\xi_{m,j}) = 0,
   \;\;\forall\; j\in \{1,\ldots, n-m\}.
\end{equation}
We aim to prove (\ref{s1-1}) with $k= m+1$. For this purpose, we set
\begin{equation}\label{s1-3}
  q_m(t) \triangleq \big(e^{-\phi_m}\circ\frac{d}{dt}\circ e^{\phi_m}\big)\circ\cdots\circ\big(e^{-\phi_1}\circ\frac{d}{dt}\circ e^{\phi_1}\big)f(t),\;\;
  t \in \Big(\hat t_0-\frac{d_C}{2}, \hat t_0+\frac{d_C}{2}\Big).
\end{equation}
From (\ref{s1-3}) and (\ref{s1-2}), it follows that
\begin{equation*}
  e^{\phi_{m+1}}q_m(\xi_{m,j}) = 0, \;\;\forall\; j \in \{1, \ldots, n-m\}.
\end{equation*}
By this and the mean value theorem, we find that there exists an increasing sequence $\{\xi_{m+1,j}\}_{j=1}^{n-m-1} \subset (\hat t_0 - \frac{d_C}{2}, \hat t_0 + \frac{d_C}{2})$ so that
\begin{equation*}
  \frac{d}{dt}\big(e^{\phi_{m+1}}q_m\big)(\xi_{m+1,j}) = 0, \;\;\forall\; j \in \{1, \ldots, n-m-1\},
\end{equation*}
from which, it follows  that
\begin{equation*}
 e^{-\phi_{m+1}} \frac{d}{dt}\big(e^{\phi_{m+1}}q_m\big)(\xi_{m+1,j}) = 0, \;\;\forall\; j \in \{1, \ldots, n-m-1\}.
\end{equation*}
This, along with (\ref{s1-3}), leads to (\ref{s1-1}) with $k = m+1$. In summary, we conclude that  (\ref{s1-1}) is true. This ends the proof of Step 1.

\vskip5pt
\textit{Step 2. We show (\ref{lo-1-3}).}

We first claim that for each $k \in \{1, \ldots, n\},$
\begin{equation}\label{s2-1}
   \big(e^{-\phi_k}\circ\frac{d}{dt}\circ e^{\phi_k}\big)\circ\cdots\circ\big(e^{-\phi_1}\circ\frac{d}{dt}\circ e^{\phi_1}\big)f \equiv 0
   \;\;\mbox{over}\; \Big(\hat t_0-\frac{d_C}{2}, \hat t_0+\frac{d_C}{2}\Big).
\end{equation}
(We will only use (\ref{s2-1}) with $k=1$ later.)
By contradiction, we suppose that (\ref{s2-1}) was not true for some $\hat k \in \{1, \ldots, n\}.$ Then we would have that
\begin{equation}\label{s2-3}
  S \neq \emptyset,\;\;\mbox{where}\;\;S \triangleq \big\{\bar k \in \{1,\ldots,n\} \;:\; (\ref{s2-1}),\;\mbox{with}\;k=\bar k,\;\; \mbox{fails} \big\}.
\end{equation}
By (\ref{lo-1-2}) and (\ref{lo-1-4}), we see that  (\ref{s2-1}), with $k=n$, is true. Hence, $n\notin S$. This, along with (\ref{s2-3}), yields that
\begin{equation}\label{s2-4}
  \hat k_1 \triangleq \max_{\bar k \in S} \bar k <n.
\end{equation}
By (\ref{s2-4})  and the definition of $S$ (see (\ref{s2-3})), we find that (\ref{s2-1}), with $k=\hat k_1+1 (\leq n)$,
is true.  That is,
\begin{equation}\label{s2-5}
   (e^{-\phi_{\hat k_1+1}}\circ\frac{d}{dt}\circ e^{\phi_{\hat k_1+1}}\big)\circ\cdots\circ\big(e^{-\phi_1}\circ\frac{d}{dt}\circ e^{\phi_1}\big)f \equiv 0
   \;\;\mbox{over}\; \Big(\hat t_0 - \frac{d_C}{2}, \hat t_0 + \frac{d_C}{2}\Big).
\end{equation}
Define a function $\hat f$ in the following manner:
\begin{equation}\label{s2-6}
 \hat f(t) \triangleq \big(e^{-\phi_{\hat k_1}}\circ\frac{d}{dt}\circ e^{\phi_{\hat k_1}}\big)\circ\cdots\circ\big(e^{-\phi_1}\circ\frac{d}{dt}\circ e^{\phi_1}\big)f(t),
   \;\; t\in \Big(\hat t_0-\frac{d_C}{2}, \hat t_0+\frac{d_C}{2}\Big).
\end{equation}
By (\ref{s2-6}) and (\ref{s2-5}), we find that
\begin{equation*}
 \big(e^{-\phi_{\hat k_1+1}}\circ\frac{d}{dt}\circ e^{\phi_{\hat k_1+1}}\big)\hat f(t) \equiv 0
   \;\; \mbox{for all}\;\; t\in\Big(\hat t_0-\frac{d_C}{2}, \hat t_0+\frac{d_C}{2}\Big),
\end{equation*}
from which, it follows  that
\begin{equation*}
   \frac{d}{dt}\big(e^{\phi_{\hat k_1+1}(t)}\hat f(t)\big) \equiv 0
   \;\; \mbox{for all}\;\; t\in\Big(\hat t_0-\frac{d_C}{2}, \hat t_0+\frac{d_C}{2}\Big).
\end{equation*}
This implies that
\begin{equation}\label{s2-7}
   e^{\phi_{\hat k_1+1}(t)}\hat f(t) \equiv \;\mbox{const}
   \;\; \mbox{for all}\;\; t\in\Big(\hat t_0-\frac{d_C}{2}, \hat t_0+\frac{d_C}{2}\Big).
\end{equation}
Meanwhile, by (\ref{s2-6}), (\ref{s2-4}) and Step\;1 (where $k = \hat k_1$), we get that
$\hat f(\hat \tau) = 0$ for some $\hat \tau \in (\hat t_0 - \frac{d_C}{2}, \hat t_0 + \frac{d_C}{2}).$ This, along with (\ref{s2-7}), indicates that
\begin{equation*}
  \hat f \equiv 0\;\; \mbox{over}\;\; \Big(\hat t_0-\frac{d_C}{2}, \hat t_0+\frac{d_C}{2}\Big).
\end{equation*}
This, along with  (\ref{s2-6}), leads to (\ref{s2-1}) with $k=\hat k_1$.
 By (\ref{s2-1}), with $k=\hat k_1$,  and by the definition of $S$ (see (\ref{s2-3})), we find that $\hat k_1 \notin S$, which contradicts the definition of $\hat k_1$ (see (\ref{s2-4})). Therefore, (\ref{s2-1}) holds for all $k\in \{1, \ldots, n\}$.

Finally, by (\ref{s2-1}), with $k=1$,  we have that
 \begin{equation*}
   \big(e^{\phi_{1}}\circ\frac{d}{dt}\circ e^{\phi_{1}}\big)f \equiv 0
   \;\; \mbox{over}\; \Big(\hat t_0 - \frac{d_C}{2}, \hat t_0 + \frac{d_C}{2}\Big),
\end{equation*}
 from which, it follows that
 \begin{equation*}
   e^{\phi_{1}} f \equiv \mbox{const}
   \;\; \mbox{over}\; \Big(\hat t_0-\frac{d_C}{2}, \hat t_0+\frac{d_C}{2}\Big).
\end{equation*}
This, along with  the second equality in (\ref{lo-1-2}), indicates that
\begin{equation}\label{lo-1-2-1}
    f \equiv 0
   \;\; \mbox{over}\; \Big(\hat t_0-\frac{d_C}{2}, \hat t_0+\frac{d_C}{2}\Big).
\end{equation}
At same time, since $f$ is a solution to the equation: $g(\frac{d}{dt})f=0$,  we see that $f$ is analytic over $\mathbb R$. This, along with (\ref{lo-1-2-1}), leads to (\ref{lo-1-3}).

In summary, we end the proof of Lemma~\ref{lo-1}.
\end{proof}

Now we are  on the position to prove Theorem \ref{th-o-1}.
\begin{proof}[Proof of Theorem \ref{th-o-1}]
 When $n=1$,  one can easily check, from (\ref{th-1-1}),  that $\mbox{rank}\; B =1$. Then  we find that for each $t>0$,
$\mbox{rank}\; e^{At}B = \mbox{rank}\; B =1,$ which leads to (\ref{th-o-1-2}) for the case that $n=1$.

We now show  (\ref{th-o-1-2}) for the case that $n > 1$. Let $g(\lambda) = \lambda^n + \sum_{i=0}^{n-1}a_i\lambda^i,\; \lambda \in \mathbb C$,  be the characteristic polynomial of the matrix $A$. Write
\begin{eqnarray}\label{to-16-1}
\hat A = \left(
                        \begin{array}{ccccc}
                          0 & 0 & \cdots & 0 & -a_0\\
                          1 & 0 & \cdots & 0 & -a_1\\
                          0 & 1 & \cdots & 0 & -a_2\\
                          \vdots & \vdots & \ddots & \vdots & \vdots\\
                          0 & 0 & \cdots & 1 & -a_{n-1}\\
                        \end{array}
                      \right).
\end{eqnarray}
Consider the equation:
\begin{equation}\label{lm-2-4}
   \frac{d}{dt}  \left(
                        \begin{array}{c}
                          f_0(t)\\
                          \vdots\\
                          f_{n-1}(t)\\
                        \end{array}
                      \right)  = \hat A \left(
                        \begin{array}{c}
                          f_0(t)\\
                          \vdots\\
                          f_{n-1}(t)\\
                        \end{array}
                      \right),
\;\; t \in \mathbb R,
\end{equation}
with the  initial value condition:
\begin{eqnarray}\label{lm-2-5}
  (f_0(0),   f_1(0), \dots,  f_n(0))^\top=(1, 0, \dots, 0)^\top\triangleq e_1.
 \end{eqnarray}
Let $(\hat f_0, \hat f_1, \dots, \hat f_{n-1})^\top \in C^\infty(\mathbb R^1; \mathbb R^n)$ be the solution of (\ref{lm-2-4})-(\ref{lm-2-5}).

The rest proof of this theorem is divided into the following three steps:

\vskip5pt
\textit{Step 1. We  show that
\begin{equation}\label{o-17-1}
  e^{At} = \sum_{i=0}^{n-1}\hat f_i(t)A^i \;\;\mbox{for all}\;\; t \in \mathbb R.
\end{equation}}

Define a matrix-valued function in the following manner:
\begin{equation}\label{lm-2-6}
  \Phi(t) \triangleq \sum_{i=0}^{n-1}\hat f_i(t)A^i \;\;\mbox{for all}\;\; t \in \mathbb R.
\end{equation}
 By (\ref{lm-2-6}), (\ref{lm-2-4}) and (\ref{to-16-1}), we find that for all $t\in\mathbb R$,
\begin{eqnarray}\label{lm-2-7}
  \frac{d}{dt}\Phi(t) = \sum_{i=0}^{n-1}\frac{d}{dt}\hat f_i(t)A^i
 &=& -a_0\hat f_{n-1}(t) + \sum_{i=1}^{n-1}\big(\hat f_{i-1}(t) - a_i\hat f_{n-1}(t)\big)A^i
 \nonumber\\
 &=& \sum_{i=1}^{n-1}\hat f_{i-1}(t)A^i - \Big(\sum_{i=0}^{n-1}a_iA^i\Big)\hat f_{n-1}(t).
\end{eqnarray}
 Since $g(\lambda) = \lambda^n + \sum_{i=0}^{n-1}a_i\lambda^i,\; \lambda \in \mathbb C$, is the characteristic polynomial of the matrix $A$, it follows  by  the Cayley-Hamilton Theorem, (\ref{lm-2-7}) and (\ref{lm-2-6}) that for each $t \in \mathbb R$,
\begin{eqnarray}\label{lm-2-7-1}
  \frac{d}{dt}\Phi(t) &=& \sum_{i=1}^{n-1}\hat f_{i-1}(t)A^i - (g(A) - A^n)\hat f_{n-1}(t)
  = \sum_{i=1}^{n}\hat f_{i-1}(t)A^i
\nonumber\\
  &=& A\sum_{i=1}^{n}\hat f_{i-1}(t)A^{i-1} = A\sum_{i=0}^{n-1}\hat f_i(t)A^i = A\Phi(t).
\end{eqnarray}
Meanwhile, by (\ref{lm-2-6}) and (\ref{lm-2-5}), we find that
\begin{eqnarray}\label{lm-2-8}
  \Phi(0) = I.
\end{eqnarray}
Now, (\ref{o-17-1}) follows from  (\ref{lm-2-7-1}), (\ref{lm-2-8}) and (\ref{lm-2-6}).  This ends the proof of Step\;1.

\textit{Step 2. We prove that for each increasing sequence $\{\tau_k\}_{k=1}^n \subset \mathbb R$, with $\tau_n-\tau_1<d_A,$
\begin{equation}\label{o-1}
  \mbox{rank}\;\left(
                        \begin{array}{cccc}
                          \hat f_0(\tau_1)  & \cdots & \hat f_0(\tau_n)\\
                          \vdots  & \ddots & \vdots\\
                          \hat f_{n-1}(\tau_1) & \cdots & \hat f_{n-1}(\tau_n)\\
                        \end{array}
                      \right)=n.
\end{equation}}

Arbitrarily take an increasing sequence $\{\tau_k\}_{k=1}^n \subset \mathbb R$ so that
\begin{equation}\label{o-17-2}
  \tau_n-\tau_1<d_A.
\end{equation}
To show (\ref{o-1}), it suffices to prove that the following equation
has a unique solution $\mathbf{x}=0$ in $\mathbb R^n$:
\begin{equation}\label{o-2}
  \mathbf{x}^\top \left(
                        \begin{array}{cccc}
                          \hat f_0(\tau_1) & \cdots & \hat f_0(\tau_n)\\
                          \vdots & \ddots & \vdots\\
                          \hat f_{n-1}(\tau_1) & \cdots & \hat f_{n-1}(\tau_n)\\
                        \end{array}
                      \right) = 0.
\end{equation}
 For this purpose, we  let $\mathbf{x} \in \mathbb R^n$ be a solution to  (\ref{o-2}).
Since $(\hat f_0, \hat f_1, \dots, \hat f_{n-1})^\top$ solves (\ref{lm-2-4})-(\ref{lm-2-5}), we find from (\ref{o-2}) that
\begin{equation}\label{o-2-1}
  \mathbf{x}^\top e^{\hat A \tau_k}e_1 = \mathbf{x}^\top\left(
                        \begin{array}{c}
                         \hat f_0(\tau_k)\\
                          \vdots\\
                         \hat f_{n-1}(\tau_k)\\
                        \end{array}
                      \right) = 0,
  \;\; \forall \; k \in \{1,\ldots, n\}.
\end{equation}
Let $\hat{\mathbf{h}}\triangleq (\hat h_0, \cdots, \hat h_{n-1})^\top \in C^\infty (\mathbb R; \mathbb R^n)$ satisfy that
 \begin{equation}\label{o-17-6}
   \left\{\begin{array}{ll}
        &\dfrac{d}{dt} \hat{\mathbf{h}}(t)=\hat A^\top \hat{\mathbf{h}}(t),\;\;  t\in \mathbb R,\\
        &\hat{\mathbf{h}}(0)=\mathbf x.
       \end{array}
\right.
 \end{equation}
 Since $g(\lambda)=\lambda^n+\sum_{i=0}^{n-1}a_i\lambda^i,\; \lambda \in \mathbb C$,
  it follows by (\ref{o-17-6}) and (\ref{to-16-1}) that
\begin{equation}\label{o-17-4}
  \hat{\mathbf h}=(\hat h_0, \frac{d}{dt}\hat h_0, \cdots, \frac{d^{n-1}}{dt^{n-1}}\hat h_0)^\top
\;\;\mbox{and}\;\;
g(\frac{d}{dt}) {\hat h}_0 =0.
\end{equation}
Meanwhile, by (\ref{o-17-6}) and (\ref{o-2-1}), we get that for each $k \in \{1,\ldots, n\}$,
\begin{eqnarray}\label{o-5}
  \hat h_0(\tau_k) = \big<\hat {\mathbf h}(\tau_k), e_1\big>_{\mathbb R^n}
  = \big<e^{\hat A^\top \tau_k}\mathbf{x}, e_1\big>_{\mathbb R^n}
  = {\mathbf x}^\top e^{\hat A \tau_k}e_1 =0.
\end{eqnarray}
Since $g(\cdot)$ is the characteristic polynomial of the matrix $A$, by the second equality in (\ref{o-17-4}),
(\ref{o-5}) and (\ref{o-17-2}), we can apply Lemma~\ref{lo-1}, to obtain that
$\hat h_0 \equiv 0$ over $\mathbb R$. This, together with the first equality in (\ref{o-17-4}), yields that
$\hat{\mathbf h} \equiv 0$ over $\mathbb R$,
from which, as well as (\ref{o-17-6}), it follows that  $\mathbf x=0$, i.e., (\ref{o-2}) has a unique solution $\mathbf x=0$.  Hence, (\ref{o-1}) is true.

\vskip5pt
\textit{Step 3. We prove (\ref{th-o-1-2}).}

Arbitrarily take an increasing sequence $\{\tau_k\}_{k=1}^n \subset \mathbb R$ so that
\begin{equation}\label{th-o-17-1}
  \tau_n-\tau_1<d_A.
\end{equation}
 To prove (\ref{th-o-1-2}), it suffices to show that the following equation
 has a unique solution $\alpha=0$:
\begin{equation}\label{lm-2-10}
  \alpha^\top(e^{A\tau_1}B, e^{A\tau_2}B, \ldots, e^{A\tau_{n}}B) = 0.
\end{equation}
 For this purpose, we let  $\alpha\in \mathbb R^n$ satisfy (\ref{lm-2-10}). Then, by Step\;1, we see that for each $k \in \{1, \ldots, n\}$,
\begin{eqnarray*}
&&\big(B^*\alpha, B^*A^*\alpha, \ldots, B^*(A^*)^{n-1}\alpha\big)
\left(
                        \begin{array}{cccc}
                          \hat f_0(\tau_k) \\
                          \vdots \\
                          \hat f_{n-1}(\tau_k) \\
                        \end{array}
                      \right)
\\
&=& \sum_{i=0}^{n-1}\hat f_i(\tau_k)B^*(A^*)^i\alpha
= B^*\big(\sum_{i=0}^{n-1}\hat f_i(\tau_k)A^i\big)^*\alpha
=B^*e^{A^*\tau_k}\alpha = (\alpha^\top e^{A\tau_k}B)^\top = 0.
\end{eqnarray*}
From this, we get that
\begin{equation*}
 \big(B^*\alpha, B^*A^*\alpha, \ldots, B^*(A^*)^{n-1}\alpha\big)
 \left(
                        \begin{array}{cccc}
                          \hat f_0(\tau_1) & \cdots & \hat f_0(\tau_n)\\
                          \vdots & \ddots & \vdots\\
                          \hat f_{n-1}(\tau_1) & \cdots & \hat f_{n-1}(\tau_n)\\
                        \end{array}
                      \right) = 0.
\end{equation*}
This, along with Step\;2 and (\ref{th-o-17-1}), yields that
\begin{equation*}
   B^*(A^*)^{k-1}\alpha=0\;\;\mbox{for each}\;\;k \in \{1, \ldots, n\},
\end{equation*}
from which, it follows  that
\begin{eqnarray*}
   \alpha^\top(B, AB, A^2B, \ldots, A^{n-1}B) &=& (\alpha^\top B, \alpha^\top AB, \alpha^\top A^2B, \ldots, \alpha^\top A^{n-1}B) \\
   &=&
  \left(
                        \begin{array}{cccc}
                          B^*\alpha \\
                         B^*A^*\alpha \\
                          \vdots \\
                         B^*(A^*)^{n-1}\alpha\\
                        \end{array}
                      \right)^\top = 0.
\end{eqnarray*}
Since $(A, B)$ satisfies (\ref{th-1-1}), the above yields that $\alpha=0$. Thus,  (\ref{lm-2-10}) only has the trivial solution in $\mathbb R^n$. Therefore, (\ref{th-o-1-2}) is true.

\vskip 5pt
In summary, we end the proof of Theorem \ref{th-o-1}.
\end{proof}

We are now on the position to prove Theorem~\ref{theoremwang2.2}.
\begin{proof}[Proof of Theorem \ref{theoremwang2.2}]
Let $T>0$. Let $\{\tau_k\}_{k=1}^p\subset(0,T)$, with $p\in \mathbb{N}^+$, be an increasing sequence. First of all, for each $z_0 \in \mathbb R^n$ and $\{u_k\}_{k=1}^p\subset \mathbb R^m$, the solution of (\ref{od-1}) satisfies that
\begin{eqnarray}\label{od-1-1}
z(T; z_0, \{\tau_k\}_{k=1}^p, \{u_k\}_{k=1}^p) = e^{AT}z_0 + \sum_{1\leq k\leq p} e^{ A(T-\tau_k)}Bu_k,
\end{eqnarray}
where $z(\cdot; z_0,\{\tau_k\}_{k=1}^{p}, \{u_k\}_{k=1}^{p} )$ denotes the solution of (\ref{od-1}).

We next prove the sufficiency. Assume that (\ref{wanggsgs2.2}) is true. Arbitrarily fix $\hat z_0, \hat z_1 \in \mathbb R^n$. By (\ref{wanggsgs2.2}), we find that there exists $\{\hat{ u}_k\}_{k=1}^p \subset \mathbb R^m$ so that
\begin{equation*}
  \sum_{k=1}^p e^{ A(T-\tau_k)}B\hat u_k = \hat z_1 - e^{AT}\hat z_0.
\end{equation*}
From this and (\ref{od-1-1}), we see that
\begin{equation*}
   z(T; \hat z_0, \{\tau_k\}_{k=1}^p, \{\hat u_k\}_{k=1}^p) = \hat z_1.
\end{equation*}
Since $\hat z_0, \hat z_1 $ were arbitrarily taken in $\mathbb R^n$, the above, along with the definition in the note (b) in Section 1, implies that the exact controllability for (\ref{od-1}) over $[0,T]$ can be realized at $\{\tau_k\}_{k=1}^p$. This proves the sufficiency.

Finally, we show the necessity. Assume that the exact controllability for (\ref{od-1}) over $[0,T]$ can be realized at $\{\tau_k\}_{k=1}^p$. Then, by the definition in the note (b) in Section 1 and (\ref{od-1-1}), we get that for each $z_1 \in \mathbb R^n$, there exists $\{u_k\}_{k=1}^p \subset \mathbb R^m$ so that
\begin{eqnarray}\label{ode-2}
z_1&=&z(T; 0, \{\tau_k\}_{k=1}^p, \{u_k\}_{k=1}^p)
= \sum_{1\leq k\leq p} e^{A(T-\tau_k)}Bu_k\\
\nonumber
&\in& \mbox{Range}\;(e^{A(T-\tau_1)}B, \ldots, e^{A(T-\tau_n)}B).
\end{eqnarray}
This indicates that
\begin{equation*}
  \mathbb R^n \subset \mbox{Range}\;(e^{A(T-\tau_1)}B, \ldots, e^{A(T-\tau_n)}B).
\end{equation*}
Thus, (\ref{wanggsgs2.2}) is true, which leads to the necessity.

In summary, we end the proof of this theorem.
\end{proof}

\section{Unique continuation for system of heat equations}

Some connections among  $\{e^{\mathcal At}\}_{t\geq 0}$,
$\{e^{-At}\}_{t\geq 0}$ and $\{e^{\mathbf{\Delta} t}\}_{t\geq 0}$ (which denotes the $C_0$-semigroup generated by $\mathbf{\Delta}$ over $L^2(\Omega; \mathbb R^n)$) are given in the next Proposition~\ref{lm-5}.

\begin{proposition}\label{lm-5}
The following two equalities hold for all $t\geq 0$:
\begin{equation}\label{lm-5-1}
   e^{\mathcal A^* t} = e^{-A^*t}e^{\mathbf{\Delta} t}
   \;\;\mbox{and}\;\;
   e^{\mathcal A t} = e^{\mathbf{\Delta} t}e^{-At}.
\end{equation}
\end{proposition}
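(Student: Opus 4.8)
The plan is to establish the second identity $e^{\mathcal A t} = e^{\mathbf{\Delta} t}e^{-At}$ first and then deduce the first one by passing to adjoints. The starting point is the observation that the matrix $A$, viewed as the bounded operator $\mathbf{z}\mapsto A\mathbf{z}$ on $L^2(\Omega;\mathbb R^n)$, commutes with $\mathbf{\Delta}$ in the strong sense: since $A$ has constant entries, it maps $D(\mathbf{\Delta})=H^2(\Omega;\mathbb R^n)\cap H^1_0(\Omega;\mathbb R^n)$ into itself and $\mathbf{\Delta}(A\mathbf{z})=A(\mathbf{\Delta}\mathbf{z})$ for every $\mathbf{z}\in D(\mathbf{\Delta})$. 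From this, by the resolvent identity (the resolvents of $\mathbf{\Delta}$ commute with $A$), one gets that $e^{\mathbf{\Delta} t}$ commutes with $A$, and hence with the uniformly continuous group $\{e^{-At}\}_{t\in\mathbb R}$ acting as multiplication by the matrix exponential $e^{-At}$.

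Next I would set $S(t)\triangleq e^{\mathbf{\Delta} t}e^{-At}$ for $t\ge 0$. Using the commutation just established together with the semigroup law for $\{e^{\mathbf{\Delta} t}\}$ and the group law for $\{e^{-At}\}$, one checks $S(t+s)=S(t)S(s)$ and $S(0)=I$; strong continuity of $S(\cdot)$ follows from that of both factors, so $\{S(t)\}_{t\ge 0}$ is a $C_0$-semigroup on $L^2(\Omega;\mathbb R^n)$. To identify its generator, fix $\mathbf{z}\in D(\mathbf{\Delta})=D(\mathcal A)$ and differentiate $t\mapsto e^{\mathbf{\Delta} t}e^{-At}\mathbf{z}$ at $t=0$: writing $S(t)\mathbf{z}-\mathbf{z}=e^{\mathbf{\Delta} t}(e^{-At}\mathbf{z}-\mathbf{z})+(e^{\mathbf{\Delta} t}\mathbf{z}-\mathbf{z})$, dividing by $t$ and letting $t\to 0^+$ (using the uniform boundedness of $e^{\mathbf{\Delta} t}$ near $0$) gives $\frac{d}{dt}\big|_{t=0}S(t)\mathbf{z}=\mathbf{\Delta}\mathbf{z}-A\mathbf{z}=\mathcal A\mathbf{z}$. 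Thus the generator of $\{S(t)\}$ extends $\mathcal A$; since $\mathcal A$ is itself the generator of a $C_0$-semigroup (as recalled after \eqref{heat_1}), and an extension of one generator by another forces equality of the domains, the two generators, hence the two semigroups, coincide. This is the second equality in \eqref{lm-5-1}.

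Finally, the first identity follows by taking Hilbert-space adjoints in $L^2(\Omega;\mathbb R^n)$: $e^{\mathcal A^* t}=(e^{\mathcal A t})^*=(e^{\mathbf{\Delta} t}e^{-At})^*=(e^{-At})^*(e^{\mathbf{\Delta} t})^*$. Here $(e^{\mathbf{\Delta} t})^*=e^{\mathbf{\Delta} t}$ since $\mathbf{\Delta}$ with Dirichlet boundary conditions is self-adjoint, and $(e^{-At})^*=e^{-A^*t}$ since the adjoint of multiplication by the matrix $e^{-At}$ is multiplication by its transpose $e^{-A^\top t}=e^{-A^*t}$; combining these yields $e^{\mathcal A^* t}=e^{-A^*t}e^{\mathbf{\Delta} t}$.

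I expect the only genuinely delicate point to be the first one, namely upgrading the commutation of the \emph{generators} $\mathbf{\Delta}$ and (multiplication by) $A$ to the commutation of the \emph{semigroup} $e^{\mathbf{\Delta} t}$ with $A$, which is what makes $S(\cdot)$ a semigroup; everything after that (strong continuity, the product-rule computation of the generator, and the adjoint bookkeeping) is routine. As an alternative to the generator argument one may instead invoke uniqueness for the abstract Cauchy problem $\mathbf{y}'=\mathcal A\mathbf{y}$: for $\mathbf{y}_0\in D(\mathcal A)$ both $e^{\mathcal A t}\mathbf{y}_0$ and $e^{\mathbf{\Delta} t}e^{-At}\mathbf{y}_0$ are classical solutions with the same initial datum, hence agree, and density of $D(\mathcal A)$ together with the boundedness of the operators extends the identity to all of $L^2(\Omega;\mathbb R^n)$.
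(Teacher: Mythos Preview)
Your proof is correct and rests on the same two ingredients as the paper's: the commutation of the constant matrix $A$ with $\mathbf{\Delta}$, and uniqueness of the semigroup generated by $\mathcal A$ (resp.\ $\mathcal A^*$). The organizational differences are minor. The paper proves the \emph{first} identity directly, by fixing $\mathbf{z}\in L^2(\Omega;\mathbb R^n)$, setting $\mathbf{h}_{\mathbf{z}}(t)=e^{-A^*t}e^{\mathbf{\Delta} t}\mathbf{z}$, and checking by hand that $\mathbf{h}_{\mathbf{z}}$ solves $\frac{d}{dt}\mathbf{h}_{\mathbf{z}}=\mathcal A^*\mathbf{h}_{\mathbf{z}}$ with $\mathbf{h}_{\mathbf{z}}(0)=\mathbf{z}$; the second identity then follows by taking adjoints. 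This is exactly the ``alternative'' you sketch at the end. Your primary route---showing $S(t)=e^{\mathbf{\Delta} t}e^{-At}$ is a $C_0$-semigroup and identifying its generator with $\mathcal A$ via the no-proper-extension property of generators---is a slightly more abstract packaging of the same computation, and it buys you nothing extra here beyond what the direct ODE verification already gives.
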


\begin{proof}
 Arbitrarily fix $\mathbf{z} \in L^2(\Omega; \mathbb R^n)$. One can easily  check that  the function $t \mapsto  e^{\mathbf{\Delta} t}\mathbf{z}$, $t > 0$, belongs to the following space:
\begin{equation}\label{lm-5-5}
   C^1\big((0,\infty);L^2(\Omega; \mathbb R^n)\big) \cap C\big((0, \infty); H^2(\Omega; \mathbb R^n)\cap H^1_0(\Omega; \mathbb R^n)\big).
\end{equation}
Define
\begin{equation}\label{lm-5-3}
  \mathbf{h}_\mathbf{z}(t) \triangleq e^{-A^*t}e^{\mathbf{\Delta} t}\mathbf{z}, \;\; t \geq 0.
\end{equation}
By (\ref{lm-5-3}) and (\ref{lm-5-5}), we get that
\begin{equation}\label{lm-5-2}
  \mathbf{h}_\mathbf{z} \in  C^1\big((0,\infty);L^2(\Omega; \mathbb R^n)\big) \cap C\big((0, \infty); H^2(\Omega; \mathbb R^n)\cap H^1_0(\Omega; \mathbb R^n)\big).
\end{equation}
Since  $\mathcal A^*=\mathbf{\Delta}-A^* $ and because $\mathbf{\Delta}$  is commutative with the operator $A^*$, it follows from (\ref{lm-5-3}) that for each $t>0$,
\begin{eqnarray}\label{lm-5-4}
\frac{d}{dt} \mathbf{h}_\mathbf{z} - \mathcal A^*\mathbf{h}_\mathbf{z}
&=&\frac{d}{dt} \mathbf{h}_\mathbf{z} - \mathbf{\Delta} \mathbf{h}_\mathbf{z} + A^*\mathbf{h}_\mathbf{z}\nonumber\\
&=&\Big[-A^*e^{-A^*t}e^{\mathbf{\Delta} t}\mathbf{z} + e^{-A^*t}\frac{d}{dt}e^{\mathbf{\Delta} t}\mathbf{z}\Big]
- \mathbf{\Delta} e^{-A^*t}e^{\mathbf{\Delta} t}\mathbf{z} + A^*e^{-A^*t}e^{\mathbf{\Delta} t}\mathbf{z}
\nonumber\\
 &=& e^{-A^*t}\Big[\frac{d}{dt}e^{\mathbf{\Delta} t}\mathbf{z} - \mathbf{\Delta} e^{\mathbf{\Delta} t}\mathbf{z}\Big] =0.
\end{eqnarray}
Meanwhile, we observe from (\ref{lm-5-3}) that
$\mathbf{h}_\mathbf{z}(0) = \mathbf{z}$.
This, along with (\ref{lm-5-4}), yields that
\begin{equation*}
  \mathbf{h}_\mathbf{z}(t) = e^{\mathcal A^*t}\mathbf{z}, \;\; t \geq 0,
\end{equation*}
which, together with  (\ref{lm-5-3}), leads to the first equality in (\ref{lm-5-1}).

 By taking the adjoint on  both sides of   the first equality in (\ref{lm-5-1}), we obtain the second equality in (\ref{lm-5-1}). This ends the proof of Proposition~\ref{lm-5}.
\end{proof}

The next Proposition \ref{lm-6} presents unique continuation property  for the
 system of heat equations. The key to proving this proposition is the use of the unique continuation
 property at one point in time for the heat equation.
  This property says that if a solution $y$ of the heat equation with the homogeneous Dirichelt
  boundary condition satisfies that for some $\tau>0$, $y(x,\tau)=0$ for a.e. $x\in \omega$,
  then $y\equiv 0$. (See, for instance,  \cite{Lin}, \cite{PW}  and \cite{PWZ}).

\begin{proposition}\label{lm-6}
Let $\omega_1$ be an open and nonempty subset of $\Omega$ and let $V$ be a subspace of $\mathbb R^n$.   Then the following two conclusions are true:

(i)  If $\mathbf{z} \in L^2(\Omega; \mathbb R^n)$
 and $\tau>0$, then
\begin{equation*}
  \mathbf{z}(x) \in V\;\; \mbox{for a.e.}\;\; x\in \Omega\Longleftrightarrow
  e^{\mathbf{\Delta} \tau}\mathbf{z}(x) \in V \;\; \mbox{for all}\;\; x \in \omega_1.
\end{equation*}

 (ii) If $\mathbf{z} \in L^2(\Omega; \mathbb R^n)$
 and $\tau>0$, then
  $\chi_{\omega_1}e^{\mathcal A \tau}\mathbf{z} = 0$ if and only if $\mathbf{z}=0$.
\end{proposition}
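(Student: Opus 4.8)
The plan is to reduce everything to the scalar unique continuation at one point in time for the heat equation (recalled just before the statement) together with the factorization $e^{\mathcal A t}=e^{\mathbf{\Delta}t}e^{-At}$ from Proposition~\ref{lm-5}. The only auxiliary ingredient is the smoothing of the heat semigroup: for $\tau>0$ and $\mathbf{z}\in L^2(\Omega;\mathbb R^n)$ the function $e^{\mathbf{\Delta}\tau}\mathbf{z}$ is continuous on $\Omega$ (interior parabolic regularity), so the set $\{x\in\omega_1:e^{\mathbf{\Delta}\tau}\mathbf{z}(x)\in V\}$ is relatively closed in $\omega_1$; being finite-dimensional, $V$ is closed, and if this set has full measure in $\omega_1$ it is dense, hence equals $\omega_1$. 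In other words, a membership ``$\in V$'' that holds a.e.\ on $\omega_1$ in fact holds at every point of $\omega_1$; this lets us move freely between the ``a.e.'' and ``for all'' formulations.

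For (i), let $P\in\mathbb R^{n\times n}$ be the orthogonal projection of $\mathbb R^n$ onto $V^\perp$, viewed as a bounded operator on $L^2(\Omega;\mathbb R^n)$ acting on the components. Exactly as the matrix $A^*$ commutes with $\mathbf{\Delta}$ in the proof of Proposition~\ref{lm-5} (since $\mathbf{\Delta}$ acts diagonally while $P$ acts only across components, and $P$ preserves $H^2(\Omega;\mathbb R^n)\cap H^1_0(\Omega;\mathbb R^n)$), we get $Pe^{\mathbf{\Delta}\tau}=e^{\mathbf{\Delta}\tau}P$. Then ``$\mathbf{z}(x)\in V$ a.e.'' is equivalent to $P\mathbf{z}=0$ a.e., and, by the continuity remark, ``$e^{\mathbf{\Delta}\tau}\mathbf{z}(x)\in V$ for all $x\in\omega_1$'' is equivalent to $(e^{\mathbf{\Delta}\tau}P\mathbf{z})(x)=0$ for all $x\in\omega_1$. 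So it suffices to prove $P\mathbf{z}=0$ a.e.\ $\iff$ $e^{\mathbf{\Delta}\tau}(P\mathbf{z})=0$ on $\omega_1$. The direction ``$\Rightarrow$'' is immediate. For ``$\Leftarrow$'', write $P\mathbf{z}=(v_1,\dots,v_n)^\top$ with $v_i\in L^2(\Omega;\mathbb R)$; the $i$-th component of $e^{\mathbf{\Delta}\tau}(P\mathbf{z})$ is $e^{\Delta\tau}v_i$, where $e^{\Delta\tau}$ is the scalar Dirichlet heat semigroup, so $(x,t)\mapsto(e^{\Delta t}v_i)(x)$ solves the heat equation with homogeneous Dirichlet condition and vanishes at $t=\tau$ on $\omega_1$; the quoted unique continuation forces it to vanish identically, and strong continuity at $t=0$ gives $v_i=0$. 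Hence $P\mathbf{z}=0$ a.e., proving (i).

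For (ii), the implication ``$\Leftarrow$'' is trivial. For ``$\Rightarrow$'', use Proposition~\ref{lm-5} to write $\chi_{\omega_1}e^{\mathcal A\tau}\mathbf{z}=\chi_{\omega_1}e^{\mathbf{\Delta}\tau}(e^{-A\tau}\mathbf{z})$, so the hypothesis reads $e^{\mathbf{\Delta}\tau}(e^{-A\tau}\mathbf{z})=0$ a.e.\ on $\omega_1$, hence everywhere on $\omega_1$ by continuity. Applying part (i) with $V=\{0\}$ and with $e^{-A\tau}\mathbf{z}$ in place of $\mathbf{z}$ yields $e^{-A\tau}\mathbf{z}=0$ a.e.\ in $\Omega$; since the matrix $e^{-A\tau}$ is invertible, $\mathbf{z}=0$.

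I do not expect a genuine obstacle here: the substance is entirely supplied by the cited scalar one-time unique continuation, and the two points needing a little care are the commutation $Pe^{\mathbf{\Delta}\tau}=e^{\mathbf{\Delta}\tau}P$ (handled precisely as in Proposition~\ref{lm-5}) and the upgrade from ``a.e.\ on $\omega_1$'' to ``everywhere on $\omega_1$'', which rests on parabolic smoothing. If one prefers to avoid the continuity upgrade, one can instead apply the unique continuation result directly to each component of $P\mathbf{z}$ using only the a.e.\ vanishing hypothesis, which is exactly the form in which that result is stated.
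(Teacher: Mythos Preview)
Your proof is correct and follows essentially the same route as the paper: both arguments reduce to the scalar one-time unique continuation by testing against $V^\perp$ (the paper pairs with an arbitrary $\alpha\in V^\perp$ and applies unique continuation to the scalar function $\psi_\alpha(x,t)=\langle e^{\mathbf{\Delta}t}\mathbf{z}(x),\alpha\rangle$, while you package this as the projection $P$ onto $V^\perp$ and use the commutation $Pe^{\mathbf{\Delta}\tau}=e^{\mathbf{\Delta}\tau}P$), and part~(ii) is handled identically via Proposition~\ref{lm-5} and part~(i) with $V=\{0\}$. Your explicit treatment of the ``a.e.\ on $\omega_1$'' versus ``everywhere on $\omega_1$'' passage through parabolic smoothing is a point the paper leaves implicit but also relies on.
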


\begin{proof}
(i) Arbitrarily fix $\mathbf{z} \in L^2(\Omega; \mathbb R^n)$ and $\tau>0$.
 Let
 $\Psi(t)\triangleq e^{\mathbf{\Delta} t}\mathbf{z}$ for each $t\geq 0$.
 Then $\Psi$ satisfies that
 \begin{eqnarray}\label{lm-6-1}
\left\{\begin{array}{lll}
         \partial_t \Psi - \mathbf{\Delta} \Psi= 0  & \mbox{in} &  \mathbb R^+ \times \Omega,\\
         \Psi = 0 & \mbox{on} &  \mathbb R^+ \times \partial\Omega,\\
         \Psi(0) = \mathbf{z} & \mbox{in} &\Omega.
       \end{array}
\right.
\end{eqnarray}

We organize the rest of the proof by the following two steps:

  \textit{Step 1. We show that if  $\mathbf{z}(x) \in V$ for a.e. $x\in \Omega$, then
  $e^{\mathbf{\Delta} \tau}\mathbf{z}(x) \in V $ for all $x \in \omega_1$
  }

   Suppose that
 \begin{equation}\label{lm-6-2}
   \mathbf{z}(x) \in V \;\; \mbox{for a.e.}\;\; x\in \Omega.
 \end{equation}
 We aim to prove that
 \begin{equation}\label{lm-6-17-1}
   e^{\mathbf{\Delta} \tau}\mathbf{z}(x) \in V \;\; \mbox{for all}\;\; x \in \omega_1.
 \end{equation}
   Arbitrarily fix $\mathbf{\alpha}$ in $V^\bot$ which is  the orthogonal complement space of $V$ in $\mathbb R^n$.  Then  define a function
 $\psi_\alpha: \Omega\times [0,\infty)\rightarrow \mathbb{R}$
 in the following manner:
 \begin{equation}\label{lm-6-3}
  \psi_\alpha(x,t)\triangleq \langle e^{\mathbf{\Delta} t}\mathbf{z}(x), \alpha\rangle_{\mathbb R^n}, \;\; (x,t)\in \Omega \times [0, \infty).
 \end{equation}
By (\ref{lm-6-3}) and (\ref{lm-6-1}), one can directly check that  $\psi_\alpha$ satisfies  that
\begin{eqnarray}\label{lm-6-4}
\left\{\begin{array}{lll}
         \partial_t \psi_\alpha - \Delta \psi_\alpha = 0  & \mbox{in} & \Omega \times \mathbb R^+,\\
         \psi_\alpha = 0 & \mbox{on} &\partial\Omega \times \mathbb R^+.
       \end{array}
\right.
\end{eqnarray}
Meanwhile, since $\mathbf{\alpha} \in V^\bot$, it follows from   (\ref{lm-6-3}) and (\ref{lm-6-2}) that $\psi_\alpha(x,0)=0$ for a.e. $x\in\Omega$. This, along with  (\ref{lm-6-4}), yields that $\psi_\alpha\equiv 0$ over $\Omega\times(0,\infty)$,
which, together with  (\ref{lm-6-3}), indicates that
\begin{equation*}
\langle e^{\mathbf{\Delta} \tau}\mathbf{z}(x), \alpha\rangle_{\mathbb R^n}=0\;\;\mbox{for all}\;\;x\in\Omega.
\end{equation*}
Since $\alpha$ was arbitrarily taken from $V^\bot$, the above leads to (\ref{lm-6-17-1}).

  \textit{Step 2. We show that if $e^{\mathbf{\Delta} \tau}\mathbf{z}(x) \in V \;\; \mbox{for each}\;\; x \in \omega_1$, then
  $\mathbf{z}(x) \in V$ for a.e. $x\in \Omega$}

We only need to consider the case that  $\mbox{dim}\, V < n$,
since when $\mbox{dim}\,V = n$, the desired result is clearly true.
Suppose  that
 \begin{equation}\label{lm-6-6}
   e^{\mathbf{\Delta} \tau}\mathbf{z}(x) \in V \;\; \mbox{for all}\;\; x\in \omega_1.
 \end{equation}
 Arbitrarily take $\alpha \in V^\bot$. Let $\psi_\alpha$ be the function defined by (\ref{lm-6-3}). Then from  (\ref{lm-6-3}) and (\ref{lm-6-6}), we see that
 \begin{equation}\label{wang2.23}
 \psi_\alpha(x, \tau) = 0\;\; \mbox{for all}\;\; x\in\omega_1.
 \end{equation}
Since $\psi_\alpha$ solves the adjoint heat equation with the zero Dirichlet boundary condition (see (\ref{lm-6-4})), by (\ref{lm-6-4}) and (\ref{wang2.23}), using the unique continuation property for  heat equations (see, for instance,   \cite{Lin}, \cite{PW} and \cite{PWZ}), we see that
$\psi_\alpha(x, 0)=0$ for a.e. $x\in\Omega$.
This, along with (\ref{lm-6-3}), yields that
\begin{equation*}
  \langle \mathbf{z}(x), \alpha\rangle_{\mathbb R^n} = 0\;\;\mbox{for a.e.}\;\; x\in\Omega.
\end{equation*}
Since  $\alpha$ was arbitrarily taken from the finitely dimensional subspace $V^\bot$,
the above leads to that $\mathbf{z}(x)\in V$ for a.e. $x\in\Omega$.

By the results in Step 1 and Step 2, we see that the conclusion (i) of this proposition is true.

\vskip 5pt
(ii) Arbitrarily fix $\mathbf{z} \in L^2(\Omega; \mathbb R^n)$ and $\tau>0$.
It is clear that  $\chi_{\omega_1} e^{\mathcal A\tau} \textbf{z}=0$, when $\textbf{z}=0$. To show the reverse, we suppose that
\begin{eqnarray}\label{pro-2-ii-1}
 \chi_{\omega_1} e^{\mathcal A\tau} \textbf{z}=0.
\end{eqnarray}
 From  (\ref{pro-2-ii-1}) and the second equality in Proposition \ref{lm-5}, we see that
\begin{eqnarray}\label{wang2.25}
\chi_{\omega_1} e^{\mathbf{\Delta} \tau} (e^{-A \tau}\textbf{z})=\chi_{\omega_1} e^{\mathcal A\tau} \textbf{z}=0.
\end{eqnarray}
By (\ref{wang2.25}), we can use the conclusion (i) (in this proposition), with $V=\{0\}$,
to get that  $e^{-A \tau}\textbf{z}(x)=0$  a.e. $x\in\Omega$. This yields that
$\textbf{z}(x)=0$ for a.e. $x\in\Omega$, since  $e^{-A \tau}$ is invertible.
 Hence, the conclusion (ii) is true.

In summary, we end the proof of Proposition~\ref{lm-6}.
\end{proof}

\bigskip

\section{Proof of Theorem \ref{th-1}}

The following formula will be frequently used in the rest of this paper:
 For each $T>0$, $\mathbf{y}_0 \in L^2(\Omega; \mathbb R^n)$, $p\in\mathbb N^+$, $\{\tau_k\}_{k=1}^p \subset (0,T)$ (with $\tau_1<\cdots<\tau_p$),
$\{\mathbf{u}_k\}_{k=1}^p \subset L^2(\Omega; \mathbb R^m)$ and $\mathbf{z }\in L^2(\Omega; \mathbb R^n)$,  it stands that
\begin{eqnarray}\label{lm-1-1}
  \big\langle \mathbf{y}(T; \mathbf{y}_0, \{\tau_k\}_{k=1}^p, \{\mathbf{u}_k\}_{k=1}^p), \mathbf{z} \big\rangle
   &=& \langle \mathbf{y}_0, e^{\mathcal A^*T} \mathbf{z}   \rangle\\
\nonumber
  & &+  \sum_{k=1}^p  \big\langle  \mathbf{u}_k, \chi_{\omega}B^*   e^{\mathcal A^*(T-\tau_k)} \mathbf{z} \big\rangle_{L^2(\Omega; \mathbb R^m)}.
\end{eqnarray}
The equality (\ref{lm-1-1})  follows directly from  (\ref{solution-formula}).
Before proving Theorem \ref{th-1}, we present three  lemmas.
 The first two lemmas will be used in the proof of Theorem \ref{th-1}, while the  last one will be used in the proofs of both Theorem \ref{th-1} and  Theorem \ref{th-2}.

\begin{lemma}\label{lm-8}
There exists a linear map $C$ from $\mathbb R^n$ to $\mathbb R^m$ so that
\begin{equation}\label{lm-8-1}
  B C \alpha = \alpha\;\;\mbox{for each}\;\; \alpha \in \mbox{Range}\,B.
\end{equation}
\end{lemma}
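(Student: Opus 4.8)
The plan is to build $C$ by prescribing its action on a basis of $\mbox{Range}\,B$ and extending it arbitrarily elsewhere. First I would set $r\triangleq\mbox{rank}\,B=\dim(\mbox{Range}\,B)$ and fix a basis $\alpha_1,\dots,\alpha_r$ of $\mbox{Range}\,B$. Since each $\alpha_i$ belongs to $\mbox{Range}\,B$, I may choose $\beta_i\in\mathbb R^m$ with $B\beta_i=\alpha_i$ for every $i\in\{1,\dots,r\}$.

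Next I would complete $\{\alpha_1,\dots,\alpha_r\}$ to a basis $\{\alpha_1,\dots,\alpha_n\}$ of $\mathbb R^n$ and define the linear map $C:\mathbb R^n\to\mathbb R^m$ by setting $C\alpha_i\triangleq\beta_i$ for $1\le i\le r$ and $C\alpha_i\triangleq 0$ for $r<i\le n$ (any prescribed values would serve equally well for the latter indices). Prescribing the images of a basis determines a unique linear map, so $C$ is well defined.

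Finally, to check (\ref{lm-8-1}), let $\alpha\in\mbox{Range}\,B$ and write $\alpha=\sum_{i=1}^r c_i\alpha_i$ with scalars $c_i\in\mathbb R$. Then, by linearity of $C$ and $B$, $BC\alpha=\sum_{i=1}^r c_i\,BC\alpha_i=\sum_{i=1}^r c_i\,B\beta_i=\sum_{i=1}^r c_i\alpha_i=\alpha$, which is exactly (\ref{lm-8-1}).

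There is essentially no obstacle here; the only point deserving a word is the routine linear-algebra fact that a linear map on $\mathbb R^n$ can be freely specified on a basis. As an alternative one-line argument, one could simply take $C$ to be the Moore--Penrose pseudoinverse $B^{+}$ of $B$ and invoke the standard identity that $BB^{+}$ is the orthogonal projection of $\mathbb R^n$ onto $\mbox{Range}\,B$, so that $BB^{+}\alpha=\alpha$ for every $\alpha\in\mbox{Range}\,B$.
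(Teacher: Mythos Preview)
Your proof is correct and follows essentially the same approach as the paper: both pick a basis of $\mbox{Range}\,B$, choose preimages under $B$, define $C$ on that basis accordingly, and then extend linearly to all of $\mathbb R^n$. Your version is slightly more explicit about the extension step (completing the basis and sending the extra vectors to $0$) and adds the pleasant remark about the Moore--Penrose pseudoinverse, but the core argument is identical.
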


\begin{proof}
When $B=0$,  (\ref{lm-8-1}) is trivial. Thus, we can assume, without loss of generality, that  $B\neq 0$.  Take a basis $\{\alpha_1, \ldots, \alpha_q\}$ of $\mbox{Range}\,B$, where $q \triangleq \mbox{dim}\,\mbox{Range}\,B$ ($q\geq 1$, since $B\neq 0$). Then for each $k\in\{1,\ldots,q\}$, there exists $\beta_k \in \mathbb R^m$ so that
\begin{equation}\label{lm-8-2}
  B\beta_k = \alpha_k.
\end{equation}
Define a linear map $\hat C$ from $\mbox{Range}\,B$ to $\mathbb R^m$ so that
\begin{equation}\label{lm-8-3}
  \hat C(\alpha_k) = \beta_k\;\;\mbox{for each}\;\;k\in\{1,\ldots,q\}.
\end{equation}
We claim that
\begin{equation}\label{lm-8-4}
  B\hat C(\alpha) = \alpha\;\;\mbox{for each}\;\;\alpha\in\mbox{Range}\,B.
\end{equation}
To this end, we arbitrarily fix $\alpha\in\mbox{Range}\,B$ and write
\begin{equation}\label{lm-8-5}
  \alpha = \sum_{k=1}^q a_k\alpha_k\;\; \mbox{with}\;\;\{a_k\}_{k=1}^q\subset\mathbb R.
\end{equation}
It follows from (\ref{lm-8-5}), (\ref{lm-8-3}) and (\ref{lm-8-2}) that
\begin{equation*}
  B\hat C(\alpha) = \sum_{k=1}^q a_kB\hat C(\alpha_k) = \sum_{k=1}^q a_kB\beta_k = \sum_{k=1}^q a_k\alpha_k = \alpha.
\end{equation*}
Since $\alpha$ was arbitrarily taken from $\mbox{Range}\,B$, the above leads to (\ref{lm-8-4}).

Finally, we observe that the map $\hat C$ is defined over $\mbox{Range}\,B$ (see (\ref{lm-8-3})). It is easy to extend $\hat C$ to be a linear operator over $\mathbb R^n$, with the property (\ref{lm-8-4}).
This ends the proof of Lemma~\ref{lm-8}.
\end{proof}

\begin{lemma}\label{lm-9}
Let $V_1,\dots,V_q$, with $q\in \mathbb{N}^+$, be  subspaces of $\mathbb R^n$ so that
\begin{equation}\label{lm-9-1}
  \sum_{k=1}^q V_k = \mathbb R^n.
\end{equation}
Then there are linear operators $P_1,\dots,P_q$ (from $\mathbb R^n$ to $\mathbb R^n$) so that
for each $k\in\{1,\ldots,q\}$, $P_k$ maps $\mathbb R^n$ into $V_k$ and so that
\begin{equation}\label{lm-9-2}
  \alpha = \sum_{k=1}^q P_k\alpha \;\;\mbox{for each}\;\; \alpha \in \mathbb R^n.
\end{equation}
\end{lemma}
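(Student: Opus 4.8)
The plan is to build a basis of $\mathbb R^n$ that is \emph{compatible} with the decomposition in (\ref{lm-9-1}), meaning that every basis vector lies in one of the subspaces $V_k$, and then to define $P_k$ as the coordinate projection that retains the basis vectors assigned to $V_k$ and annihilates the others.

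First I would, for each $k\in\{1,\dots,q\}$, fix a basis $\{v^{(k)}_1,\dots,v^{(k)}_{d_k}\}$ of $V_k$, where $d_k\triangleq\dim V_k$ (this list is empty when $V_k=\{0\}$). By (\ref{lm-9-1}), the concatenated family $\mathcal F\triangleq\{\,v^{(k)}_j\;:\;1\le k\le q,\ 1\le j\le d_k\,\}$ spans $\mathbb R^n$. Running through $\mathcal F$ in a fixed order and discarding each vector that lies in the linear span of its predecessors, I extract a subfamily $\mathcal B\subset\mathcal F$ that is a basis of $\mathbb R^n$. Every element of $\mathcal B$ is some $v^{(k)}_j$, hence carries a well-defined label $k$; write $\mathcal B_k$ for the set of elements of $\mathcal B$ with label $k$, so that $\mathcal B=\bigcup_{k=1}^q\mathcal B_k$ is a disjoint union and $\mathcal B_k\subset V_k$.

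Next I would define, for each $k\in\{1,\dots,q\}$, the linear operator $P_k:\mathbb R^n\to\mathbb R^n$ by prescribing its values on the basis $\mathcal B$: set $P_kv\triangleq v$ for $v\in\mathcal B_k$ and $P_kv\triangleq 0$ for $v\in\mathcal B\setminus\mathcal B_k$, then extend by linearity. Since $\mathcal B_k\subset V_k$ and $V_k$ is a subspace, the range of $P_k$, being the span of $\mathcal B_k$, is contained in $V_k$. Moreover, for every $v\in\mathcal B$ exactly one summand in $\sum_{k=1}^qP_kv$ equals $v$ while the rest vanish, so $\sum_{k=1}^qP_kv=v$; as $\mathcal B$ is a basis of $\mathbb R^n$, this gives $\sum_{k=1}^qP_k\alpha=\alpha$ for all $\alpha\in\mathbb R^n$, which is (\ref{lm-9-2}).

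There is essentially no genuine obstacle in this argument; the only point requiring care is the bookkeeping of labels — one must ensure that each vector kept in the extraction step still records which $V_k$ it came from, and then use (trivially) that the span of a subset of $V_k$ remains inside $V_k$. For the record, an equally short alternative is induction on $q$: put $W\triangleq\sum_{k=1}^{q-1}V_k$, choose $U\subset V_q$ with $\mathbb R^n=W\oplus U$, let $Q$ be the projection of $\mathbb R^n$ onto $W$ along $U$, apply the induction hypothesis inside the space $W$ to obtain $\widetilde P_1,\dots,\widetilde P_{q-1}$ with $\widetilde P_k(W)\subset V_k$ and $\sum_{k=1}^{q-1}\widetilde P_k=I_W$, and finally set $P_k\triangleq\widetilde P_k\circ Q$ for $k<q$ and $P_q\triangleq I-Q$.
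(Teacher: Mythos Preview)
Your argument is correct. The paper's proof takes a slightly different, though equally elementary, route: it starts from an \emph{arbitrary} basis $\{\alpha_1,\dots,\alpha_n\}$ of $\mathbb R^n$, uses (\ref{lm-9-1}) to write each $\alpha_j=\sum_{k=1}^q\alpha_{j,k}$ with $\alpha_{j,k}\in V_k$, and then defines $P_k$ by $P_k(\alpha_j)\triangleq\alpha_{j,k}$. Your approach instead first manufactures a basis \emph{compatible} with the $V_k$'s and then projects in coordinates. What you gain is a little extra structure you did not need: your $P_k$'s are genuine projections ($P_k^2=P_k$) with $P_kP_j=0$ for $k\neq j$, whereas the paper's $P_k$'s need not be idempotent. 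What the paper's version gains is that it avoids the extraction/bookkeeping step entirely --- one line of decomposition plus linearity suffices. Either way the lemma is a routine linear-algebra fact, and your inductive alternative is also fine.
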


\begin{proof}
Take a basis $\{\alpha_1, \ldots, \alpha_n\}$ of $\mathbb R^n$. By (\ref{lm-9-1}), we find that for each $j\in\{1,\ldots,n\}$, there exits $(\alpha_{j,1}, \alpha_{j,2}, \ldots, \alpha_{j,q}) \in \Pi_{k=1}^q V_k$ so that
\begin{equation}\label{lm-9-3}
  \alpha_j = \sum_{k=1}^q\alpha_{j,k}.
\end{equation}
For each $k\in\{1,\ldots,q\}$, we define a linear map $P_k: \mathbb R^n \rightarrow V_k$
in the following manner:
\begin{equation}\label{lm-9-4}
  P_k(\alpha_j) = \alpha_{j,k}\;\;\mbox{for each}\;\;j\in\{1,\ldots,n\}.
\end{equation}
We claim that the above $\{P_k\}_{k=1}^q$ satisfies (\ref{lm-9-2}). For this purpose, we arbitrarily take $\alpha \in \mathbb R^n$. Write
\begin{equation}\label{lm-9-5}
  \alpha = \sum_{j=1}^n a_j\alpha_j \;\;\mbox{with}\;\;\{a_j\}_{j=1}^n\subset \mathbb R.
\end{equation}
 Then it follows from (\ref{lm-9-5}), (\ref{lm-9-3}) and (\ref{lm-9-4}) that
\begin{eqnarray*}
\alpha &=& \sum_{j=1}^n a_j(\sum_{k=1}^q\alpha_{j,k})
= \sum_{k=1}^q( \sum_{j=1}^n a_j\alpha_{j,k})
= \sum_{k=1}^q(\sum_{j=1}^n a_jP_k(\alpha_j))
= \sum_{k=1}^qP_k(\sum_{j=1}^n a_j\alpha_j)\\
&=& \sum_{k=1}^q P_k(\alpha).
\end{eqnarray*}
Since $\alpha$ was arbitrarily taken from $\mathbb R^n$, the above leads to (\ref{lm-9-2}). We end the proof of Lemma \ref{lm-9}.
\end{proof}

\begin{lemma}\label{lemmawang3.3}
Let $(A,B)\in \mathbb{R}^{n\times n}\times\mathbb{R}^{n\times m}$ satisfies that
\begin{equation}\label{wanggs3.12}
  \mbox{rank}\;(B, AB, A^2B, \ldots, A^{n-1}B) < n.
\end{equation}
Then there is  $\hat{\mathbf{z}} \in L^2(\Omega; \mathbb R^n)\setminus\{0\}$ so that
\begin{equation}\label{wanggs3.13}
  \chi_{\omega}B^*e^{\mathcal A^*t}\hat {\mathbf{z}} = 0\;\;\mbox{for each}\;\; t >0.
\end{equation}
\end{lemma}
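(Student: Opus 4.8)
The plan is to reduce the statement to the elementary control structure of the pair $(A^*,B^*)$ via Proposition~\ref{lm-5}, which gives $e^{\mathcal A^*t}=e^{-A^*t}e^{\mathbf{\Delta}t}$. Set $W\triangleq\mbox{Range}\,(B,AB,\dots,A^{n-1}B)\subset\mathbb R^n$. By the hypothesis (\ref{wanggs3.12}) we have $\dim W<n$, so the orthogonal complement $W^\bot$ of $W$ in $\mathbb R^n$ is nontrivial; I would fix once and for all some $\alpha\in W^\bot\setminus\{0\}$.

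First I would check the purely algebraic fact that $B^*e^{-A^*t}\alpha=0$ for all $t\ge 0$. Indeed, $\alpha\in W^\bot$ means $\langle\alpha,A^kBv\rangle_{\mathbb R^n}=0$ for every $v\in\mathbb R^m$ and every $k\in\{0,\dots,n-1\}$, i.e. $B^*(A^*)^k\alpha=0$ for $k=0,\dots,n-1$; by the Cayley--Hamilton theorem, $(A^*)^k$ with $k\ge n$ is a polynomial in $I,A^*,\dots,(A^*)^{n-1}$, hence $B^*(A^*)^k\alpha=0$ for all $k\ge 0$, and expanding the exponential series yields $B^*e^{-A^*t}\alpha=\sum_{k\ge 0}\frac{(-t)^k}{k!}B^*(A^*)^k\alpha=0$.

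Next I would produce $\hat{\mathbf z}$ by tensoring $\alpha$ with an arbitrary nonzero scalar spatial profile: pick any $\phi\in L^2(\Omega;\mathbb R)\setminus\{0\}$ and put $\hat{\mathbf z}\triangleq\phi\,\alpha\in L^2(\Omega;\mathbb R^n)\setminus\{0\}$. Since $\mathbf{\Delta}=\mbox{diag}\{\Delta,\dots,\Delta\}$ acts componentwise and $\alpha$ is a constant vector, one has $e^{\mathbf{\Delta}t}\hat{\mathbf z}=(e^{\Delta t}\phi)\,\alpha$, where $e^{\Delta t}$ denotes the Dirichlet heat semigroup on $L^2(\Omega;\mathbb R)$. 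Using Proposition~\ref{lm-5} and then pulling the scalar function $e^{\Delta t}\phi$ past the constant matrix $e^{-A^*t}$, I obtain
$$\chi_\omega B^*e^{\mathcal A^*t}\hat{\mathbf z}=\chi_\omega B^*e^{-A^*t}\big((e^{\Delta t}\phi)\,\alpha\big)=\chi_\omega\big[(e^{\Delta t}\phi)\,B^*e^{-A^*t}\alpha\big]=0\qquad\mbox{for all }t>0,$$
which is exactly (\ref{wanggs3.13}).

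I do not expect any genuine obstacle here: the only points deserving a line of justification are that $W^\bot\ne\{0\}$ (immediate from (\ref{wanggs3.12})) and the componentwise commutation $e^{\mathbf{\Delta}t}(\phi\alpha)=(e^{\Delta t}\phi)\alpha$ (immediate since the Laplacians composing $\mathbf{\Delta}$ do not couple the coordinates). The degenerate case $B=0$ is subsumed, since then $W=\{0\}$, $W^\bot=\mathbb R^n$, $B^*=0$, and (\ref{wanggs3.13}) holds trivially for any $\hat{\mathbf z}=\phi\alpha\ne 0$.
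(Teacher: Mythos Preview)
Your proof is correct and follows essentially the same strategy as the paper's: pick $\alpha$ in $\bigcap_{k\ge 0}\ker\big(B^*(A^*)^k\big)$ (equivalently, in $W^\perp$), build $\hat{\mathbf z}$ with constant direction $\alpha$, and apply the factorization $e^{\mathcal A^*t}=e^{-A^*t}e^{\mathbf{\Delta} t}$ from Proposition~\ref{lm-5}. The one noteworthy difference is in how the spatial part is handled. The paper takes $\hat{\mathbf z}(x)\equiv\alpha$ and then invokes Proposition~\ref{lm-6}(i) (hence the unique continuation property for the heat equation) to conclude that $\chi_\omega e^{\mathbf{\Delta} t}\hat{\mathbf z}(x)$ stays in the invariant subspace $V=\bigcap_k\ker(B^*(A^*)^k)$ pointwise. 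Your direct observation $e^{\mathbf{\Delta} t}(\phi\alpha)=(e^{\Delta t}\phi)\,\alpha$, valid because $\mathbf{\Delta}$ acts diagonally, bypasses this entirely: it shows by elementary means that $e^{\mathbf{\Delta} t}\hat{\mathbf z}(x)$ is a scalar multiple of $\alpha$ at every $x$, so Proposition~\ref{lm-6} and the unique continuation machinery are not needed for this lemma. As a small bonus, your argument works for any nonzero scalar profile $\phi$, not only the constant one.
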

\begin{proof}
We first  claim that
\begin{equation}\label{lm-14-10}
  \bigcap_{k=0}^{n-1}\mbox{ker\,}(B^*(A^*)^k) \neq \{0\}.
\end{equation}
Indeed, since  $(A, B)$ satisfies (\ref{wanggs3.12}),  there is $\alpha \in \mathbb R^n \setminus \{0\}$ so that for all $k\in\{0,\ldots,n-1\}$,
\begin{equation*}
  \alpha^\intercal(B, AB, \ldots, A^{n-1}B)=0,\;\; i.e.,\;\; \alpha^\intercal A^{k}B = 0,
\end{equation*}
from which, it follows  that
\begin{equation*}
  B^*(A^*)^{k}\alpha = (\alpha^\intercal A^{k}B)^\intercal=0\;\; \mbox{for all}\;\; k\in\{0,\ldots,n-1\}.
\end{equation*}
This implies that
\begin{equation*}
  \alpha \in \mbox{ker\,}(B^*(A^*)^k)\;\; \mbox{for all}\;\;k\in\{0,\ldots,n-1\}.
\end{equation*}
Since $\alpha\neq 0$, the above  leads to (\ref{lm-14-10}).

Next, we  let
\begin{equation}\label{lm-14-12}
  V \triangleq \bigcap_{k=0}^{\infty}\mbox{ker}\,(B^*(A^*)^k).
\end{equation}
By   the Cayley-Hamilton theorem, we have that $V=\bigcap_{k=0}^{n-1}\mbox{ker}\,(B^*(A^*)^k)$. Then by (\ref{lm-14-10}),
there is $\alpha\in \mathbb{R}^n$ so that
\begin{equation}\label{lm-14-10-1}
  \alpha \in V\setminus\{0\}.
\end{equation}
We define  a function $\mathbf{z}_{\alpha} : \Omega \rightarrow \mathbb R^n$ in the following manner:
\begin{equation}\label{lm-14-11}
  \mathbf{z}_\alpha(x) = \alpha \;\; a.e. \;\; x\in\Omega.
\end{equation}
Then by Proposition \ref{lm-5}  (see the first equality in (\ref{lm-5-1})), we find that for each $t>0$,
\begin{eqnarray}\label{lm-14-13}
\chi_{\omega}B^*e^{\mathcal A^*t}\mathbf{z}_\alpha &=& \chi_{\omega}B^*e^{-A^*t}e^{\mathbf{\Delta} t}\mathbf{z}_\alpha
= B^*e^{-A^*t}\chi_{\omega}e^{\mathbf{\Delta} t}\mathbf{z}_\alpha
\nonumber\\
&=& B^*\Big(\sum_{k\geq0}\frac{1}{k!}(-t)^k (A^*)^k\Big)\chi_{\omega}e^{\mathbf{\Delta} t}\mathbf{z}_\alpha
\nonumber\\
&=& \sum_{k\geq0}\frac{1}{k!}(-t)^k B^*(A^*)^k\chi_{\omega}e^{\mathbf{\Delta} t}\mathbf{z}_\alpha.
\end{eqnarray}
Meanwhile, since $\mathbf{z}_\alpha(x)\in V\;\mbox{a.e.}\; x\in\Omega$ (see (\ref{lm-14-10-1}) and (\ref{lm-14-11})), it follows by (i) of Proposition \ref{lm-6} (where $\omega_1 = \omega$, $\tau=t$ and $\mathbf{z} = \mathbf{z}_{\alpha}$) that for each $t>0$,
\begin{equation*}
  \chi_{\omega}(x)e^{\mathbf{\Delta} t}\mathbf{z}_\alpha(x) \in V\;\;\mbox{for a.e.}\;\; x\in\Omega,
\end{equation*}
which, together with (\ref{lm-14-12}), yields that for each $t>0$,
\begin{equation*}
 B^*(A^*)^k\chi_{\omega}(x)e^{\mathbf{\Delta} t}\mathbf{z}_\alpha(x)=0\;\;\mbox{for a.e.}\;\; x\in\Omega.
\end{equation*}
The above, along with  (\ref{lm-14-13}), leads to (\ref{wanggs3.13}) with $\hat {\mathbf{z}} = \mathbf{z}_\alpha$. This ends the proof.
\end{proof}

We now on the position to prove Theorem~\ref{th-1}.

\begin{proof}[Proof of Theorem \ref{th-1}]
(i) Assume that
\begin{equation}\label{th-1-2}
  \Omega \setminus \overline{\omega} \neq \emptyset.
\end{equation}
By contradiction, we suppose that the system (\ref{heat-eq-ip}) were null controllable over $[0, \hat T]$ for some $\hat T>0$. Then, according to  (i) of Definition \ref{Def-2}, for an arbitrarily fixed
\begin{equation}\label{th-1-3}
  \hat{\mathbf{y}}_0 \in L^2(\Omega; \mathbb R^n)\setminus\{0\},
\end{equation}
  there is $\hat p \in \mathbb N^+$, $\{\hat\tau_k\}_{k=1}^{\hat p} \subset (0,\hat T)$ (with $\hat\tau_1<\cdots<\hat\tau_p$),
and $\{\hat{\mathbf{u}}_k\}_{k=1}^{\hat p} \subset L^2(\Omega; \mathbb R^m)$
so that
\begin{equation*}
  \mathbf{y}(\hat T; \hat{\mathbf{y}}_0, \{\hat\tau_k\}_{k=1}^{\hat p}, \{\hat{\mathbf{u}}_k\}_{k=1}^{\hat p}) =0.
\end{equation*}
This, along with (\ref{solution-formula}), indicates that
\begin{equation*}
  0 = \mathbf{y}(\hat T; \hat{\mathbf{y}}_0, \{\hat\tau_k\}_{k=1}^{\hat p}, \{\hat{\mathbf{u}}_k\}_{k=1}^{\hat p})
   = e^{\mathcal A(T-\hat\tau_{\hat p})}\Big( e^{\mathcal A\hat\tau_{\hat p}}\hat{\mathbf{y}}_0 + \sum_{k=1}^{\hat p} e^{\mathcal A(\hat\tau_{\hat p}-\hat\tau_k)}\chi_{\omega}B\hat{\mathbf{u}}_k\Big).
\end{equation*}
From this and (ii) of Proposition \ref{lm-6}, where
\begin{equation*}
  \omega_1= \Omega, \tau= T-\hat\tau_{\hat p} \;\;\mbox{and}\;\;
\mathbf{z} =  e^{\mathcal A\hat\tau_{\hat p}}\hat{\mathbf{y}}_0 + \sum_{k=1}^{\hat p} e^{\mathcal A(\hat\tau_{\hat p}-\hat\tau_k)}\chi_{\omega}B\hat{\mathbf{u}}_k,
\end{equation*}
we  find that
\begin{eqnarray}\label{th-1-5}
 0
 &=& e^{\mathcal A\hat\tau_{\hat p}}\hat{\mathbf{y}}_0 + \sum_{k=1}^{\hat p} e^{\mathcal A(\hat\tau_{\hat p}-\hat\tau_k)}\chi_{\omega}B\hat{\mathbf{u}}_k
 \nonumber\\
 &=& e^{\mathcal A(\hat\tau_{\hat p} - \hat\tau_{\hat p-1})}\Big( e^{\mathcal A\hat\tau_{\hat p-1}}\hat{\mathbf{y}}_0 + \sum_{k=1}^{\hat p-1} e^{\mathcal A(\hat\tau_{\hat p-1}-\hat\tau_k)}\chi_{\omega}B\hat{\mathbf{u}}_k\Big) + \chi_{\omega}B\hat u_p.
\end{eqnarray}
Meanwhile, from (\ref{th-1-2}), there exists $B_r(\mathbf{x}_0)\subset\Omega$ so that
\begin{equation}\label{th-1-6}
  B_r(\mathbf{x}_0) \subset \Omega \setminus \overline{\omega},
\end{equation}
where $B_r(\mathbf{x}_0)$ denotes the ball in $\mathbb R^{N}$, centred at $\mathbf{x}_0$ and of radius $r>0$. From (\ref{th-1-6}), it follows that $\chi_{\omega}B\hat u_p=0\;\;\mbox{on}\; B_r(\mathbf{x}_0)$, which, along with  (\ref{th-1-5}), yields that
\begin{equation*}
  e^{\mathcal A(\hat\tau_{\hat p} - \hat\tau_{\hat p-1})}\Big( e^{\mathcal A\hat\tau_{\hat p-1}}\hat{\mathbf{y}}_0 + \sum_{k=1}^{\hat p-1} e^{\mathcal A(\hat\tau_{\hat p-1}-\hat\tau_k)}\chi_{\omega}B\hat{\mathbf{u}}_k\Big)
  = 0 \;\; \mbox{on}\; B_r(\mathbf{x}_0).
\end{equation*}
From this and  (ii) of Proposition \ref{lm-6}, where
\begin{equation*}
  \omega_1= B_r(\mathbf{x}_0), \tau= \hat\tau_{\hat p} - \hat\tau_{\hat p-1} \;\;\mbox{and}\;\;
\mathbf{z} =  e^{\mathcal A\hat\tau_{\hat p-1}}\hat{\mathbf{y}}_0 + \sum_{k=1}^{\hat p-1} e^{\mathcal A(\hat\tau_{\hat p-1}-\hat\tau_k)}\chi_{\omega}B\hat{\mathbf{u}}_k,
\end{equation*}
we see that
\begin{equation}\label{wang3.16}
  e^{\mathcal A\hat\tau_{\hat p-1}}\hat{\mathbf{y}}_0 + \sum_{k=1}^{\hat p-1} e^{\mathcal A(\hat\tau_{\hat p-1}-\hat\tau_k)}\chi_{\omega}B\hat{\mathbf{u}}_k = 0.
\end{equation}
Following the same way as that showing (\ref{wang3.16}),  one can verify that
when $1\leq q \leq\hat p-1$,
\begin{equation*}
   e^{\mathcal A\hat\tau_{q}}\hat{\mathbf{y}}_0 + \sum_{k=1}^{q} e^{\mathcal A(\hat\tau_{q}-\hat\tau_k)}\chi_{\omega}B\hat{\mathbf{u}}_k = 0.
\end{equation*}
In particular, by taking $q=1$ in the above, we obtain that
\begin{equation*}
  e^{\mathcal A\hat\tau_1}\hat{\mathbf{y}}_0 + \chi_{\omega}B\hat u_1 = 0.
\end{equation*}
This, combined with  (\ref{th-1-6}), yields that
$ e^{\mathcal A\hat\tau_1}\hat{\mathbf{y}}_0 =0$ over $B_r(\mathbf{x}_0)$.
From the above and  (ii) of Proposition \ref{lm-6}, where
$\omega_1= B_r(\mathbf{x}_0)$, $\tau= \hat\tau_1$ and $\mathbf{z}=\hat{\mathbf{y}}_0$,
we find that $\hat{\mathbf{y}}_0 =0$,
which contradicts (\ref{th-1-3}). Hence, when $\omega$ satisfies (\ref{th-1-2}),  the system (\ref{heat-eq-ip}) is not null controllable on $[0,T]$ for any $T>0$. This ends the proof of the conclusion (i) of Theorem \ref{th-1}.

(ii) Suppose that
\begin{equation}\label{th-1-7}
  \omega = \Omega.
\end{equation}

{\it Step 1. We  prove the sufficiency.}

 Assume that $(A,B)$ satisfies Kalman's controllability rank condition (\ref{th-1-1}).
We aim to show the null controllability for the system (\ref{heat-eq-ip}).
To this end, we arbitrarily fix $T>0$ and  an increasing sequence $\{\tau_k\}_{k=1}^n \subset (0,T)$, with $\tau_n - \tau_1 < d_A$ (given by (\ref{th-o-1-1})). Then by Theorem \ref{th-o-1} and (\ref{th-1-1}), we get that
\begin{equation}\label{th-1-9}
 \mbox{rank}\;(e^{A\tau_1}B, \ldots, e^{A\tau_{n}}B) = n.
\end{equation}
For each $k \in \{1,\ldots,n\}$, we define a subspace $V_k$ of $\mathbb R^n$ in the following manner:
\begin{equation}\label{th-1-10}
  V_k = \{e^{A\tau_k}B\alpha \in \mathbb R^n: \;\; \alpha \in \mathbb R^m\}.
\end{equation}
From (\ref{th-1-9}) and (\ref{th-1-10}), we see that
$\mathbb R^n = \sum_{k=1}^n V_k$.
This, together with  Lemma \ref{lm-9}, yields that for each $k\in\{1,\ldots,n\}$, there is a linear map $P_{k}: \mathbb R^n \rightarrow V_k$ so that
\begin{equation}\label{th-1-11}
 \alpha = \sum_{k=1}^n P_{k}\alpha \;\;\mbox{for each}\;\; \alpha\in\mathbb R^n.
\end{equation}

We now claim that for an arbitrarily fixed $\mathbf{y}_0 \in L^2(\Omega; \mathbb R^n)$,
\begin{equation}\label{th-1-13}
   e^{\mathbf\Delta\tau_k}e^{-A\tau_k}P_{k}\mathbf{y}_0(x) \in \mbox{Range}\,B\;\; \mbox{ a.e.}\;\; x\in\Omega, \;\;\mbox{when}\;\;1\leq k\leq n.
\end{equation}
For this purpose, we observe from  (\ref{th-1-10}) that $e^{-A\tau_k}V_k \subset \mbox{Range}\,B$ for each  $k\in\{1,\ldots,n\}$.
This yields that  for each $k\in\{1,\ldots,n\}$,
\begin{equation}\label{wang3.22}
  e^{-A\tau_k}P_{k}\mathbf{y}_0(x) \in \mbox{Range}\,B\;\; \mbox{for a.e.}\;\; x\in\Omega.
\end{equation}
By (\ref{wang3.22}) and (i) of Proposition \ref{lm-6}, where
\begin{equation*}
  \omega_1= \Omega, V=\mbox{Range}\,B, \tau=\tau_k \;\;\mbox{and}\;\;
\mathbf{z} = e^{-A\tau_k}P_{k}\mathbf{y}_0,
\end{equation*}
  we are led to (\ref{th-1-13}).

Next,  according to Lemma \ref{lm-8},  there exists a linear map $C$ from $\mathbb R^n$ to $\mathbb R^m$ so that
\begin{equation}\label{th-1-12}
  BC\alpha = \alpha\;\; \mbox{for all}\;\; \alpha\in\mbox{Range}\,B.
\end{equation}
From (\ref{th-1-13}) and (\ref{th-1-12}), we see that for each  $k\in\{1,\ldots,n\}$,
\begin{equation}\label{th-1-14}
  e^{\mathbf{\Delta}\tau_k}e^{-A\tau_k}P_{k}\mathbf{y}_0 + B\hat{\mathbf{u}}_k = 0,
\end{equation}
where the control $\hat{\mathbf{u}}_k$ is defined by
\begin{equation*}
  \hat{\mathbf{u}}_k(x) \triangleq -Ce^{\mathbf{\Delta}\tau_k}e^{-A\tau_k}P_{k}\mathbf{y}_0(x),\;\; x\in\Omega.
\end{equation*}
Since $\omega = \Omega$ (see (\ref{th-1-7})), it follows from (\ref{solution-formula}), (\ref{th-1-11}), Proposition \ref{lm-5}
(see the second equality in (\ref{lm-5-1})) and (\ref{th-1-14}) that
\begin{eqnarray*}
\mathbf{y}(T; \mathbf{y}_0, \{\tau_k\}_{k=1}^n, \{\hat{\mathbf{u}}_k\}_{k=1}^n)
&=& e^{\mathcal AT}\mathbf{y}_0 + \sum_{k=1}^ne^{\mathcal A(T-\tau_k)}B\hat{\mathbf{u}}_k\\
&=& e^{\mathcal AT}\big(\sum_{k=1}^nP_{k}\mathbf{y}_0\big)+ \sum_{k=1}^ne^{\mathcal A(T-\tau_k)}B\hat{\mathbf{u}}_k\\
&=& \sum_{k=1}^ne^{\mathcal A(T-\tau_k)}\big(e^{\mathcal A\tau_k}P_{k}\mathbf{y}_0 + B\hat{\mathbf{u}}_k\big)\\
&=& \sum_{k=1}^ne^{\mathcal A(T-\tau_k)}\big(e^{\mathbf{\Delta}\tau_k}e^{-A\tau_k}P_{k}\mathbf{y}_0 + B\hat{\mathbf{u}}_k\big)
=0.
\end{eqnarray*}
Because $T>0$ and $\mathbf{y}_0 \in L^2(\Omega; \mathbb R^n)$ were arbitrarily taken, the above, along with (ii) of Definition \ref{Def-2}, leads to the null controllability for the system (\ref{heat-eq-ip}).

{\it Step 2. We show  the necessity.}

 Assume that the system (\ref{heat-eq-ip}) is null controllable. We aim to prove that the pair $(A,B)$ satisfies Kalman's controllability rank condition (\ref{th-1-1}). By contradiction, we suppose that
\begin{equation}\label{lm-14-9}
  \mbox{rank}\;(B, AB, A^2B, \ldots, A^{n-1}B) < n.
\end{equation}
Then
by Lemma \ref{lemmawang3.3},
 there would be  $\hat{\mathbf{z}} \in L^2(\Omega; \mathbb R^n)\setminus\{0\}$ so that
\begin{equation}\label{lm-14-11-11}
  \chi_{\omega}B^*e^{\mathcal A^*t}\hat {\mathbf{z}} = 0\;\;\mbox{for each}\;\; t >0.
\end{equation}
 Since the system (\ref{heat-eq-ip}) is null controllable, it follows by (ii) of Definition \ref{Def-2} that the system
 (\ref{heat-eq-ip}) is null controllable over $[0,T]$ for each $T>0$. Then by this and (i) of Definition \ref{Def-2}, there is $p\in\mathbb N^+$, $\{\tau_{k}\}_{k=1}^{p} \subset(0,1)$ and $\{\mathbf{v}_{k}\}_{k=1}^{p} \subset L^2(\Omega; \mathbb R^m)$ so that
\begin{equation}\label{lm-14-14}
  0=\mathbf{y}(1; e^{\mathcal A^*}\hat{\mathbf{z}}, \{\tau_{k}\}_{k=1}^{p}, \{\mathbf{v}_{k}\}_{k=1}^{p})
  = e^{\mathcal A} (e^{\mathcal A^*}\hat{\mathbf{z}})+\mathbf{y}(1; 0, \{\tau_{k}\}_{k=1}^{p}, \{\mathbf{v}_{k}\}_{k=1}^{p}).
\end{equation}
By (\ref{lm-14-14}), (\ref{lm-1-1}) and (\ref{lm-14-11-11}), we get that
\begin{eqnarray*}
\langle e^{\mathcal A}(e^{\mathcal A^*}\hat{\mathbf{z}}), \hat{\mathbf{z}} \rangle
&=& -\big\langle\mathbf{y}(1; 0, \{\tau_{k}\}_{k=1}^{p}, \{\mathbf{v}_{k}\}_{k=1}^{p}),\hat {\mathbf{z}} \big\rangle
 \\
%&=&\Big\langle \sum_{k=1}^n e^{\mathcal A(T-\tau_k)}\chi_{\omega}Bv_k,\hat {\mathbf{z}} \Big\rangle - \big\langle\hat {\mathbf{z}} - \mathbf{y}(T; \mathbf{y}_0, \{\tau_{k,\varepsilon}\}_{k=1}^{p}, \{v_{k,\varepsilon}\}_{k=1}^{p}),\hat {\mathbf{z}} \big\rangle\\
&=&-\sum_{k=1}^{p} \Big\langle  \mathbf{v}_{k},\chi_{\omega}B^*e^{\mathcal A^*(1-\tau_{k})}\hat{\mathbf{z}} \Big\rangle
=0.
\end{eqnarray*}
This implies that $e^{\mathcal A^*}\hat {\mathbf{z}}=0$, which, combined with  Proposition \ref{lm-5}, shows that
\begin{eqnarray*}
 e^{\mathbf{\Delta}} \hat {\mathbf{z}}= e^{A^*} e^{-A^*} e^{\mathbf{\Delta}} \hat {\mathbf{z}}=e^{A^*}e^{\mathcal A^*}\hat {\mathbf{z}} = 0.
\end{eqnarray*}
From the above and  (i) of Proposition \ref{lm-6}, where
\begin{eqnarray*}
 \omega_1=\Omega,~V=\{0\},~\tau=1 \;\;\mbox{and}\;\;  {\mathbf{z}}=\hat {\mathbf{z}},
\end{eqnarray*}
we find that $\hat {\mathbf{z}}=0 $. This leads to a contradiction, since $\hat {\mathbf{z}}$  is not zero. Hence, $(A,B)$ satisfies Kalman's controllability rank condition (\ref{th-1-1}). This proves the necessity.

\vskip 5pt
In summary, we end the proof of Theorem \ref{th-1}.
\end{proof}

\bigskip

\section{The proof of Theorem \ref{th-2}}

The key  to proving Theorem \ref{th-2} is the following unique continuation property.
\begin{theorem}\label{lm-4}
 Let $T>0$ and $p\in \mathbb{N}^+$. Let $\{\tau_k\}_{k=1}^p\subset (0,T)$ be an increasing sequence. Then the following two statements are equivalent:

 (i) It holds that
\begin{equation*}
  rank\;(e^{A\tau_1}B, \ldots, e^{A\tau_{p}}B) = n.
\end{equation*}

(ii) If    $\mathbf{z} \in L^2(\Omega; \mathbb R^n)$, then
\begin{equation*}
  \chi_{\omega}B^*e^{\mathcal A^*(T-\tau_k)} \mathbf{z} = 0 \;\; \mbox{for all}\;\; k \in \{1,\ldots, p\}\Longrightarrow \mathbf{z}=0\;\;\mbox{over}\;\;\Omega.
\end{equation*}
\end{theorem}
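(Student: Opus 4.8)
The plan is to reduce the equivalence to a purely finite–dimensional statement by transferring each pointwise constraint $\chi_\omega B^*e^{\mathcal A^*(T-\tau_k)}\mathbf z=0$ from $\omega$ to all of $\Omega$, using the semigroup factorization in Proposition~\ref{lm-5} together with the unique–continuation equivalence in Proposition~\ref{lm-6}(i).

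The main step I would establish first is the following claim: setting $V_k\triangleq\mbox{ker}\big(B^*e^{-A^*(T-\tau_k)}\big)$, for each $k\in\{1,\dots,p\}$ and each $\mathbf z\in L^2(\Omega;\mathbb R^n)$,
\[
\chi_\omega B^*e^{\mathcal A^*(T-\tau_k)}\mathbf z=0\ \Longleftrightarrow\ \mathbf z(x)\in V_k\ \ \mbox{for a.e. }x\in\Omega .
\]
Indeed, by the first identity of Proposition~\ref{lm-5}, $e^{\mathcal A^*(T-\tau_k)}=e^{-A^*(T-\tau_k)}e^{\mathbf{\Delta}(T-\tau_k)}$, and since the constant matrix $B^*e^{-A^*(T-\tau_k)}$ commutes with the multiplication operator $\chi_\omega$, the left–hand side is equivalent to $e^{\mathbf{\Delta}(T-\tau_k)}\mathbf z(x)\in V_k$ for every $x\in\omega$ (we may say ``every'' rather than ``a.e.'' because $e^{\mathbf{\Delta}(T-\tau_k)}\mathbf z$ is smooth on $\Omega$). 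Since $V_k$ is a subspace of $\mathbb R^n$ and $T-\tau_k>0$, Proposition~\ref{lm-6}(i), applied with $\omega_1=\omega$, $V=V_k$ and $\tau=T-\tau_k$, rewrites this as $\mathbf z(x)\in V_k$ for a.e.\ $x\in\Omega$, which is the claim.

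Granting the claim, the hypothesis of (ii) holds for a given $\mathbf z$ if and only if $\mathbf z(x)\in\bigcap_{k=1}^p V_k$ for a.e.\ $x\in\Omega$. Taking transposes, $V_k=\big(e^{-A(T-\tau_k)}\,\mbox{Range}\,B\big)^{\perp}$, so $\bigcap_{k=1}^pV_k=\big(\sum_{k=1}^p e^{-A(T-\tau_k)}\,\mbox{Range}\,B\big)^{\perp}$; and a subspace $W\subset\mathbb R^n$ equals $\{0\}$ exactly when the only $\mathbf z\in L^2(\Omega;\mathbb R^n)$ with $\mathbf z(x)\in W$ a.e.\ is $\mathbf z=0$. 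Hence (ii) is equivalent to $\sum_{k=1}^p e^{-A(T-\tau_k)}\,\mbox{Range}\,B=\mathbb R^n$, i.e.\ to $\mbox{rank}\,(e^{-A(T-\tau_1)}B,\dots,e^{-A(T-\tau_p)}B)=n$.

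Finally I would close the loop using $e^{A\tau_k}B=e^{AT}e^{-A(T-\tau_k)}B$: as $e^{AT}$ is invertible, left multiplication by it does not change the rank, so $\mbox{rank}\,(e^{-A(T-\tau_1)}B,\dots,e^{-A(T-\tau_p)}B)=\mbox{rank}\,(e^{A\tau_1}B,\dots,e^{A\tau_p}B)$, which together with the previous paragraph gives (i)$\Longleftrightarrow$(ii). The only non–elementary ingredient is the claim in the second paragraph, and there all the analytic content is imported: it rests entirely on Proposition~\ref{lm-6}(i), whose proof invokes the unique continuation at one time point for the scalar heat equation; everything else is linear algebra. The one spot requiring a little care is the passage between ``a.e.\ on $\omega$'' and ``everywhere on $\omega$'' for the smooth functions $e^{\mathbf{\Delta}(T-\tau_k)}\mathbf z$ and $e^{\mathcal A^*(T-\tau_k)}\mathbf z$, so that the hypotheses of Proposition~\ref{lm-6}(i) are met verbatim.
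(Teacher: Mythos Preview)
Your proof is correct and, for the direction (i)$\Rightarrow$(ii), follows the paper's argument closely: the same factorization $e^{\mathcal A^*(T-\tau_k)}=e^{-A^*(T-\tau_k)}e^{\mathbf{\Delta}(T-\tau_k)}$ from Proposition~\ref{lm-5}, the same kernels $V_k=\mbox{ker}\,(B^*e^{-A^*(T-\tau_k)})$, and the same appeal to Proposition~\ref{lm-6}(i) to transfer the constraint from $\omega$ to all of $\Omega$. The difference lies in the converse. You upgrade the paper's one-sided implication to the full biconditional $\chi_\omega B^*e^{\mathcal A^*(T-\tau_k)}\mathbf z=0\Leftrightarrow\mathbf z(x)\in V_k$ a.e.\ (using both directions of Proposition~\ref{lm-6}(i)), thereby reducing the whole theorem to the purely finite-dimensional equivalence $\bigcap_kV_k=\{0\}\Leftrightarrow\mbox{rank}\,(e^{A\tau_1}B,\dots,e^{A\tau_p}B)=n$, which you dispatch by elementary linear algebra. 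The paper instead argues (ii)$\Rightarrow$(i) by contradiction, invoking Lemma~\ref{lemmawang3.3}; but that lemma's hypothesis is $\mbox{rank}\,(B,AB,\dots,A^{n-1}B)<n$, whereas the negation of (i) only gives $\mbox{rank}\,(e^{A\tau_1}B,\dots,e^{A\tau_p}B)<n$, a strictly weaker statement (cf.\ Example~\ref{e-1} with $\tau_2-\tau_1=d_A$, where Kalman's condition holds but (i) fails). Your unified route sidesteps this slip and is self-contained modulo Propositions~\ref{lm-5} and~\ref{lm-6}; in effect, for (ii)$\Rightarrow$(i) you are re-running the \emph{proof} of Lemma~\ref{lemmawang3.3} with $V=\bigcap_kV_k$ in place of $\bigcap_k\mbox{ker}\,(B^*(A^*)^k)$, which is exactly the adaptation the paper's citation needs.
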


\begin{proof} The proof is divided into the following two steps.\\
{\it Step 1. We show that (i) $\Longrightarrow$ (ii).}\\
      Suppose that (i) is true.
Let $\mathbf{z} \in L^2(\Omega; \mathbb R^n)$ satisfy that
\begin{equation}\label{lm-4-1-5}
  \chi_{\omega}B^*e^{\mathcal A^*(T-\tau_k)} \mathbf{z} = 0 \;\; \mbox{for all}\;\; k \in \{1,\ldots, p\}.
\end{equation}
It suffices  to show that $\mathbf{z}=0$.
For this purpose,  we first claim that
\begin{equation}\label{lm-4-1-2}
  \bigcap_{k=1}^p \mbox{ker}\;\left(B^*e^{-A^*(T-\tau_k)}\right) = \{0\}.
\end{equation}
Indeed, if $\alpha \in \bigcap_{k=1}^p \mbox{ker}\;\left(B^*e^{-A^*(T-\tau_k)}\right)$
(where $\alpha$ is a column vector in $\mathbb{R}^n$), then
\begin{equation*}
  B^*e^{-A^*(T-\tau_k)}\alpha = 0 \;\;\mbox{for all}\;\; k \in \{1,\ldots, p\},
\end{equation*}
from which, it follows that
\begin{eqnarray*}
\alpha^\intercal e^{-AT} (e^{A\tau_1}B, \cdots, e^{A\tau_p}B)
&=& (\alpha^\intercal e^{-A(T-\tau_1)}B, \cdots, \alpha^\intercal e^{-A(T-\tau_p)}B) \\
&=& \left(
                        \begin{array}{ccc}
                          B^*e^{-A^*(T-\tau_1)}\alpha \\
                          \vdots \\
                          B^*e^{-A^*(T-\tau_p)}\alpha\\
                        \end{array}
                      \right)^\intercal = 0.
\end{eqnarray*}
The above,  along with (i), yields that
$\alpha^\intercal e^{-AT}=0$, which leads to that  $\alpha =0$. Hence, (\ref{lm-4-1-2}) is true.

Next, by  Proposition \ref{lm-5} and (\ref{lm-4-1-5}), we get that for each $k \in \{1,\ldots, p\}$,
\begin{eqnarray*}
 B^*e^{-A^*(T-\tau_k)}\chi_{\omega}e^{\mathbf{\Delta}(T-\tau_k)}\mathbf{z}
 = \chi_{\omega}B^*e^{-A^*(T-\tau_k)}e^{\mathbf{\Delta}(T-\tau_k)} \mathbf{z}
 = \chi_{\omega}B^*e^{\mathcal A^*(T-\tau_k)} \mathbf{z} = 0.
\end{eqnarray*}
This indicates that for each $k \in \{1,\ldots, p\}$,
\begin{equation}\label{lm-4-3}
  e^{\mathbf{\Delta}(T-\tau_k)}\mathbf{z}(x) \in \mbox{ker}\; \left(B^*e^{-A^*(T-\tau_k)}\right)\;\; \mbox{for a.e.}\;\; x\in\omega.
\end{equation}
By (\ref{lm-4-3}), for each $k \in \{1,\ldots, p\}$, we can apply (i) of Proposition \ref{lm-6}, where
\begin{equation*}
V= \mbox{ker}\; \left(B^*e^{-A^*(T-\tau_k)}\right),\;\; \omega_1=\omega,\;\; \tau=T-\tau_k,
\end{equation*}
to obtain that
\begin{equation*}
  \mathbf{z}(x) \in \bigcap_{k=1}^p \mbox{ker}\;\left(B^*e^{-A^*(T-\tau_k)}\right)\;\; \mbox{for a.e.}\;\; x\in\Omega.
\end{equation*}
The above, together with (\ref{lm-4-1-2}), leads to that $\mathbf{z}=0$. Hence, (ii) holds.

{\it Step 2. We show that (ii) $\Longrightarrow$ (i).}

Suppose, by contradiction, that (ii) were true, but (i) did not hold. Then we would have that
\begin{equation*}
  rank\;(e^{A\tau_1}B, \ldots, e^{A\tau_{p}}B)<n.
\end{equation*}
Thus,  we can apply Lemma \ref{lemmawang3.3} to find
   $\hat{\mathbf{z}} \in L^2(\Omega; \mathbb R^n)\setminus\{0\}$ so that
$\chi_{\omega}B^*e^{\mathcal A^*t}\hat {\mathbf{z}}=0$ for each $t >0$. This, in particular, implies that
\begin{equation}\label{gnaw5.4}
\chi_{\omega}B^*e^{\mathcal A^*(T-\tau_k)}\hat {\mathbf{z}}=0\;\;\mbox{for all}\;\;k=1,\dots,p.
\end{equation}
From (\ref{gnaw5.4}) and (ii), we see that $\hat {\mathbf{z}}=0$, which leads to a contradiction. Hence, (ii) implies (i).

In summary, we end the proof
of Theorem~\ref{lm-4}.
\end{proof}

Now, we are on the position to prove Theorem \ref{th-2}.

\begin{proof}[Proof of Theorem \ref{th-2}]
(i) We divide the proof into the following two steps:

\textit{Step 1. We show the sufficiency.}

\noindent
Assume that the pair $(A, B)$ satisfies  Kalman's controllability rank condition (\ref{th-1-1}).
We aim to show the approximate controllability for the system (\ref{heat-eq-ip}).
 To this end, we arbitrarily fix  $T>0$ and  an increasing sequence $\{\hat\tau_k\}_{k=1}^n \subset (0,T)$ with $\hat\tau_n-\hat\tau_1<d_A$ (given by (\ref{th-o-1-1})). Then by
Theorem \ref{th-o-1}, we get that
\begin{equation}\label{lm-7-3}
  \mbox{rank}\;(e^{A\hat\tau_1}B, \ldots, e^{A\hat\tau_{n}}B) = n.
\end{equation}
Define an operator
$\mathcal{G}_T: (L^2(\Omega; \mathbb R^m))^n \rightarrow L^2(\Omega; \mathbb R^n)$ in the following manner:
\begin{equation}\label{lm-7-4}
  \mathcal{G}_T(\{\mathbf{u}_k\}_{k=1}^n) = \mathbf{y}(T; 0, \{\hat\tau_k\}_{k=1}^n, \{\mathbf{u}_k\}_{k=1}^n),\;\;
  \forall\, \{\mathbf{u}_k\}_{k=1}^n \in(L^2(\Omega; \mathbb R^m))^n.
\end{equation}
We claim that the range of  the map $\mathcal G$ is dense in $L^2(\Omega; \mathbb R^n)$, i.e.,
\begin{equation}\label{lm-7-5}
  \overline{\mbox{Range}\,\mathcal{G}_T} = L^2(\Omega; \mathbb R^n).
\end{equation}
By contradiction, we suppose that (\ref{lm-7-5}) were not true.
Then  there would be
\begin{equation}\label{lm-7-6}
  \mathbf{z} \in \overline{\mbox{Range}\,\mathcal{G}_T}^\bot \setminus \{0\}.
\end{equation}
By (\ref{lm-7-6}), (\ref{lm-7-4}) and (\ref{lm-1-1}), we find that for all $\{\mathbf{u}_k\}_{k=1}^n \subset L^2(\Omega; \mathbb R^n)$,
\begin{eqnarray*}
0 &=& \langle \mathbf{z}, \mathcal{G}_T(\{\mathbf{u}_k\}_{k=1}^n) \rangle
= \langle \mathbf{z}, \mathbf{y}(T; 0, \{\hat \tau_k\}_{k=1}^n, \{\mathbf{u}_k\}_{k=1}^n) \rangle\\
&=& \sum_{k=1}^n \langle \chi_{\omega}B^*e^{\mathcal A^*(T-\hat\tau_k)}\mathbf{z},  \mathbf{u}_k \rangle_{L^2(\Omega; \mathbb R^m)}.
\end{eqnarray*}
From the above, we see  that
\begin{equation*}
  \chi_{\omega}B^*e^{\mathcal A^*(T-\hat\tau_k)}\mathbf{z} = 0, \;\; \forall\, k \in \{1,\ldots, n\},
\end{equation*}
This, along with (\ref{lm-7-3}) and Theorem \ref{lm-4}, shows that
$\mathbf{z}=0$ over $\Omega$,
which contradicts (\ref{lm-7-6}). Hence (\ref{lm-7-5}) is true.

Now, we will use (\ref{lm-7-5}) to prove  the approximate controllability for the system (\ref{heat-eq-ip}) over $[0,T]$. For this purpose, we arbitrary take $\mathbf{y_0}, \mathbf{y_1} \in L^2(\Omega; \mathbb R^n)$ and $\varepsilon > 0$. Then by (\ref{lm-7-5}), we see that
\begin{equation*}
 \mathbf{y_1} - e^{\mathcal AT}\mathbf{y_0} \in \overline{\mbox{Range}\,\mathcal{G}_T}.
\end{equation*}
Thus, there exists $\{\hat {\mathbf{u}}_k\}_{k=1}^n \subset L^2(\Omega; \mathbb R^m)$ so that
\begin{eqnarray}\label{lm-7-8}
\|\mathcal{G}_T(\{\hat {\mathbf{u}}_k\}_{k=1}^n) - (\mathbf{y_1} - e^{\mathcal AT}\mathbf{y_0})\| \leq \varepsilon.
\end{eqnarray}
It follows from (\ref{lm-7-4}) and (\ref{lm-7-8}) that
\begin{eqnarray*}
\|\mathbf{y}(T; \mathbf{y_0}, \{\hat \tau_k\}_{k=1}^n, \{\hat {\mathbf{u}}_k\}_{k=1}^n) - \mathbf{y_1}\|
&=& \|\mathbf{y}(T; 0, \{\hat \tau_k\}_{k=1}^n, \{\hat {\mathbf{u}}_k\}_{k=1}^n) - (\mathbf{y_1}- e^{\mathcal AT}\mathbf{y_0})\|\\
&=& \|\mathcal{G}_T(\{\hat {\mathbf{u}}_k\}_{k=1}^n) - (\mathbf{y_1} - e^{\mathcal AT}\mathbf{y_0})\| \leq \varepsilon.
\end{eqnarray*}
This leads to the   approximate controllability for the system (\ref{heat-eq-ip}) over $[0,T]$ (see (i) of Definition \ref{Def-3}). Since $T>0$
 was arbitrarily fixed,  the approximate controllability for the system (\ref{heat-eq-ip}) over $[0,T]$
 follows at once (see (ii) of   Definition \ref{Def-3}).

 \vskip 5pt
 \textit{Step 2. We prove the necessity.}

 \noindent
 Assume that the system (\ref{heat-eq-ip}) has the approximate controllability.
By contradiction, we suppose that the pair $(A,B)$ did not satisfy Kalman's controllability rank condition  (\ref{th-1-1}). Then we would have that
\begin{equation}\label{lm-7-9}
  \mbox{rank}\;(B, AB, A^2B, \ldots, A^{n-1}B) < n.
\end{equation}
By (\ref{lm-7-9}), we can use  Lemma \ref{lemmawang3.3} to find
$\hat{\mathbf{z}} \in L^2(\Omega; \mathbb R^n)\setminus\{0\}$ so that
\begin{equation}\label{lm-7-11-11}
  \chi_{\omega}B^*e^{\mathcal A^*t}\hat {\mathbf{z}} = 0 \;\;\mbox{for all}\;\; t >0.
\end{equation}

 Meanwhile, according to the approximate controllability of  the system (\ref{heat-eq-ip}) (see (ii) of the Definition \ref{Def-3}), there exists $p\in\mathbb N^+$ and an increasing sequence $\{\tau_{k}\}_{k=1}^{p} \subset(0,1)$ so that for each $\varepsilon>0$, there is $\{\mathbf{v}_{k,\varepsilon}\}_{k=1}^{p} \subset L^2(\Omega; \mathbb R^m)$  so that
\begin{equation}\label{lm-7-14}
  \|\mathbf{y}(1; 0, \{\tau_{k}\}_{k=1}^{p}, \{\mathbf{v}_{k,\varepsilon}\}_{k=1}^{p}) -\hat{\mathbf{z}}\| \leq \varepsilon.
\end{equation}
By (\ref{lm-1-1}), (\ref{lm-7-11-11}), the Cauchy-Schwarz inequality and (\ref{lm-7-14}), we get that for each $\varepsilon > 0$,
\begin{eqnarray*}
\langle \hat {\mathbf{z}}, \hat {\mathbf{z}} \rangle
&=& \big\langle\mathbf{y}(1; 0, \{\tau_{k}\}_{k=1}^{p}, \{\mathbf{v}_{k,\varepsilon}\}_{k=1}^{p}),\hat {\mathbf{z}} \big\rangle
 +\big\langle\hat {\mathbf{z}} - \mathbf{y}(1; 0, \{\tau_{k}\}_{k=1}^{p}, \{\mathbf{v}_{k,\varepsilon}\}_{k=1}^{p}),\hat {\mathbf{z}} \big\rangle\\
%&=&\Big\langle \sum_{k=1}^n e^{\mathcal A(T-\tau_k)}\chi_{\omega}Bv_k,\hat {\mathbf{z}} \Big\rangle - \big\langle\hat {\mathbf{z}} - \mathbf{y}(T; \mathbf{y}_0, \{\tau_{k,\varepsilon}\}_{k=1}^{p}, \{v_{k,\varepsilon}\}_{k=1}^{p}),\hat {\mathbf{z}} \big\rangle\\
&=&\sum_{k=1}^{p} \Big\langle  \mathbf{v}_{k,\varepsilon},\chi_{\omega}B^*e^{\mathcal A^*(T-\tau_{k})}\hat{\mathbf{z}} \Big\rangle
+ \big\langle\hat {\mathbf{z}} - \mathbf{y}(1; \mathbf{y}_0, \{\tau_{k}\}_{k=1}^{p}, \{\mathbf{v}_{k,\varepsilon}\}_{k=1}^{p}),\hat {\mathbf{z}} \big\rangle\\
&=&\ \big\langle\hat {\mathbf{z}} - \mathbf{y}(1; 0, \{\tau_{k}\}_{k=1}^{p}, \{\mathbf{v}_{k,\varepsilon}\}_{k=1}^{p}),\hat {\mathbf{z}} \big\rangle\\
&\leq & \|\hat {\mathbf{z}} - \mathbf{y}(1; \mathbf{y}_0, \{\tau_{k}\}_{k=1}^{p}, \{\mathbf{v}_{k,\varepsilon}\}_{k=1}^{p})\| \|\hat {\mathbf{z}}\| \leq \varepsilon \|\hat {\mathbf{z}}\|.
\end{eqnarray*}
This implies that $\hat {\mathbf{z}} = 0$, which leads to a contradiction, since $\hat {\mathbf{z}}\neq 0$. Hence, $(A,B)$ satisfies Kalman's controllability rank condition (\ref{th-1-1}). This proves the necessity.

\vskip 5pt

(ii) The conclusion (ii)  has been proved in Step 1 of the proof of the conclusion (i).

\vskip 5pt
In summary, we end the proof of Theorem \ref{th-2}

\end{proof}

The next Example~\ref{e-2} explains the rationality of the condition that $\tau_n-\tau_1<d_A$ in (ii) in Theorem \ref{th-2}. Here,
$d_A$ is given by
(\ref{th-o-1-1}) where $C=A$.

\begin{example}\label{e-2}
Let $(A,B)\in \mathbb{R}^{2\times 2}\times \mathbb{R}^{2\times 1}$ be the pair given by (\ref{wanggssg2.3})-(\ref{e1-24}).
From Example \ref{e-1}, we see that $(A,B)$ satisfies Kalman's controllability rank condition (\ref{th-1-1}) and that $d_A={\pi}/{|b|}$.
  We will show that for any $T>0$, any $\tau_1, \tau_2\in(0,T)$,
with $\tau_2-\tau_1=d_A$,  the approximate controllability for the system (\ref{heat-eq-ip}) (governed by this pair $(A,B)$) over $[0,T]$ cannot be realized at $\{\tau_k\}_{k=1}^2$.

To this end, we arbitrarily fix $T>0$,  and then fix  $\tau_1, \tau_2\in(0,T)$,
with $\tau_2-\tau_1=d_A$.
From Example \ref{e-1}, we have that
\begin{equation}\label{p-1}
  \mbox{rank}\;(e^{A\tau_1}B, e^{A\tau_2}B) <2.
\end{equation}
Define a subspace of $V\subset \mathbb R^2$ by
\begin{equation}\label{p-2}
  V \triangleq \{e^{A\tau_1}B\alpha_1 + e^{A\tau_2}B\alpha_2\;: \;\; \alpha_1,\alpha_2 \in \mathbb R^1 \}.
\end{equation}
 By (\ref{p-1}) and (\ref{p-2}), there is  $\hat\alpha \in \mathbb R^2\setminus\{0\}$ so that
\begin{equation}\label{p-3}
  \hat \alpha^\intercal \beta = 0,\;\; \forall\; \beta \in V.
\end{equation}
Define two functions $y_0$ and $\hat{\mathbf z}_T$ over $\Omega$ in the following manner:
\begin{equation}\label{p-4}
 \mathbf y_0(x) \equiv e^{AT}\hat{\mathbf z}_T(x) \;\;\mbox{for all}\;\;x\in \Omega \;\;\mbox{and}\;\;
 \hat{\mathbf z}_T(x) \equiv e^{A^*T}\hat\alpha\;\;\mbox{for all}\;\; x\in\Omega.
\end{equation}
We claim that there exists $\delta(T) >0$ so that for any $\{\mathbf{u}_k\}_{k=1}^2\subset L^2(\Omega; \mathbb{R}^1)$,
\begin{equation}\label{p-7}
  \|\mathbf{y}(T; \mathbf{y}_0, \{\tau_k\}_{k=1}^2, \{\mathbf{u}_k\}_{k=1}^2)\| \geq \delta(T).
\end{equation}
To this end, we arbitrarily fix $\{\mathbf{u}_k\}_{k=1}^2\subset L^2(\Omega; \mathbb{R}^1)$.
By (\ref{p-4}) and the second equality in Proposition \ref{lm-5}, we get that
\begin{eqnarray}\label{p-5}
  \big<\hat{\mathbf z}_T, \sum_{k=1}^2e^{\mathcal A(T-\tau_k)}B\chi_{\omega}\mathbf{u}_k \big>
   &=& \sum_{k=1}^2\int_{\Omega}\big<e^{A^*T}\hat\alpha, e^{\mathbf{\Delta}(T-\tau_k)} e^{-A(T-\tau_k)}B\chi_{\omega}\mathbf{u}_k(x)  \big>_{\mathbb R^1}dx
\nonumber\\
   &=& \sum_{k=1}^2\int_{\Omega}\big<\hat\alpha, e^{AT}e^{\mathbf{\Delta}(T-\tau_k)} e^{-AT}e^{A\tau_k}B\chi_{\omega}u_k(x)  \big>_{\mathbb R^1}dx  .
\end{eqnarray}
Meanwhile, by (\ref{p-2}), we see that when $k\in\{1,2\}$,
\begin{eqnarray*}
e^{-AT}e^{A\tau_k}B\chi_{\omega}\mathbf{u}_k(x) \in e^{-AT}(V)\;\;\mbox{for a.e.}\;\;x\in\Omega.
\end{eqnarray*}
From the above, we can apply
 (i) of Proposition \ref{lm-6} (where
$\omega_1=\Omega, \tau=T-\tau_k$ and $\mathbf{z}=e^{-AT}e^{A\tau_k}B\chi_{\omega}\mathbf{u}_k$) to obtain that
\begin{equation}\label{wanggs5.19}
   e^{\mathbf{\Delta}(T-\tau_k)}e^{-AT}e^{A\tau_k}B\chi_{\omega}(x)\mathbf{u}_k(x) \in e^{-AT}(V)\;\;\mbox{for a.e.}\;\;x\in\Omega.
\end{equation}
Now, by (\ref{wanggs5.19}), (\ref{p-5}) and (\ref{p-3}), it follows that
\begin{equation}\label{p-5-1}
  \big<\hat{\mathbf z}_T, \sum_{k=1}^2e^{\mathcal A(T-\tau_k)}B\chi_{\omega}\mathbf{u}_k \big>=0.
\end{equation}
Then,  from (\ref{solution-formula}), (\ref{p-5-1}), (\ref{p-4}) and the second equality in  Proposition \ref{lm-5}, we see that
\begin{eqnarray}\label{p8}
&&\big<\hat{\mathbf z}_T, \mathbf{y}(T; \mathbf{y}_0, \{\tau_k\}_{k=1}^2, \{\mathbf{u}_k\}_{k=1}^2) \big>
= \big<\hat{\mathbf z}_T, e^{\mathcal AT}\mathbf{y}_0 \big>
+\big<\hat{\mathbf z}_T, \sum_{k=1}^2e^{\mathcal A(T-\tau_k)}B\chi_{\omega}\mathbf{u}_k \big>
\nonumber\\
&=& \int_{\Omega}\big<\hat{\mathbf z}_T(x), e^{\mathbf{\Delta}T}e^{-AT}\mathbf{y}_0(x) \big>_{\mathbb R^1}dx
= \int_{\Omega}\big<\hat{\mathbf z}_T(x), e^{\mathbf{\Delta}T}\hat{\mathbf z}_T(x)\big>_{\mathbb R^1}dx
\nonumber\\
&=& \|e^{\mathbf{\Delta}\frac{T}{2}}\hat{\mathbf z}_T\|^2.
\end{eqnarray}
On the other hand, it follows by the Cauchy-Schwarz inequality that
\begin{eqnarray*}
  \big<\hat{\mathbf z}_T, \mathbf{y}(T; \mathbf{y}_0, \{\tau_k\}_{k=1}^2, \{\mathbf{u}_k\}_{k=1}^2) \big>
  \leq \|\hat{\mathbf z}_T\|\|\mathbf{y}(T; \mathbf{y}_0, \{\tau_k\}_{k=1}^2, \{\mathbf{u}_k\}_{k=1}^2)\|.
\end{eqnarray*}
This, along with (\ref{p8}), yields that
\begin{eqnarray*}
 \|\mathbf{y}(T; \mathbf{y}_0, \{\tau_k\}_{k=1}^2, \{\mathbf{u}_k\}_{k=1}^2)\|
 \geq \frac{\|e^{\mathbf{\Delta}\frac{T}{2}}\hat{\mathbf z}_T\|^2}{\|\hat{\mathbf z}_T\|},
\end{eqnarray*}
which leads to (\ref{p-7}), since $\hat{\mathbf z}_T\neq 0$ only depends on $T$ (see (\ref{p-4})).

Finally, from  (\ref{p-7}), we see that the approximate controllability for the system (\ref{heat-eq-ip}) (governed by this pair $(A,B)$) over $[0,T]$ cannot be realized at any $\{\tau_k\}_{k=1}^2\subset (0,T)$ with $\tau_2-\tau_1=d_A$.

Besides, this example also shows that for each $T>0$, the approximate controllability for the system (\ref{heat-eq-ip}) (governed by this pair $(A,B)$) over $[0,T]$ cannot be realized at a single control instant $\tau\in (0,T)$. Let us explain the reason . Let $T>0$. Since  $\mbox{rank}\;(e^{A\tau}B) <2$ for all $\tau\in (0,T)$ (see Example \ref{e-1}), it follows from Theorem~\ref{lm-4} that there is $\hat {\mathbf{z}}\in L^2(\Omega; \mathbb{R}^2)\setminus\{0\}$ so that
\begin{equation}\label{wang5.22}
  \chi_{\omega}B^*e^{\mathcal A^*\tau} \hat {\mathbf{z}}= 0 \;\; \mbox{for all}\;\; \tau\in(0,T).
\end{equation}
On the other hand, one can easily check that the approximate controllability for the system (\ref{heat-eq-ip}) over $[0,T]$ can be realized at a single control instant $\tau\in (0,T)$
if and only if
\begin{equation*}
 \mathbf{z}\in L^2(\Omega;\mathbb{R}^n)\;\;\mbox{and}\;\; \chi_{\omega}B^*e^{\mathcal A^*(T-\tau)}  {\mathbf{z}}= 0\Longrightarrow \mathbf{z}=0.
\end{equation*}
This, along with (\ref{wang5.22}), yields that   the approximate controllability for the system (\ref{heat-eq-ip}) (governed by this pair $(A,B)$) over $[0,T]$ cannot be realized at a single control instant $\tau\in (0,T)$.

\end{example}

\end{document}